\documentclass[12pt,reqno]{amsart}
\usepackage{amsmath,amssymb}
\usepackage{hyperref}
\usepackage[english]{babel}
\usepackage{caption}
\usepackage{graphicx}
\usepackage{float}
\usepackage{pgf,tikz}
\numberwithin{equation}{section}
\tikzstyle{vertex}=[circle,draw, inner sep=0pt, minimum size=1.5pt]

%\usepackage{fullpage}
%\usepackage{color,soul}
%\usepackage{epsfig}  		% For postscript
%\usepackage{epic,eepic}       % For epic and eepic output from xfig
%\usepackage[utf8]{inputenc}
%\usepackage{pgf,tikz}
%\usepackage{mathrsfs}
%\usetikzlibrary{arrows}
%\pagestyle{empty}
%\usepackage[backref=page]{hyperref}

%\usepackage[hypertex]{hyperref}

%\usepackage{graphicx}
%\usepackage{mathtools}
%\usepackage{amsmath}
%\usepackage{enumerate}
%\usepackage{amsthm}
%\usepackage{amscd}
%\usepackage{amsfonts}
%\usepackage{fancyhdr}

%\usepackage{epsfig}  		% For postscript
%\usepackage{epic,eepic}

%\usepackage{hyperref}

%\usepackage{xcolor}
%\usepackage{lineno}\linenumbers

%\usepackage{mathrsfs}
%\usetikzlibrary{arrows}

%\DeclarePairedDelimiter{\floor}{\lfloor}{\rfloor}
%\DeclarePairedDelimiter{\ceil}{\lceil}{\rceil}

\newtheorem{thm}{Theorem}[section]

\newtheorem{lem}[thm]{Lemma}

\newtheorem{cor}[thm]{Corollary}

\theoremstyle{definition}
\newtheorem{definition}[thm]{Definition}
\newtheorem{example}[thm]{Example}
\newtheorem{conjecture}[thm]{Conjecture}

\newtheorem{remark}[thm]{Remark}
\newtheorem{claim}[thm]{Claim}

\newtheorem{obs}[thm]{Observation}

  % The real numbers.

\newcommand{\C}{\mathcal{C}}
\newcommand \reg{\operatorname{reg}}

\newcommand{\pol}{\operatorname{pol}}
\newcommand{\even}{\operatorname{even}}

\setlength{\textheight}{23cm}
\setlength{\textwidth}{16cm}
\setlength{\topmargin}{-0.8cm}
\setlength{\parskip}{0.3\baselineskip}
\hoffset=-1.4cm

\begin{document}
 \title{Regularity of Powers of edge ideals of Unicyclic Graphs }

 \author[Ali Alilooee]{Ali Alilooee}
\address{University of Wisconsin-Stout, Department of Mathematics and Statistics,
Jarvis Hall-Science Wing,
Menomonie, WI, USA }
\email{a20480m2018@gmail.com}
%\urladdr{http://www.aliloee.ir/}

\author{Selvi Kara}
\address{University of South Alabama, Department of Mathematics and Statistics, 411 University Boulevard North, Mobile, AL 36688-0002, USA}
\email{selvi@southalabama.edu}

\author{S. Selvaraja}
\address{ Institute of Mathematical Sciences, C. I. T. Campus, Chennai 600 113, INDIA}
\email{selva.y2s@gmail.com}

\keywords{Regularity, Edge ideal, Unicyclic graph, Asymptotic linearity of regularity, Monomial ideal}

\thanks{AMS Classification 2010: 05C25, 05C38, 05E40, 13D02, 13F20}

%\thanks{The third author is supported by the National Board for Higher Mathematics, India.}

\maketitle
 \begin{abstract}
Let $G$ be a finite simple graph and $I(G)$ denote the corresponding
edge ideal. In this paper we prove that if $G$ is a unicyclic graph then
for all $s \geq 1$ the regularity of $I(G)^s$ is exactly $2s+\reg(I(G))-2$.
We also give a combinatorial characterization of unicyclic graphs with regularity
$\nu(G)+1$ and $\nu(G)+2$ where
$\nu(G)$ denotes the induced matching number of $G$.
 \end{abstract}

 \section{Introduction}
 
 Let $G = (V(G) , E(G))$ denote a finite simple (no loops, no multiple edges) undirected graph with vertices
$V(G) = \{x_1,\ldots,x_n\}$ and edge set $E(G)$. By identifying the vertices with the variables in the polynomial ring
$R=K[x_1,\ldots, x_n]$ where $K$ is a field, we can associate
each graph $G$ to a monomial ideal $I(G)$ generated by the set
$\{x_i x_j \mid \{x_i , x_j \}  \in E (G) \}$.
The ideal $I(G)$ is called the \textit{edge ideal} of $G$. Recently, building a dictionary between combinatorial data of graphs and the algebraic
properties of the corresponding edge ideals has been studied by various authors, (cf. \cite{BHT}, \cite{BC2}, \cite{Frob}, \cite{Ha2}, \cite{HVanT},
\cite{Jacques}, \cite{JNS}, \cite{Katzman},
\cite{MSY}, \cite{Morey}, \cite{Wood}, \cite{Zheng}).
In particular, establishing a relationship between Castelnuovo-Mumford regularity of the
edge ideals and
combinatorial invariants associated with graphs such as induced matching number, matching number and co-chordal cover number is an
active research topic, (cf.
\cite{HVanT}, \cite{Katzman}, \cite{Wood}).

Our motivation to study regularity of powers of edge ideals springs from a famous result: for a homogeneous ideal $I$ in a polynomial ring,
$\reg (I^s)$ is asymptotically a linear function for
$s \gg 0,$ (cf.  \cite{Chardin}, \cite{CHT}, \cite{Kodi}, \cite{TW}),
i.e., there exist non-negative integers $a$, $b$, $s_0$ such that
$$\reg(I^s)=as+b \text{ for all $s \geq s_0$}.$$
While the coefficient $a$ is well-understood (\cite{CHT}, \cite{Kodi}, \cite{TW}), the constants $b$ and $s_0$ are
quite mysterious.
In this regard, there has been an interest in finding the exact form of the linear function and determining the stabilization index $s_0$
where $\reg(I^s)$ becomes linear (cf. \cite{Berle}, \cite{Chardin2}, \cite{EH}, \cite{EU}, \cite{Ha}).
It turns out that even in the case of monomial ideals it is challenging to find the linear function and $s_0$ (cf. \cite{Conca}, \cite{Ha2}).
In this paper, we consider $I = I(G)$, the edge ideal of $G$. In this case, there exist integers $b$ and $s_0$ such that
$\reg(I^s) = 2s + b$ for all $s\geq s_0$. 
Our objective in this paper is to find $b$ and
$s_0$ in terms of combinatorial invariants of the graph $G$ when $G$ is a \textit{unicyclic graph}, i.e. a graph containing exactly one cycle. 
There are few classes of graphs for which $b$ and $s_0$ are explicitly computed
(see, for example, \cite{AB}, \cite{Banerjee}, \cite{BHT}, \cite{Frob}, 
 \cite{JNS}, \cite{MSY}).

In \cite{BHT}, Kara, H\`{a} and Trung
proved that $2s+\nu(G)-1 \leq \reg(I(G))^s$ for $s\geq 1 $ and any graph $G$ where $\nu(G)$ denote the induced matching number of
$G$.
They also proved that the equality holds for
$s \geq 1$ when $G$ is a forest and for  $s \geq 2$ when $G$ is a cycle. 
A natural class of graphs to
consider next is unicyclic graphs. The regularity of edge ideal of a unicyclic graph is investigated in 
\cite{BC2} and the depth of powers of edge ideal of a unicyclic graph have been studied in \cite{trung}.
Throughout the paper, we  shall restrict our attention to unicyclic graphs which are connected and
not cycles.

We then compute the regularity of powers of edge ideals of unicyclic graphs. The main result of the paper is the following.

\begin{thm}\textup{(Theorem \ref{main}.)}\label{second}
 If $G$ is a unicyclic graph, then for all $s \geq 1$,
$$\reg(I(G)^s)=2s+\reg(I(G))-2.$$
\end{thm}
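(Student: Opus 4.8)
The plan is to prove the identity by separately establishing the lower bound $\reg(I(G)^s)\ge 2s+\reg(I(G))-2$ and the matching upper bound, arguing by induction on $s$ for the latter. Throughout write $c:=\reg(I(G))$ and $\nu:=\nu(G)$, so that $c\in\{\nu+1,\nu+2\}$ by the bounds recalled above. The whole argument is organized around the value of $c$: when $c=\nu+1$ the asserted formula reads $2s+\nu-1$, which is already the universal lower bound of Beyarslan, H\`{a} and Trung, so only the upper bound needs proof; when $c=\nu+2$ one needs an extra $+1$ in the lower bound, and this is exactly where the characterization in Theorem~\ref{first} enters.

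For the lower bound in the case $c=\nu+2$ I would invoke Theorem~\ref{first}: the cycle length satisfies $n\equiv 2\ (\mathrm{mod}\ 3)$ and $\nu(G\setminus\Gamma(G))=\nu$. The latter equality produces a maximum induced matching $N$ of $G$, of size $\nu$, all of whose edges avoid $\Gamma(G)$; by the definition of $\Gamma(G)$ these edges are neither incident nor adjacent to the cycle $C_n$, and since the unique cycle of a unicyclic graph is induced, $H:=C_n\sqcup N$ is an \emph{induced} subgraph of $G$ that is a disjoint union. Since $\reg(I(C_n)^a)=2a+\nu(C_n)$ for $n\equiv 2\ (\mathrm{mod}\ 3)$ (by the cycle case of Beyarslan, H\`{a} and Trung together with $\reg(I(C_n))=\nu(C_n)+2$) and $\reg(I(e)^a)=2a$ for a single edge $e$, the known description of the regularity of powers of an edge ideal of a disjoint union in terms of the components gives, after a short computation, $\reg(I(H)^s)=2s+\nu$. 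Monotonicity of the regularity of powers under passage to induced subgraphs then yields $\reg(I(G)^s)\ge\reg(I(H)^s)=2s+\nu=2s+c-2$, as required.

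For the upper bound I would run the Banerjee induction. The base case $s=1$ is the identity $\reg(I(G))=c$. Assuming $\reg(I(G)^s)\le 2s+c-2$, Banerjee's inequality
$$\reg\!\big(I(G)^{s+1}\big)\ \le\ \max\Big\{\,\reg\big(I(G)^s\big),\ \max_{M}\ \reg\big(I(G)^{s+1}:M\big)+2s\,\Big\},$$
where $M$ ranges over the minimal monomial generators of $I(G)^s$, reduces everything to the single estimate
$$\reg\big(I(G)^{s+1}:M\big)\ \le\ c\qquad(\star)$$
because the first term is $\le 2s+c-2<2(s+1)+c-2$. By Banerjee's structure theorem the colon ideal $(I(G)^{s+1}:M)$ is, up to polarization (which preserves regularity), the edge ideal of the even-connection graph $G'_M$; this graph contains $G$ and is enlarged by the edges joining vertices even-connected with respect to $M$, together with the whiskers created at the vertices even-connected to themselves. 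Thus $(\star)$ becomes the purely combinatorial claim $\reg(I(G'_M))\le\reg(I(G))$.

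Establishing this claim is the main obstacle. The difficulty is that even-connection may add chords across the cycle $C_n$, so $G'_M$ is in general \emph{not} unicyclic and the earlier characterizations cannot be applied to it directly, while polarization additionally decorates $G'_M$ with whiskers. My plan is to bound $\reg(I(G'_M))$ by repeatedly applying the recursion $\reg(J)\le\max\{\reg(J:x)+1,\ \reg(J+(x))\}$ to strip the whisker vertices and the vertices in the neighborhood of the cycle, peeling $G'_M$ into a disjoint union of forests and at most one cycle-type piece, whose regularities are controlled by the forest and cycle cases of Beyarslan, H\`{a} and Trung and by induced-matching bounds. The bookkeeping then has to confirm that the extra even-connection edges never push the induced matching number above $\nu$ in the case $c=\nu+1$, and that in the case $c=\nu+2$ the surviving cycle piece contributes exactly the extra $+1$, so that the total never exceeds $c$. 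Carrying out this vertex-peeling uniformly over all generators $M$ — in particular controlling how even-connection interacts with the trees hanging off the cycle — is the technical heart of the proof.
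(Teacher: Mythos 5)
The fatal gap is in your upper bound. You reduce it, via Banerjee's inequality, to the claim $(\star)$: $\reg(I(G)^{s+1}:M)\le \reg(I(G))$ for \emph{every} minimal monomial generator $M$ of $I(G)^s$, and then you explicitly leave $(\star)$ as a plan (``the bookkeeping then has to confirm\dots'', ``the technical heart of the proof''). That plan is not carried out, and it is precisely the hard part: when $M$ involves tree edges, even-connections can run through the attached trees and create new edges and new cycles far away from $C_n$ (in the paper's example after Lemma \ref{EvenUnion}, $M=x_1x_5$ even-connects the two tree leaves $y_2$ and $y_7$), so your proposed ``peeling into forests and at most one cycle-type piece'' is not routine, and no uniform control over all $M$ is given. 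Note that the paper never proves $(\star)$ either; Lemma \ref{big} takes a different route that avoids coloning by generators of $I(G)^s$ altogether. Since the forest edges $f_1,\dots,f_k$ can be ordered as successive leaves, Morey's lemma gives $(I(G)^s:f_1)=I(G)^{s-1}$ and $(I(G)^s,f_1,\dots,f_i)=(I(G_i)^s,f_1,\dots,f_i)$, so iterated short exact sequences reduce everything to $\reg(I(C_n)^{s+1},f_1,\dots,f_k)$; only there does a Banerjee-type induction appear, with colon ideals taken by generators $m$ of $I(C_n)^s$, so that all even-connections are confined to the cycle and paths inside it (Lemmas \ref{EvenUnion}--\ref{EvenCycle2} and Theorem \ref{cycle2}). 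Without either proving $(\star)$ or switching to such a reduction, the upper bound, and hence the theorem, is unproven.

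Your lower bound has two errors, both repairable, but one of them is conceptually serious. First, a maximum induced matching of $G$ avoiding $\Gamma(G)$ is \emph{not} disjoint from the cycle: since $\nu(G\setminus\Gamma(G))=\nu(C_n)+\sum_j\nu(H_j)$, any such matching contains exactly $\nu(C_n)\ge 1$ edges lying on $C_n$, so your $H=C_n\sqcup N$ is not a disjoint union as written; you must keep only the forest part $N'$ of the matching (of size $\nu(G)-\nu(C_n)$, which is positive because $G$ is not a cycle) and take $H=C_n\coprod N'$. Second, the formula you cite, $\reg(I(C_n)^a)=2a+\nu(C_n)$ for $n\equiv 2 \pmod 3$, is false for $a\ge 2$: Beyarslan--H\`a--Trung prove $\reg(I(C_n)^a)=2a+\nu(C_n)-1$ for all $a\ge 2$ regardless of $n\bmod 3$ --- this drop is exactly why Theorem \ref{main} fails for cycles themselves and why the paper excludes them. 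The conclusion $\reg(I(H)^s)=2s+\nu(G)$ is nevertheless true, because in the disjoint-union formula (\cite[Proposition 2.7]{HTT}, \cite[Theorem 5.7]{nguyen_vu}) the maximum over $i+j=s+1$ is attained at $i=1$: the cycle contributes its excess regularity $\nu(C_n)+2$ only in its first power, while the nonempty forest component carries the $s$-th power. This is exactly the computation in the Claim inside the paper's proof of Theorem \ref{main} (with $H=G\setminus\Gamma(G)$ in place of your $C_n\coprod N'$); once corrected, your lower-bound argument is essentially the paper's.
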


Note that for this class of graphs, we have $b=\reg(I(G))-2$ and $s_0=1$. As an immediate consequence,
we derive one of the main results of \cite{MSY}, that the above equality holds for whiskered cycle graphs.

To prove Theorem \ref{second}, we establish the upper bound $\reg (I(G)^s) \leq 2s+\reg(G)-2$ for all $s\geq1$
when $G$ is a unicyclic graph (Lemma \ref{big}). This upper bound coupled with the lower bound given in \cite[Theorem 4.5]{BHT} leads us to the following.
\begin{equation*}
2s+\nu(G)-1 \leq \reg(I(G)^s) \leq 2s+\reg(I(G))-2.
\end{equation*}
It follows from the above inequalities that $\reg (I(G)^s)= 2s+\nu(G)-1$ for all $s \geq 1$ when $\reg(I(G))= \nu(G)+1.$
In the case where  $\reg(I(G))=\nu(G)+2,$ we present an induced subgraph of $G,$ say $H,$ such that $\reg(I(H)^s)= 2s+\nu(G).$
Thus by making use of \cite[Corollary 4.3]{BHT} and the upper bound, we prove that $\reg (I(G)^s)=2s+\nu(G)$ for all $s \geq 1.$

The first key step in the proof of the main result is to compute $\reg (I(G))$ for a unicyclic graph $G$ and the results obtained in this step are of independent interest. It is known that for any unicyclic graph $G$,
\begin{equation*}
\nu(G) + 1 \leq \reg(I(G)) \leq \nu(G)+2.
\end{equation*}
The lower bound was proved by Katzman, \cite{Katzman} and the upper bound
was proved by B{\i}y{\i}ko{\u{g}}lu and
Civan, \cite{BC2}. In this paper, we provide the complete combinatorial characterization of unicyclic graphs where the regularity is  $\nu(G)+1$ and $\nu(G)+2.$ 

In the pursuit of the desired characterization, we make use of an important yet a basic observation related to the structure of unicyclic graphs: a unicyclic graph is obtained from a cycle by attaching trees to some of the vertices of the cycle. We then call those vertices of the cycle as roots and introduce the notation $\Gamma(G)$ to denote the neighbors of roots which are not on the cycle.

Our first result in this context gives the characterization of unicyclic graphs when $\reg(I(G))=\nu(G)+2.$  

\begin{thm}\textup{(Corollary \ref{unireg1}.)} \label{first}
Let $G$ be a unicyclic graph with cycle $C_n.$ Then $\reg(I(G))=\nu(G)+2$ if and only if $n \equiv 2 ~( mod~3)$ and $\nu(G \setminus \Gamma(G))=\nu(G)$ where $G \setminus \Gamma(G)$ is the induced subgraph of $G$ on $V(G) \setminus \Gamma(G).$
\end{thm}

In order to prove Theorem \ref{first}, we provide necessary conditions for a unicyclic graph to have regularity
$\nu(G)+1$ (Lemma \ref{unireg_1} and Theorem \ref{unireg_2}) and $\nu(G)+2$ (Theorem \ref{regupper}).
The characterization of unicyclic graphs with regularity $\nu(G)+1$ (Corollary \ref{unireg2}) follows from Theorem \ref{first}.

Our paper is organized as follows. In section \ref{pre}, we collect the necessary notation and terminology that will be used in the paper.
In Section \ref{uni_reg}, we prove Theorem \ref{first}.  Section \ref{reg_path_cycle} is
devoted to finding bounds for regularity of special colon ideals related to paths and cycles. Finally, we prove Theorem \ref{second} in Section \ref{reg_power} by using the main result of Section \ref{reg_path_cycle}.

\section{Preliminaries}\label{pre}

In this section, we set up the basic definitions and terminology needed for the main results.

Let $G$ be a finite simple graph with vertex set $V(G)$
and edge set $E(G)$. For a vertex $x$ in a graph $G$, let $N_G(x)=\{y \in V(G) \mid \{x,y\} \in E(G)\}$
be the set of neighbors of $x$ and set
$N_G[x]=N_G(x) \cup \{x\}.$ An edge $e$ is \textit{incident} to a vertex $x$ if $x \in e.$ If $e=\{x,y\}$ then set $N_G[e]= N_G[x]\cup N_G[y].$ We often use $xy \in E$ instead of $\{x,y\} \in E(G).$ By abusing notation, we use the notation $xy$ to refer to both the edge $xy \in E(G)$ and the monomial $xy \in I(G).$

The \textit{degree} of a vertex $x \in V(G),$ denoted by $\deg_G(x),$ is the number of edges incident to $x.$ If $\deg_G (x)=1,$
then $x$ is called a \textit{leaf} of $G.$ If $x$ is a leaf and $N_G(x)=\{y\},$
then we also call the edge $e= \{x,y\}$ a \textit{leaf (also called whisker)} of $G.$
Let $C_n$ denote the cycle on $n$ vertices and $P_n$ denote the path on $n$ vertices. The length of a path, or a cycle is its number of edges.  

Let $e \in E(G)$, then define $G \setminus e$ to be the subgraph of $G$ obtained from $G$ by deleting the edge $e$ but keeping
its vertices. If $W \subseteq V(G)$ in $G,$ then $G \setminus W$ denotes the subgraph of $G$ with the vertices in $W$ and all incident
edges deleted. When $W= \{x\}$ consists of a single vertex, we shall write $G \setminus x$ instead of $G \setminus \{x\}.$

A graph $H$ is called an \textit{induced subgraph} of $G$ if the vertices of $H$ are the vertices of $G,$ and for the vertices $x$ and $y$ in $H,$
$\{x,y\}$ is an edge in $H$ if and only if $\{x,y\}$ is an edge in $G.$ The induced subgraph of $G$ over a subset $W \subseteq V(G)$
is obtained by deleting all the vertices that are not in $W$ from $G.$

 Let $G$ and $H$ be graphs. Their union, denoted by $G \cup H$,  is a graph with the vertex set
 $V(G)\cup V(H)$ and edge set  $E(G)\cup E(H).$ If $G$ and $H$ disjoint graphs (i.e., $V(G) \cap V(H) =\emptyset$),
 we denote the disjoint union of $G$ and $H$ by $G \coprod H.$

 A \textit{matching} in a graph $G$ is a collection of pairwise disjoint edges $\{e_1, \ldots, e_s\}$.
We call a collection of edges $\{e_1, \ldots, e_s\}$ an \textit{induced matching}
if they form a matching in $G,$ and they are exactly the edges of the induced subgraph of $G$ over the vertices
$\bigcup_{i=1}^n e_i.$ The largest size of an induced matching in $G$ is called its \textit{induced matching number} and denoted by $\nu(G).$
Note that if $H$ is an induced subgraph of $G,$ then $\nu(H) \leq \nu(G).$ Furthermore, if $G$ and $H$ are disjoint graphs,
then $\nu(G \coprod H) =\nu(G)+\nu(H).$

\begin{example}\label{Example}
Let G be a graph with $V(G)=\{x_1,\ldots,x_7\}$.

\captionsetup[figure]{}
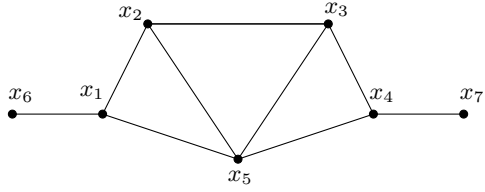
\begin{figure}[H]

\begin{tikzpicture}[scale=.6]
%\clip(-0.34,-5.64) rectangle (21.78,6.56);
\draw (6.,6.)-- (10.,6.);
\draw (6.,6.)-- (5.,4.);
\draw (3.,4.)-- (5.,4.);
\draw (5.,4.)-- (8.,3.);
\draw (10.,6.)-- (11.,4.);
\draw (11.,4.)-- (13.,4.);
\draw (11.,4.)-- (8.,3.);
\draw (10.,6.)-- (8.,3.);
\draw (6.,6.)-- (8.,3.);
\draw (6.,6.)-- (10.,6.);
\begin{scriptsize}
\draw [fill=black] (6.,6.) circle (2.5pt);
\draw[color=black] (5.63,6.24) node {$x_2$};
\draw [fill=black] (10.,6.) circle (2.5pt);
\draw[color=black] (10.19,6.32) node {$x_3$};
\draw [fill=black] (5.,4.) circle (2.5pt);
\draw[color=black] (4.77,4.48) node {$x_1$};
\draw [fill=black] (3.,4.) circle (2.5pt);
\draw[color=black] (3.19,4.42) node {$x_6$};
\draw [fill=black] (8.,3.) circle (2.5pt);
\draw[color=black] (8.05,2.6) node {$x_5$};
\draw [fill=black] (11.,4.) circle (2.5pt);
\draw[color=black] (11.19,4.42) node {$x_4$};
\draw [fill=black] (13.,4.) circle (2.5pt);
\draw[color=black] (13.19,4.42) node {$x_7$};
\end{scriptsize}
\end{tikzpicture}
\caption{A finite simple graph}

\end{figure}
\noindent
Then $\{x_1x_6,x_2x_3,x_4x_7\}$ forms a matching,
but not an induced matching (the induced subgraph on $\{x_1,x_2,x_3,x_4,x_6,x_7\}$ also contains edges
$\{x_1x_2,x_3x_4\}$). The induced
matching number $\nu(G)$ is 2.

\end{example}

The following observation will be used repeatedly in our proofs.
\begin{obs}\label{indmatch} Let $G$ be a graph with a leaf $u$ and its unique neighbor $v,$ say $e=\{u,v\}.$
If $\{e_1, \ldots, e_s\}$ is an induced matching in $G \setminus N_G [v],$ then $\{e_1, \ldots, e_s,e\} $ is an induced matching in $G.$
Therefore, $\nu(G\setminus N_G [v]) +1 \leq \nu(G).$
\end{obs}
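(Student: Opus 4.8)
The plan is to verify directly the two defining conditions of an induced matching for the enlarged collection $\{e_1, \ldots, e_s, e\}$, and then obtain the inequality by specializing to a maximum induced matching. Throughout, write $W = \bigcup_{i=1}^s e_i$ for the vertex set covered by the given matching, so that $W \subseteq V(G \setminus N_G[v]) = V(G) \setminus N_G[v]$.

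First I would check that $\{e_1, \ldots, e_s, e\}$ is a matching in $G$. The edges $e_1, \ldots, e_s$ are pairwise disjoint by hypothesis, so it remains to see that $e = \{u, v\}$ meets none of them. This is immediate: $u, v \in N_G[v]$ while $W$ is disjoint from $N_G[v]$, hence $e \cap e_i = \emptyset$ for every $i$.

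Next, and this is the only step requiring care, I would show that the induced subgraph of $G$ on $W \cup \{u, v\}$ has no edges beyond $e_1, \ldots, e_s, e$. There are three kinds of potential edges to rule out. Edges with both endpoints in $W$ are exactly $e_1, \ldots, e_s$: because $W$ avoids $N_G[v]$, the induced subgraph of $G$ on $W$ coincides with the induced subgraph of $G \setminus N_G[v]$ on $W$, and the latter has precisely these edges since $\{e_1,\ldots,e_s\}$ is an induced matching there. No edge joins $u$ to $W$, since $u$ is a leaf whose unique neighbor $v$ does not lie in $W$. Finally, no edge joins $v$ to $W$, since $N_G(v) \subseteq N_G[v]$ is disjoint from $W$. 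Together with the edge $e$ itself, this shows the induced subgraph on $W \cup \{u, v\}$ has exactly the claimed edge set, so $\{e_1,\ldots,e_s,e\}$ is an induced matching in $G$.

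The inequality then follows by choosing $\{e_1, \ldots, e_s\}$ to be a maximum induced matching of $G \setminus N_G[v]$, so that $s = \nu(G \setminus N_G[v])$; the construction produces an induced matching of size $s+1$ in $G$, whence $\nu(G \setminus N_G[v]) + 1 = s + 1 \leq \nu(G)$. I do not expect a genuine obstacle here: the whole argument hinges on the disjointness $W \cap N_G[v] = \emptyset$, which simultaneously secures the matching condition and prevents any spurious edge from $u$ or $v$ reaching into $W$.
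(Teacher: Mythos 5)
Your proof is correct. The paper states this result as an Observation with no proof at all, so your argument simply supplies the routine verification the authors leave implicit; your three-way case analysis of potential edges (within $W$, from $u$ to $W$, from $v$ to $W$), all hinging on the disjointness $W \cap N_G[v] = \emptyset$, is exactly the intended justification, and the passage to the inequality via a maximum induced matching is handled properly.
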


\begin{definition} Let $R$ be a standard graded polynomial ring over a field $K.$ 
The \textit{Castelnuovo-Mumford regularity}
(or  regularity) of a finitely generated graded $R$ module $M,$ written $\reg(M)$ is given by
$$\reg (M) := \max \{j-i \mid \textrm{Tor}_i(M,K)_j \neq 0\} .$$

\end{definition}

When discussing the regularity of edge ideals, for simplicity of notation, we shall use $\reg(G)$ to also refer to $\reg(I(G))$.

Let $I$ be a non-zero proper homogeneous ideal of $R$. Then it is straight from the definition that $\displaystyle\reg\left(R/I\right)=\reg(I)-1$.

For a homogeneous ideal $I$ in $R$ and any homogeneous element $M \in R$ of degree $d,$ the following short exact sequence is a standard tool in commutative algebra:

\begin{eqnarray}\label{ses}
0 \longrightarrow \frac{R}{I:M} (-d) \xrightarrow{\cdot M} \frac{R}{I} \longrightarrow \frac{R}{I+M} \longrightarrow 0
\end{eqnarray}

By taking the long exact sequence of local cohomology modules associated to \ref{ses}, we have the following useful inequality

\begin{eqnarray}\label{sesreg}
 \reg (I) \leq \max \{ \reg (I:M)+d, \reg (I,M)\}.
\end{eqnarray}

We use the following well-known theorem to prove an upper bound for the regularity of edge ideals inductively:

 \begin{thm}\label{Inequalities}
Let $G=(V(G),E(G))$ be a graph.
\begin{enumerate}
\item \cite[Lemma 3.1]{Ha2} If $H$ is an induced subgraph of $G,$ then $\reg(I(H)) \leq \reg(I(G)).$
\item \cite[Lemma 2.10]{DHS}  Let $x \in V(G).$ Then $$\reg(I(G)) \leq \max\{ \reg(I(G\setminus x)), \reg(I(G\setminus N[x]))+1 \}.$$
%\item Let $e \in E(G).$ Then $$\reg (I(G)) \leq \max\{ \reg(I(G \setminus e)), \reg(I(G_e))+1 \}.$$
\end{enumerate}
\end{thm}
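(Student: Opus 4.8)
The statement splits into a monotonicity part (1) and a recursive bound (2), and I would attack them by different mechanisms. For (2) the plan is to feed the two natural pieces of $I(G)$ attached to a vertex $x$ into the short exact sequence \eqref{ses} and the resulting estimate \eqref{sesreg}, taken with $M=x$ and $d=1$. A direct monomial computation should give the two identities
$$(I(G):x) = (N_G(x)) + I(G\setminus N_G[x]) \qquad\text{and}\qquad (I(G),x) = (x) + I(G\setminus x),$$
where in the first identity the linear generators coming from $N_G(x)$ absorb every edge of $G\setminus x$ meeting $N_G(x)$, leaving only those edges lying entirely outside $N_G[x]$, i.e. the edges of $G\setminus N_G[x]$.

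Next I would compute the two regularities on the right. Since the variables appearing in $N_G(x)$ are disjoint from those occurring in $I(G\setminus N_G[x])$, and $x$ occurs in neither, quotienting out the linear forms $N_G(x)$ and discarding the free variable $x$ leaves $\reg(I(G):x) = \reg(I(G\setminus N_G[x]))$; similarly $\reg(I(G),x) = \reg(I(G\setminus x))$, because $R/\big((x)+I(G\setminus x)\big)\cong K[V(G)\setminus x]/I(G\setminus x)$. Substituting these into \eqref{sesreg} yields precisely
$$\reg(I(G)) \le \max\{\reg(I(G\setminus N_G[x]))+1,\ \reg(I(G\setminus x))\},$$
which is (2). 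The only care needed is the additivity of regularity over disjoint sets of variables (a Künneth computation) together with its invariance under adjoining free variables; given \eqref{ses} and \eqref{sesreg}, this part is essentially formal.

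For (1) I would first reduce to a single vertex deletion: any induced subgraph $G[W]$ arises by deleting the vertices of $V(G)\setminus W$ one at a time, so it suffices to prove $\reg(I(G\setminus x))\le \reg(I(G))$ and iterate. Here, however, I would abandon the exact sequence and instead appeal to Hochster's formula for squarefree monomial ideals. Writing $\Delta=\operatorname{Ind}(G)$ for the independence complex, so that $I(G)$ is its Stanley--Reisner ideal, Hochster's formula expresses each Betti number $\beta_{i,j}(R/I(G))$ as a sum over subsets $W\subseteq V(G)$ of the dimensions $\dim_K \tilde H_\bullet(\Delta|_W)$, and one has the crucial identity $\Delta|_W=\operatorname{Ind}(G[W])$. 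Consequently $\reg(R/I(G))$ equals the largest value $\ell+1$ for which some restriction $\operatorname{Ind}(G[W])$ has nonvanishing $\ell$-th reduced homology. For an induced subgraph $H=G[U]$ the admissible subsets $W$ range only over subsets of $U$, a subcollection of those available for $G$, while the homology terms themselves coincide; hence the maximum can only decrease, giving $\reg(I(H))\le\reg(I(G))$.

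I expect the genuine obstacle to be exactly part (1), and it is instructive to see why the exact sequence alone fails. Combining \eqref{ses} with the colon computation above gives only $\reg(I(G\setminus x)) \le \max\{\reg(I(G)),\ \reg(I(G\setminus N_G[x]))\}$, and since $G\setminus N_G[x]$ is itself an induced subgraph of $G\setminus x$, any attempt to close an induction on the number of vertices through this inequality becomes circular. This is precisely why a genuinely new ingredient---Hochster's formula and the monotonicity of graded Betti numbers under restriction of the independence complex---is required for (1), whereas (2) follows formally from the tools already recorded in \eqref{ses} and \eqref{sesreg}.
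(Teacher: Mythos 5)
Your argument is correct: the computation $(I(G):x)=(N_G(x))+I(G\setminus N_G[x])$, $(I(G),x)=(x)+I(G\setminus x)$ fed into \eqref{sesreg} with $M=x$, $d=1$ yields (2), and the Hochster-formula monotonicity $\beta_{i,j}(R/I(G[W]))\leq \beta_{i,j}(R/I(G))$ via the identity $\operatorname{Ind}(G)|_W=\operatorname{Ind}(G[W])$ yields (1), your closing remark about the circularity of the exact-sequence route for (1) being a fair diagnosis of why a different tool is needed there. The paper offers no proof of its own---the theorem is imported verbatim from \cite{Ha2}---and your two arguments are precisely the standard proofs of Lemma 3.1 and Theorem 3.4 in that source, so there is nothing to reconcile; the only cosmetic caveats are the usual conventions when $I(G\setminus N_G[x])$ or $I(G\setminus x)$ is the zero ideal, and the exact homological indexing $\tilde H_{|W|-i-1}$ in Hochster's formula, neither of which affects the argument.
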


The concept of even-connectedness was introduced by Banerjee in \cite{Banerjee}. This notion has emerged as a fine tool in the inductive process of computing asymptotic 
regularity.
\begin{definition}\label{even_connected} Let $G=(V(G),E(G))$ be a graph. Two vertices $u$ and $v$ ($u$ may be the same as $v$) are
said to be even-connected with respect to an $s$-fold product $e_1\cdots e_s$ where $e_i$'s are edges of $G$, not necessarily distinct,
if there is a path $p_0p_1\cdots p_{2k+1}$, $k\geq 1$ in $G$ such that:
\begin{enumerate}
 \item $p_0=u,p_{2k+1}=v.$
 \item For all $0 \leq l \leq k-1,$ $p_{2l+1}p_{2l+2}=e_i$ for some $i$.
 \item For all $i$, $ \mid\{l \geq 0 \mid p_{2l+1}p_{2l+2}=e_i \}\mid
   ~ \leq  ~ \mid \{j \mid e_j=e_i\} \mid$.
 \item For all $0 \leq r \leq 2k$, $p_rp_{r+1}$ is an edge in $G$.
\end{enumerate}
\end{definition}
\begin{example}
In Example \ref{Example} if we set $e_1=x_1x_5$ and $e_2=x_3x_4$, then we have $x_6$ and $x_7$ are even-connected in $G$ with respect to
$e_1e_2$ since we have the path
$(p_0=x_6)x_1x_5x_3x_4(x_7=p_5)$. Also note that $x_2$ is even-connected to itself with respect to $e_1e_2$ since we have the path $x_2x_1x_5x_4x_3x_2$.
\end{example}

As \ref{sesreg} points out, analyzing the ideal $(I(G)^{s+1}:M)$ for a minimal monomial generator of $I(G)^s$ can be used as an important asset in the computation of asymptotic regularity. In \cite{Banerjee}, it is proved that these ideals are generated in degree two for any graph and the description of the generators of this ideal is given by using the notion of even-connection.

\begin{thm}\label{even_connec_equivalent}\cite[Theorem 6.1 and Theorem 6.7]{Banerjee} Let $G$ be a graph with edge ideal
$I = I(G)$, and let $s \geq 1$ be an integer. Let $M$ be a minimal generator of $I^s$.
Then $(I^{s+1} : M)$ is minimally generated by monomials of degree 2, and $uv$ ($u$ and $v$ may
be the same) is a minimal generator of $(I^{s+1} : M )$ if and only if either $\{u, v\} \in E(G) $ or $u$ and $v$ are even-connected with respect to $M$.
 \end{thm}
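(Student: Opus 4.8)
The plan is to prove the two halves separately: first the \emph{inclusion} that every edge and every even-connected pair yields a degree-two generator of $(I^{s+1}:M)$, and then the harder \emph{converse} that these exhaust the minimal generators and that none of higher degree survives. Throughout write $M=e_1\cdots e_s$ as a product of $s$ edges, so that $\deg M=2s$ and every minimal generator of $I^{s+1}$ has degree $2s+2$.

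For the inclusion the edge case is immediate, since $\{u,v\}\in E(G)$ makes $uv\cdot M$ a product of $s+1$ edges. For an even-connected pair $u,v$ witnessed by a path $p_0p_1\cdots p_{2k+1}$ with $p_0=u$ and $p_{2k+1}=v$, I would use the elementary monomial identity
\[
\prod_{j=0}^{k}\bigl(p_{2j}p_{2j+1}\bigr)=p_0\,p_{2k+1}\cdot\prod_{l=0}^{k-1}\bigl(p_{2l+1}p_{2l+2}\bigr),
\]
both sides being $\prod_{r=0}^{2k+1}p_r$. By condition (4) of Definition~\ref{even_connected} the left-hand side is a product of $k+1$ edges of $G$, hence lies in $I^{k+1}$; by conditions (2) and (3) the edges $p_{2l+1}p_{2l+2}$ for $0\le l\le k-1$ form a sub-multiset of $\{e_1,\dots,e_s\}$, so their product divides $M$. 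Writing $M=\bigl(\prod_l p_{2l+1}p_{2l+2}\bigr)M''$ with $M''\in I^{s-k}$ and multiplying the identity by $M''$ gives $uv\cdot M=\bigl(\prod_{j}p_{2j}p_{2j+1}\bigr)M''\in I^{k+1}\cdot I^{s-k}=I^{s+1}$, so $uv\in(I^{s+1}:M)$.

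For the converse I would first note that the colon ideal has no generator of degree one: a variable $x\in(I^{s+1}:M)$ would force $xM\in I^{s+1}$, which is impossible since $\deg(xM)=2s+1<2s+2$. Now let $w$ be any minimal generator. As $wM\in I^{s+1}$, there are edges $f_1,\dots,f_{s+1}$ of $G$ with $f_1\cdots f_{s+1}\mid w\cdot e_1\cdots e_s$. Deleting the edges common to the multisets $\{f_i\}$ and $\{e_j\}$ leaves edge-disjoint multisets $F'$ and $E'$ with $|F'|=|E'|+1$ whose products still satisfy $\prod F'\mid w\prod E'$; in particular $w$ supplies two extra units of vertex-degree relative to $E'$. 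The combinatorial heart is to convert this surplus into an alternating trail in $G$: starting at a vertex $u$ where the degree of $u$ in $\prod F'$ exceeds its degree in $\prod E'$, one follows an $F'$-edge, then an $E'$-edge, and so on, each step available because the divisibility inequality leaves an unused incident edge, until the trail closes at a second surplus vertex $v$. The resulting walk alternates $F'$-edges and $E'$-edges; since its $E'$-edges play the role of the product edges $p_{2l+1}p_{2l+2}$, are drawn from $\{e_j\}$, and hence respect the multiplicity bound (3), this walk is an even-connection path between $u$ and $v$, while $uv\mid w$.

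The main obstacle is precisely this trail construction: guaranteeing that the alternating walk exists, terminates, and lands on a second $w$-supplied vertex, which is Banerjee's exchange argument and demands careful degree-parity bookkeeping. Granting it, I would conclude by minimality, since the extracted monomial $uv$ already lies in $(I^{s+1}:M)$ by the inclusion direction and divides $w$; hence $w=uv$. This simultaneously shows that every minimal generator has degree two and is of the asserted form, ruling out any generator of degree $\ge 3$.
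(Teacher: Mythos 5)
First, a point of comparison: the paper itself contains no proof of this statement --- it is imported verbatim from Banerjee \cite{Banerjee} (his Theorems 6.1 and 6.7) --- so your attempt can only be judged on its own terms, against the argument it is implicitly reconstructing. The parts you carry out in full are correct: the monomial identity $\prod_{j=0}^{k}\bigl(p_{2j}p_{2j+1}\bigr)=p_0p_{2k+1}\prod_{l=0}^{k-1}\bigl(p_{2l+1}p_{2l+2}\bigr)$ holds with multiplicities, condition (4) places the left side in $I^{k+1}$, condition (3) makes the odd-position edges a sub-multiset of $\{e_1,\dots,e_s\}$ so that $M''\in I^{s-k}$, and hence $uvM\in I^{s+1}$; likewise the observation that the colon ideal has no linear generators, the cancellation reducing to edge-disjoint multisets $F'$, $E'$ with $|F'|=|E'|+1$ and $\prod F'\mid w\prod E'$, and the final appeal to minimality are all sound.

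The converse, however, has a genuine gap, sitting exactly at the step you label ``the main obstacle'' and then grant yourself. You justify every step of the alternating trail by saying ``the divisibility inequality leaves an unused incident edge,'' but that inequality, $\deg_x\bigl(\prod F'\bigr)\le \deg_x(w)+\deg_x\bigl(\prod E'\bigr)$, bounds $F'$-degrees from \emph{above}. It does justify the $F'$-to-$E'$ steps (at a vertex entered by an $F'$-edge, used $F'$-edges outnumber used $E'$-edges, so if the trail is stuck there the vertex has $F'$-surplus and hence divides $w$ --- a legitimate stopping point), but it says nothing about the $E'$-to-$F'$ steps: the trail can enter a vertex $x$ with $\deg_x\bigl(\prod E'\bigr)>\deg_x\bigl(\prod F'\bigr)$ along an $E'$-edge with every $F'$-edge at $x$ exhausted, and since an even-connection must terminate with an edge of $G$, you are not allowed to stop there. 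Escaping this dead end requires an input your write-up never invokes, namely the case split on whether some edge of $G$ divides $w$ (if one does, minimality gives $w$ equal to that edge and you are done; if none does, one must show stuck-after-$E'$ configurations cannot occur, or reroute the trail). Working small examples shows that every such stuck configuration forces some $F'$-edge to divide $w$, and converting that observation into a proof is precisely the exchange/induction bookkeeping that constitutes the actual content of Banerjee's theorems --- it is not a routine verification. A secondary omission of the same kind: when the trail closes at its starting vertex ($u=v$), you need $\deg_u\bigl(\prod F'\bigr)\ge\deg_u\bigl(\prod E'\bigr)+2$ to conclude $u^2\mid w$; this does follow from careful counting, but your sketch never performs it. So the architecture is right and the easy half is complete, but the theorem is not proved.
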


Polarization is a process to obtain a squarefree monomial ideal from a given monomial ideal and it behaves well under regularity.
For details of polarization we refer to \cite{Faridi} and \cite{Herzog'sBook}.
\begin{definition}\label{pol_def}
Let $M=x_1^{a_1}\dots x_n^{a_n}$ be a monomial in  $R=k[x_1,\dots,x_n]$. Then we define the squarefree monomial $P(M)$ ({\it polarization} of $M$) as
$$P(M)=x_{11}\dots x_{1a_1}x_{21}\dots x_{2a_2}\dots x_{n1}\dots x_{na_n}$$ in the polynomial ring $S=k[x_{ij} \mid 1\leq i\leq n,1\leq j\leq a_i]$.
If $I=(M_1,\dots,M_q)$ is an ideal in $R$, then the polarization of $I$, denoted by $I^{\pol}$, is define as $I^{\pol}=(P(M_1),\dots,P(M_q))$.
\end{definition}

\begin{cor} \cite[Corollary 1.6.3.d]{Herzog'sBook} \label{betti} Let $I \subset R$ be a monomial ideal and $I^{\pol}\subset S$ be its polarization. Then $\reg (R/I) =\reg (S/I^{\pol}).$
\end{cor}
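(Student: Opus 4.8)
Since Corollary \ref{betti} is the standard fact that polarization preserves all graded Betti numbers (and is recorded as \cite[Corollary 1.6.3.d]{Herzog'sBook}), the proof I would give is the classical specialization argument, which I outline here in self-contained form. Write $R=k[x_1,\dots,x_n]$ and $S=k[x_{ij}\mid 1\le i\le n,\ 1\le j\le a_i]$ as in Definition \ref{pol_def}, and identify $x_i$ with $x_{i1}$. The plan is to exhibit a sequence of linear forms that is regular on $S/I^{\pol}$ and whose quotient recovers $R/I$, and then to invoke the fact that quotienting a graded module by a linear non-zerodivisor leaves every graded Betti number unchanged.

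First I would set $\theta_{ij}=x_{ij}-x_{i1}$ for all $i$ and all $2\le j\le a_i$, and collect them as $\Theta=(\theta_{ij})$. The surjection $S\to R$ sending every $x_{ij}\mapsto x_i$ carries each polarized generator $P(u)$ back to $u$, has kernel exactly $(\Theta)$, and therefore induces an isomorphism $S/(I^{\pol}+\Theta)\cong R/I$. This first step is a direct computation.

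The main obstacle is to show that $\Theta$ is a regular sequence on $S/I^{\pol}$. I would argue one form at a time. At each stage the relevant quotient is defined by a partially depolarized monomial ideal, and a standard fact is that every associated prime of a monomial ideal is generated by a subset of the variables. A linear form $\theta_{ij}=x_{ij}-x_{i1}$ therefore lies in an associated prime $P$ only when both $x_{ij}\in P$ and $x_{i1}\in P$. Using that in the polarized generators the variables $x_{i1},\dots,x_{i,a_i}$ occur as a prefix, namely $P(u)$ contains $x_{ik}$ exactly when $k$ is at most the $x_i$-exponent of $u$, so that whenever $x_{ij}$ with $j\ge 2$ divides $P(u)$ then $x_{i1}$ divides $P(u)$ as well, one verifies that no associated prime of the relevant monomial ideal contains both $x_{i1}$ and $x_{ij}$. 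Hence each $\theta_{ij}$ avoids every associated prime and is a non-zerodivisor, and carrying this through the successive partial depolarizations, where the same prefix structure persists, shows $\Theta$ is regular. This bookkeeping of associated primes through the successive quotients is the technical core.

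Finally I would conclude homologically. If $M$ is a finitely generated graded $S$-module and $\theta\in S$ is a linear $M$-regular element, then tensoring a minimal graded free resolution of $M$ with $S/(\theta)$ yields a minimal graded free resolution of $M/\theta M$ over $S/(\theta)$, because $\Tor^S_i(M,S/(\theta))=0$ for $i\ge 1$. Consequently $\beta^S_{i,j}(M)=\beta^{S/(\theta)}_{i,j}(M/\theta M)$ for all $i,j$, and in particular the regularity is unchanged. Applying this successively along the regular sequence $\Theta$ to $M=S/I^{\pol}$, and using the isomorphism of the first step, gives $\reg(S/I^{\pol})=\reg(R/I)$, as claimed.
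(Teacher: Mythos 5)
The paper does not prove this statement at all: it is quoted directly from the cited reference (Herzog--Hibi, Corollary 1.6.3.d), and the proof given there is exactly the specialization argument you outline. Your write-up is correct and matches that standard approach --- the differences $x_{ij}-x_{i1}$ form a regular sequence on $S/I^{\pol}$ whose quotient is $R/I$ (the associated-prime bookkeeping you sketch, using the prefix structure of polarized generators, is the technical heart and is sound), and graded Betti numbers, hence regularity, are unchanged upon factoring out a linear nonzerodivisor.
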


%
%\iffalse
%Let $G$ be a graph and let $I(G)$ denote the edge ideal of $G$. Then for
%any $s \geq 1$ and edges $e_1, \ldots, e_s$ of $G$, $I^{pol} =
%(I(G)^{s+1} : e_1\cdots e_s)^{pol}$ is a squarefree quadratic
%monomial ideal, by Theorem \ref{even_connec_equivalent}. Hence there
%exists a graph $G'$ associated to $I^{pol}$.
%\fi
%\begin{example}
%Let $G$ be the graph of the Example \ref{Example} and
%$$I(G)=(x_1x_6,x_1x_2,x_1x_5,x_2x_5,x_2x_3,x_3x_5,x_5x_4,x_4x_3,x_4x_7)\subset K[x_1,\dots,x_7].$$
%Then $I=(I(G)^{2}:x_2x_3)=I(G)+(x_5^2,x_1x_4)$. Therefore,
%$I^{pol}\subset k[x_1,\ldots,x_7,y_1]$ is given by
%$I^{pol}=I(G)+(x_5y_1,x_1x_4)$.
%\end{example}

\section{Regularity of unicyclic graphs} \label{uni_reg}

The regularity of unicyclic graphs is studied  in \cite{BC2} and the authors proved that it is either  $\nu(G)+1$  or  $\nu(G)+2.$ 
 In this section, we provide the combinatorial characterization of unicyclic graphs with regularity $\nu(G)+1$ and regularity $\nu(G)+2$.

 The following theorem by B{\i}y{\i}ko{\u{g}}lu and Civan turns out to be crucial in proving our main results.
   \begin{thm}\cite[Corollary 4.12]{BC2} \label{unicyclicreg} If $G$ is a unicyclic graph, then
 $$\nu(G)+1 \leq \reg(I(G)) \leq \nu(G)+2.$$
 \end{thm}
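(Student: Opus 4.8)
The plan is to prove the two inequalities separately, since the lower bound is a general fact valid for every graph, while only the upper bound exploits the hypothesis that $G$ has a unique cycle.

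For the lower bound I would invoke the principle that the induced matching number controls regularity from below. Choose an induced matching $\{e_1,\dots,e_k\}$ of maximum size $k=\nu(G)$. Its supporting vertices span an induced subgraph $H$ which is a disjoint union of $k$ edges, so Theorem \ref{Inequalities}(1) gives $\reg(I(H)) \le \reg(I(G))$. Since regularity is additive over disjoint unions --- the edge ideal of $G_1 \coprod G_2$ is a sum of ideals in disjoint sets of variables, so $R/I(G_1\coprod G_2)$ is a tensor product over $K$ and $\reg(R/I(G_1\coprod G_2))=\reg(R/I(G_1))+\reg(R/I(G_2))$ --- and a single edge contributes $\reg(R/I(P_2))=1$, I would compute $\reg(R/I(H))=k$, i.e. $\reg(I(H))=k+1$. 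Combining yields $\reg(I(G)) \ge \nu(G)+1$, which is precisely Katzman's bound \cite{Katzman}.

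The upper bound is where unicyclicity enters. The key observation is that if $x$ is any vertex lying on the cycle $C_n$, then $G \setminus x$ contains no cycle and is therefore a forest; consequently $G \setminus N[x]$, being an induced subgraph of $G\setminus x$, is also a forest. I would then apply the deletion inequality Theorem \ref{Inequalities}(2) at such an $x$:
\[
\reg(I(G)) \le \max\{\reg(I(G\setminus x)),\ \reg(I(G\setminus N[x]))+1\}.
\]
For a forest $F$ the regularity is known exactly, namely $\reg(I(F))=\nu(F)+1$ (the $s=1$ instance of the forest case of \cite{BHT}, originally \cite{Zheng}). Using this together with the monotonicity $\nu(H)\le\nu(G)$ for induced subgraphs $H$, I obtain $\reg(I(G\setminus x))=\nu(G\setminus x)+1\le \nu(G)+1$ and $\reg(I(G\setminus N[x]))+1=\nu(G\setminus N[x])+2\le\nu(G)+2$. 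The maximum of these two quantities is $\nu(G)+2$, which is the desired bound.

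The argument is short, so the only genuine subtlety --- the step I would verify most carefully --- is the reduction to forests: one must confirm that deleting a single cycle vertex truly destroys the unique cycle, so that both $G\setminus x$ and $G\setminus N[x]$ are acyclic, and that the forest regularity formula is correctly applied to the (possibly disconnected) graphs that arise. Everything else is routine bookkeeping with the two inequalities of Theorem \ref{Inequalities} and the induced-matching monotonicity recorded in Section \ref{pre}.
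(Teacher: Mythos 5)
Your proof is correct. It is worth pointing out that the paper does not actually prove this theorem: it is imported wholesale, the lower bound from Katzman \cite{Katzman} and the upper bound from B{\i}y{\i}ko{\u{g}}lu--Civan \cite{BC2}, so any complete argument is necessarily a different route from the paper's. What your argument buys is self-containment using only tools the paper already records: the lower bound follows from Theorem \ref{Inequalities}(1) applied to the induced subgraph spanned by a maximum induced matching (a disjoint union of $\nu(G)$ edges, whose regularity you compute correctly via additivity of $\reg(R/I)$ over disjoint unions, i.e.\ \cite[Lemma 8]{Wood}); the upper bound follows from Theorem \ref{Inequalities}(2) applied at a cycle vertex $x$, where unicyclicity forces both $G\setminus x$ and $G\setminus N[x]$ to be forests, so Zheng's formula $\reg(I(F))=\nu(F)+1$ together with $\nu(H)\leq\nu(G)$ for induced subgraphs finishes it. This deletion-plus-Zheng technique is precisely what the paper uses later for its sharper results (Lemma \ref{whiskerreg}, Lemma \ref{unireg_1}, Theorem \ref{unireg_2}), except there one must choose the deleted vertex more cleverly to get $\nu(G)+1$ rather than $\nu(G)+2$; your choice of an arbitrary cycle vertex is exactly what makes the weaker bound come out so cheaply. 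Two small points to make the write-up airtight: (i) the degenerate cases where $G\setminus x$ or $G\setminus N[x]$ has no edges require the usual convention for the (zero) edge ideal, under which both displayed estimates still hold; (ii) you should note explicitly that Zheng's formula applies to possibly disconnected forests, since $G\setminus x$ and $G\setminus N[x]$ typically are disconnected --- this is indeed how the paper itself invokes \cite[Theorem 2.18]{Zheng}, so there is no gap.
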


We start our investigation by computing the regularity of a unicyclic graph with exactly one leaf.

\begin{lem}\label{whiskerreg}
Let $G$ be obtained from $C_n:x_1x_2 \cdots x_n$ by attaching a leaf, say $y,$ to a vertex  $x_i$ where $1 \leq i \leq n$. Then
$$\reg(I(G))=\nu(G)+1.$$
\end{lem}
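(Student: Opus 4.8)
The plan is to pin down $\reg(I(G))$ from both sides: the lower bound $\reg(I(G)) \ge \nu(G)+1$ is already furnished by Theorem \ref{unicyclicreg}, so the whole task reduces to proving the matching upper bound $\reg(I(G)) \le \nu(G)+1$. By the cyclic symmetry of $C_n$ I may relabel so that the leaf $y$ is attached to $x_1$; write $v=x_1$ and $u=y$, so that $e=\{u,v\}$ is the unique leaf of $G$.

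The key move is to apply Theorem \ref{Inequalities}(2) with the splitting vertex $x=x_1$, which gives $\reg(I(G)) \le \max\{\reg(I(G\setminus x_1)),\ \reg(I(G\setminus N[x_1]))+1\}$. The reason for choosing $x_1$ is that \emph{both} resulting graphs are forests. Deleting $x_1$ breaks the cycle into the path on $x_2,\dots,x_n$ and leaves $y$ as an isolated vertex, while deleting $N[x_1]=\{x_1,x_2,x_n,y\}$ leaves the path on $x_3,\dots,x_{n-1}$. Since isolated vertices affect neither the regularity of the edge ideal nor the induced matching number, each of these graphs is, up to isolated vertices, a path, hence a forest. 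I then invoke the forest case of the Beyarslan--H\`a--Trung theorem at $s=1$, namely $\reg(I(F))=\nu(F)+1$ for any forest $F$, to rewrite the bound as $\reg(I(G)) \le \max\{\nu(G\setminus x_1)+1,\ \nu(G\setminus N[x_1])+2\}$.

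It then remains to show that each of the two quantities on the right is at most $\nu(G)+1$. For the first, $G\setminus x_1$ is an induced subgraph of $G$, so induced-subgraph monotonicity of the induced matching number gives $\nu(G\setminus x_1)\le \nu(G)$, whence $\nu(G\setminus x_1)+1\le \nu(G)+1$. For the second, which is the delicate one, naive monotonicity only yields $\nu(G\setminus N[x_1])\le\nu(G)$ and thus $\nu(G\setminus N[x_1])+2\le\nu(G)+2$, which is one too large. The repair is exactly Observation \ref{indmatch}: because $y$ is a leaf with unique neighbor $x_1$, any induced matching of $G\setminus N[x_1]$ extends, upon adjoining the leaf edge $e$, to an induced matching of $G$, so $\nu(G\setminus N[x_1])+1\le\nu(G)$ and hence $\nu(G\setminus N[x_1])+2\le \nu(G)+1$.

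Putting these together yields $\reg(I(G))\le \nu(G)+1$, and combined with the lower bound from Theorem \ref{unicyclicreg} this forces $\reg(I(G))=\nu(G)+1$. I expect the only genuine obstacle to be the second estimate: it is the place where one must resist treating $G\setminus N[x_1]$ as a mere induced subgraph and instead use the leaf structure to absorb the extra $+1$, and it is precisely Observation \ref{indmatch} that makes this work cleanly across all residues of $n$ modulo $3$ without any case analysis. (A few small values of $n$, where $G\setminus N[x_1]$ becomes edgeless, can be verified directly and are consistent with the general argument.)
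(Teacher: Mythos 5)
Your proof is correct, but it takes a genuinely different route from the paper's. The paper applies the B{\i}y{\i}ko{\u{g}}lu--Civan dichotomy (\cite[Lemma 3.25]{biyikoglu_civan}) at the leaf $y$, so that one branch of the dichotomy is the cycle $C_n$ itself; this forces an appeal to Jacques's computation of $\reg(I(C_n))$ together with a case analysis on $n$ modulo $3$ (comparing $\nu(C_n)$ with $\nu(G)$ in each residue class), while the other branch, $\reg(I(G\setminus N_G[x_i]))+1$, is handled exactly as you handle yours, via Zheng's forest formula, Observation \ref{indmatch}, and Katzman's lower bound. You instead apply the max-inequality of Theorem \ref{Inequalities}(2) at the attachment vertex $x_1$, which destroys the cycle in \emph{both} branches: $G\setminus x_1$ and $G\setminus N[x_1]$ are forests (up to isolated vertices), so the entire argument reduces to Zheng's theorem plus Observation \ref{indmatch}, uniformly in $n$, with no mod-$3$ case split and no use of the cycle regularity formula. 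The trade-off is that Theorem \ref{Inequalities}(2) gives only an inequality where the paper's lemma gives an equality-dichotomy, but since the lower bound $\reg(I(G))\geq \nu(G)+1$ comes for free (from Theorem \ref{unicyclicreg}, or Katzman as the paper cites), an upper bound is all that is needed, and your choice of splitting vertex is arguably cleaner. Your parenthetical about small $n$ is the right flag: for $n=3,4$ the graph $G\setminus N[x_1]$ is edgeless and the term $\reg(I(G\setminus N[x_1]))+1$ must be read under the usual convention for the zero edge ideal, where it is at most $2\leq\nu(G)+1$, so the argument goes through.
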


\begin{proof}
By \cite[Lemma 3.25]{biyikoglu_civan}, we have
\begin{eqnarray*}
\reg(I(G))=\reg(I(G \setminus y)) &\text{or}& \reg(I(G))=\reg(I(G \setminus N_G[x_i]))+1.
\end{eqnarray*}
If $\reg(I(G))=\reg(I(G \setminus y))$, then by \cite[Theorem 7.6.28]{Jacques}
$$\reg(I(G))=\reg(I(G \setminus y))=\reg(I(C_n)).$$
If $n \equiv \{0,1\}(mod~3)$, then $\reg(I(G))=\reg(I(C_n))=\nu(G)+1$. If $n \equiv 2 (mod~3)$, then $\reg(I(G))=\reg(I(C_n))=\nu(C_n)+2=\nu(G)+1$.
If $\reg(I(G))=\reg(I(G \setminus N_G[x_i]))+1$, then by \cite[Theorem 2.18]{Zheng} and Observation \ref{indmatch},
$$\reg(I(G))=\reg(I(G \setminus N_G[x_i]))+1=\nu(G \setminus N_G[x_i])+2 \leq \nu(G)+1.$$
By \cite[Lemma 2.2]{Katzman}, we have  $\reg(I(G))=\reg(I(G \setminus N_G[x_i]))+1=\nu(G)+1$. Therefore $\reg(I(G))=\nu(G)+1$.
\end{proof}

The first general case we consider is based on the size of the cycle in a unicyclic graph.

\begin{lem}\label{unireg_1} Let $G$ be a unicyclic graph with cycle $C_n$.
If $n \equiv \{0,1\} ~(mod~3)$, then $$\reg(I(G))=\nu(G)+1.$$
\end{lem}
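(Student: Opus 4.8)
The plan is to establish the two-sided inequality $\nu(G)+1 \leq \reg(I(G)) \leq \nu(G)+1$ and conclude equality. The lower bound $\nu(G)+1 \leq \reg(I(G))$ is immediate from Theorem \ref{unicyclicreg}, so the entire burden of the argument falls on proving the upper bound $\reg(I(G)) \leq \nu(G)+1$ when $n \equiv 0,1 \pmod 3$. I would prove this by induction on the number of leaves (or more generally on the number of vertices outside the cycle), with Lemma \ref{whiskerreg} serving as a base-case anchor once the graph has been reduced to a cycle with a single whisker, and $\reg(I(C_n)) = \nu(C_n)+1$ for $n \equiv 0,1 \pmod 3$ (a known fact, e.g. via \cite{Jacques} or \cite{Zheng}) anchoring the bare-cycle case.

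The main engine will be the deletion/neighborhood recursion from Theorem \ref{Inequalities}(2): choosing a suitable vertex $x$, we have $\reg(I(G)) \leq \max\{\reg(I(G\setminus x)),\ \reg(I(G\setminus N_G[x]))+1\}$. The key is to pick $x$ cleverly. I would take $u$ to be a leaf of $G$ lying as far as possible from the cycle (a leaf of the attached forest), with unique neighbor $v$. Applying the recursion at $u$ first collapses one branch: $G\setminus u$ is again unicyclic (or a cycle) with one fewer leaf-vertex, so by induction $\reg(I(G\setminus u)) \leq \nu(G\setminus u)+1 \leq \nu(G)+1$. For the second term, note $G \setminus N_G[v]$ is a disjoint union of a (possibly empty) unicyclic or forest component together with some trees; here I would combine Observation \ref{indmatch}, which gives $\nu(G\setminus N_G[v]) + 1 \leq \nu(G)$, with the forest/unicyclic regularity bounds so that $\reg(I(G\setminus N_G[v])) + 1 \leq \nu(G\setminus N_G[v]) + 2 \leq \nu(G)+1$. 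The subtlety is that deleting $N_G[v]$ can disconnect the cycle, turning the unicyclic graph into a forest (a disjoint union of paths with attached trees) on which the Beyarslan--H\`a--Trung/Ha result $\reg = \nu+1$ for forests applies cleanly; one must check that in all the cases arising, the residual graph's regularity stays at or below $\nu(G)+1$.

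The hardest part will be the bookkeeping over cases determined by how the chosen vertex interacts with the cycle, and ensuring the induction is set up so that neither $G\setminus x$ nor $G\setminus N[x]$ escapes the hypothesis class in a way that breaks the bound. In particular, when the graph is close to a bare cycle one cannot always find a far leaf, so I expect the genuine difficulty to be the transitional cases where deleting a vertex converts the single cycle into one or more paths: here the residual components are forests to which the $n\equiv 0,1 \pmod 3$ arithmetic no longer directly applies, and I must verify via the forest formula $\reg = \nu+1$ plus the induced-matching inequality of Observation \ref{indmatch} that the accumulated $+1$ from the neighborhood-deletion term does not push the bound above $\nu(G)+1$. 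Once the case analysis confirms $\reg(I(G\setminus N_G[v]))+1 \leq \nu(G)+1$ and the inductive term $\reg(I(G\setminus u)) \leq \nu(G)+1$, the recursion yields $\reg(I(G)) \leq \nu(G)+1$, and combined with the lower bound we obtain $\reg(I(G)) = \nu(G)+1$.
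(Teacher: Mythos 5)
Your plan coincides with the paper's proof in every structural respect: the lower bound from Theorem \ref{unicyclicreg} (Katzman), induction with Lemma \ref{whiskerreg} and the cycle/forest formulas of \cite{Jacques} and \cite{Zheng} as anchors, and Observation \ref{indmatch} supplying the drop in the induced matching number for the neighborhood-deletion term. But there is one genuine misstep in how you invoke the recursion. Theorem \ref{Inequalities}(2) applied at the leaf $u$ yields
$$\reg(I(G)) \leq \max\{\reg(I(G\setminus u)),\ \reg(I(G\setminus N_G[u]))+1\},$$
and since $N_G[u]=\{u,v\}$, the second term involves $G\setminus\{u,v\}$, \emph{not} $G\setminus N_G[v]$ as you wrote. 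The distinction is not cosmetic: Observation \ref{indmatch} gives the needed strict inequality $\nu(G\setminus N_G[v])+1\leq \nu(G)$ only when the \emph{closed neighborhood of the support vertex} $v$ is deleted; for $G\setminus\{u,v\}$ the induced matching number need not drop. For instance, take $G$ to be the triangle $x_1x_2x_3$ (so $n\equiv 0 \pmod 3$) with a pendant path $x_1$--$w$--$v$--$u$ and an extra leaf $z$ attached to $w$: one checks $\nu(G)=2$ while $G\setminus N_G[u]=G\setminus\{u,v\}$ still has $\nu=2$ (the matching $\{wz,\,x_2x_3\}$ is induced), so the second term in the literal at-$u$ recursion gives only $\nu(G)+2$ and the induction does not close.

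The repair is small, and either version recovers the paper's argument verbatim. Option one (the paper's choice): apply Theorem \ref{Inequalities}(2) at the support vertex $v$; the first term then becomes $G\setminus v$, which is again a unicyclic graph with the same cycle (hence the residue condition is inherited), a forest, or a cycle, and is bounded by $\nu(G\setminus v)+1\leq \nu(G)+1$ exactly as you bound $G\setminus u$, while the second term is genuinely $G\setminus N_G[v]$, where Observation \ref{indmatch} and the induction hypothesis (or \cite[Theorem 2.18]{Zheng}, \cite[Theorem 7.6.28]{Jacques}) give $\reg(I(G\setminus N_G[v]))+1\leq \nu(G\setminus N_G[v])+2\leq \nu(G)+1$. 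Option two: keep your pair $\bigl(G\setminus u,\ G\setminus N_G[v]\bigr)$ but justify it by the leaf-edge lemma \cite[Lemma 3.25]{biyikoglu_civan} — which the paper itself uses in Lemma \ref{whiskerreg} and Lemma \ref{EvenCycle1} — stating that for a leaf $u$ with neighbor $v$ one has $\reg(I(G))=\reg(I(G\setminus u))$ or $\reg(I(G))=\reg(I(G\setminus N_G[v]))+1$. With either fix, your case bookkeeping (residual components are forests, the unchanged cycle, or smaller unicyclic graphs, all with $\reg=\nu+1$) goes through and reproduces the paper's proof.
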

\begin{proof}
 By \cite[Lemma 2.2]{Katzman}, we have $\reg(I(G)) \geq \nu(G)+1$.
 It suffices to show that $\reg(I(G)) \leq \nu(G)+1$. Let $F$ be the graph with
 $E(F)=E(G) \setminus E(C_n)=\{f_1,\ldots,f_k\}$.
We use induction on $k$. If $k=1$, then by Lemma \ref{whiskerreg}, we have $\reg(I(G)) = \nu(G)+1$.
Assume that $k \geq 2$. There is a leaf $y$ in $G$ such that $\{x\}=N_G(y)$. Set $G'=G\setminus x$ and
$G''=  G \setminus N_G[x]$. By Theorem \ref{Inequalities}, we have
\begin{equation}\label{equation1}
\reg(I(G)) \leq \max \{ \reg(I(G')), \reg(I(G''))+1\}.
\end{equation}
Note that $G'$ is a unicyclic graph, a forest, or a cycle and also $G''$ is a unicyclic graph, a forest, or a cycle.
Therefore
$$\reg(I(G'))=\nu(G')+1 \leq \nu(G)+1$$
by the induction hypothesis or \cite[Theorem 2.18]{Zheng} or \cite[Theorem 7.6.28]{Jacques}.

By Observation \ref{indmatch}, $\nu(G'')<\nu(G)$.
Then it follows from  the induction hypothesis or \cite[Theorem 2.18]{Zheng} or \cite[Theorem 7.6.28]{Jacques} that
$$\reg(I(G''))=\nu(G'')+1 \leq \nu(G).$$
Therefore $\reg(I(G)) \leq \nu(G)+1$ by Equation \ref{equation1}.
\end{proof}

A unicyclic graph can be viewed as a graph obtained by attaching trees to some vertices of a cycle $C_n.$ Those vertices of the cycle $C_n$ can be thought as the roots of 
attached trees.
\begin{figure}[h]
  \includegraphics[width=0.5\linewidth]{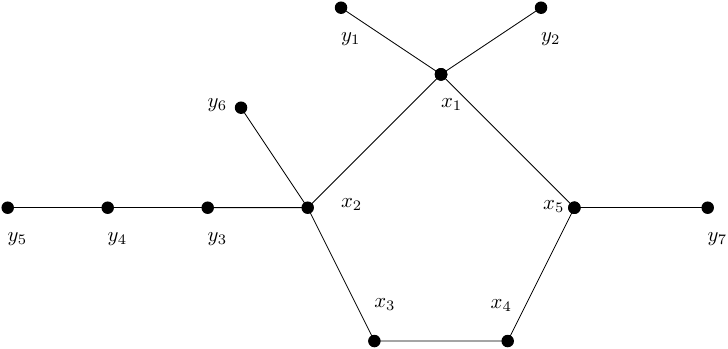}
  \caption{A unicyclic graph}
  \label{fig:runningex}
\end{figure}

In the above graph, vertices $x_1, x_2$ and $x_5$ can be considered as roots of the trees. 
Let $T_1$ be the tree with the root $x_1$ and the edges $\Big \{\{x_1,y_1\},\{x_1,y_2\} \Big\},$  
$T_2$ be the tree with root $x_2$ and the edges  $\Big\{\{x_2,y_6\},\{x_2,y_3\},\{y_3,y_4\},\{y_4,y_5\}\Big \},$ and $T_3$ be the tree with root $x_5$ and the edge  $\Big \{\{x_5,y_7\}\Big\}.$

Understanding the relationship between the induced matching numbers of a unicyclic graph and collection of some induced subgraphs of the attached rooted trees plays a key 
role in the classification of regularity of unicyclic graphs. For this purpose, we introduce the following notation and use it in the rest of the paper.

\subsection{Notation.} \label{notation} Let $G$ be a unicyclic graph with cycle $C_n:x_1x_2 \cdots x_n$ and $T_1,\ldots,T_m$ be the rooted trees of $G$ with roots $\{x_{i_1},\ldots,x_{i_m}\} \subseteq V(C_n)$. Consider all the neighbors of $\{x_{i_1},\ldots,x_{i_m}\} $ in the rooted trees and denote that collection by $\Gamma(G).$
$$\Gamma(G)= \bigcup_{j=1}^m N_{T_j}(x_{i_j}):=\{y_1,\ldots,y_t\} \subseteq \bigcup_{j=1}^m V(T_j).$$
Note that none of the vertices in $\Gamma(G)$  can be a vertex on the cycle $C_n.$
Let $H_j$ be the induced subgraph of $T_j$ obtained by deleting the elements of $\Gamma(G)$  that are vertices in $T_j.$
$$H_j=T_j \setminus \{z_k \mid z_k \in V(T_j) \cap \Gamma(G) \}.$$ 
Note that $H_j$ is either a forest or a tree, and $H_j$'s are disjoint. Thus
\begin{eqnarray}\label{indequal}
G \setminus \Gamma(G) =  C_n \coprod \Big(\coprod_{j=1}^{m}H_j\Big) \textrm{ and } \nu (G \setminus \Gamma(G) ) =  \nu(C_n)+\sum_{j=1}^m \nu(H_j).
\end{eqnarray}

\begin{example} Let $G$ be the graph in Figure \ref{fig:runningex}. Then  $\Gamma(G) =\{y_1,y_2,y_3,y_6,y_7\}$ and  $G \setminus \Gamma(G) = C_5 \coprod  H_2$ where $\{y_4,y_5\}$ is the only edge of $H_2.$  
\end{example}

It turns out that induced matching of $G$ is preserved under deletion of vertices of $\Gamma(G)$ if it is preserved  on each rooted tree.

\begin{lem}\label{ind_obs} If $\nu(H_j)=\nu(T_j)$ for all $1 \leq j \leq m$, then $\nu(G \setminus \Gamma(G))=\nu(G).$
\end{lem}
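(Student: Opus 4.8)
The plan is to establish the inequality $\nu(G \setminus \Gamma(G)) \geq \nu(G)$, since the reverse inequality $\nu(G \setminus \Gamma(G)) \leq \nu(G)$ holds automatically because $G \setminus \Gamma(G)$ is an induced subgraph of $G$ (as noted in the preliminaries). So the entire content is in producing, from a maximum induced matching of $G$, an induced matching of $G \setminus \Gamma(G)$ of the same size.

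\textbf{Strategy.} First I would fix a maximum induced matching $\mathcal{M}$ of $G$, so $|\mathcal{M}| = \nu(G)$. Recall from the Notation subsection that $G \setminus \Gamma(G) = C_n \coprod \big(\coprod_{j=1}^m H_j\big)$, and by Equation~\eqref{indequal} we have $\nu(G \setminus \Gamma(G)) = \nu(C_n) + \sum_{j=1}^m \nu(H_j)$. The idea is to decompose $\mathcal{M}$ according to where its edges live: an edge of $\mathcal{M}$ either lies on the cycle $C_n$, lies entirely inside some rooted tree $T_j$, or is one of the ``root edges'' $\{x_{i_j}, y\}$ with $y \in \Gamma(G)$ joining a cycle vertex to its tree-neighbor. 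The hypothesis $\nu(H_j) = \nu(T_j)$ for every $j$ is precisely what lets me discard the contributions coming from $\Gamma(G)$ without losing matching size.

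\textbf{Key steps.} For each tree $T_j$, consider the sub-collection $\mathcal{M}_j$ of edges of $\mathcal{M}$ that are contained in $T_j$ (including possibly a root edge at $x_{i_j}$). Since $\mathcal{M}$ is an induced matching of $G$, its restriction to the vertex set of $T_j$ is an induced matching of $T_j$, so $|\mathcal{M}_j| \leq \nu(T_j) = \nu(H_j)$. Thus each $T_j$ contributes at most $\nu(H_j)$ edges to $\mathcal{M}$, and I may replace the edges of $\mathcal{M}_j$ by a maximum induced matching of $H_j$ (of size $\nu(H_j)$), which avoids $\Gamma(G)$ entirely by construction of $H_j$. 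Separately, the edges of $\mathcal{M}$ lying on the cycle $C_n$ form an induced matching of $C_n$, contributing at most $\nu(C_n)$. Because the $H_j$ and the cycle are pairwise disjoint in $G \setminus \Gamma(G)$, assembling a maximum induced matching in each piece produces an induced matching of $G \setminus \Gamma(G)$ of size $\nu(C_n) + \sum_j \nu(H_j) = \nu(G \setminus \Gamma(G))$. It then remains to check that this assembled collection is at least as large as $\nu(G)$, i.e.\ that the original $\mathcal{M}$ could not have exceeded $\nu(C_n) + \sum_j \nu(H_j)$.

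\textbf{Main obstacle.} The delicate point is the bookkeeping at the roots. A root edge $\{x_{i_j}, y_k\}$ with $y_k \in \Gamma(G)$ can be used by $\mathcal{M}$, and such an edge straddles the boundary between the cycle and a tree; I must verify that counting it against the tree $T_j$ (where it contributes to the bound $|\mathcal{M}_j| \leq \nu(T_j)$) does not also let it be double-counted against the cycle, and that the partition of $\mathcal{M}$ into cycle-edges and tree-edges is genuinely a partition. The cleanest way to handle this is to observe that any edge incident to a root vertex $x_{i_j}$ and a tree vertex belongs unambiguously to $T_j$ and blocks no other cycle edge from being in an induced matching beyond what $\nu(C_n)$ already accounts for; so the bound $\nu(G) = |\mathcal{M}| \leq \nu(C_n) + \sum_{j=1}^m |\mathcal{M}_j| \leq \nu(C_n) + \sum_{j=1}^m \nu(H_j) = \nu(G \setminus \Gamma(G))$ goes through. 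Combined with the reverse inequality this yields $\nu(G \setminus \Gamma(G)) = \nu(G)$, completing the proof.
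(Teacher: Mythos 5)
Your proof is correct and takes essentially the same route as the paper's: both establish $\nu(G \setminus \Gamma(G)) \leq \nu(G)$ from the induced-subgraph property, then decompose a maximum induced matching of $G$ into its cycle part and its tree parts (with root edges assigned to the trees), bound these pieces by $\nu(C_n)$ and $\nu(T_j)=\nu(H_j)$, and conclude via Equation \ref{indequal}. The extra bookkeeping you carry out at the roots, and the re-assembly of a matching inside $G \setminus \Gamma(G)$, are sound but already implicit in the paper's one-line decomposition, since $E(G)$ genuinely partitions into $E(C_n)$ and the $E(T_j)$'s and Equation \ref{indequal} already computes $\nu(G \setminus \Gamma(G))$.
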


\begin{proof}
Since $G \setminus \Gamma(G)$ is an induced subgraph of $G,$ we have $\nu(G \setminus \Gamma(G))\leq \nu(G).$
It remains to prove the reverse inequality. It follows from the assumption and Equation \ref{indequal} that
$$
   \begin{array}{lcll}
\nu (G \setminus \Gamma(G))&=& \nu(C_n)+\sum_{j=1}^m \nu(T_j). 
    \end{array}
$$
If $\mathcal{C}$ is an induced matching of $G,$ then $\mathcal{C}$ can be decomposed as a union of an induced matching in $C_n$  and induced matchings in $T_j$'s. Hence  
$$\nu(G) \leq  \nu(C_n)+\sum_{j=1}^m \nu(T_j). $$ 
It concludes that $\nu(G \setminus \Gamma(G))=\nu(G).$
\end{proof}

With the help of Lemma \ref{ind_obs}, we get another sufficient condition for $\reg (I(G))=\nu(G)+1$ when $G$ is a unicyclic graph.

\begin{thm}\label{unireg_2} If $\nu(G \setminus \Gamma(G))<\nu(G)$, then $\reg(I(G))=\nu(G)+1.$
\end{thm}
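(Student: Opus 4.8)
The plan is to prove the upper bound $\reg(I(G)) \le \nu(G)+1$; combined with the lower bound $\reg(I(G)) \ge \nu(G)+1$ from \cite[Lemma 2.2]{Katzman}, this yields the asserted equality. The hypothesis enters through the contrapositive of Lemma \ref{ind_obs}: since $\nu(G\setminus\Gamma(G)) \neq \nu(G)$, the equalities $\nu(H_j)=\nu(T_j)$ cannot all hold, so there is an index $j$ with $\nu(H_j) < \nu(T_j)$ (the reverse inequality being automatic because $H_j$ is an induced subgraph of $T_j$). Fix such a $j$ and write $r := x_{i_j}$ for the corresponding root, a vertex lying on the cycle $C_n$. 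I would then apply the deletion bound Theorem \ref{Inequalities}(2) at $r$:
\[
\reg(I(G)) \le \max\{\, \reg(I(G\setminus r)),\ \reg(I(G\setminus N_G[r]))+1 \,\}.
\]

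Because $r$ lies on the unique cycle of $G$, both $G\setminus r$ and $G\setminus N_G[r]$ are acyclic, hence forests, so by \cite[Theorem 2.18]{Zheng} (or \cite[Theorem 7.6.28]{Jacques}) their regularities equal induced matching number plus one. The first term is harmless: as $G\setminus r$ is an induced subgraph of $G$, we get $\reg(I(G\setminus r)) = \nu(G\setminus r)+1 \le \nu(G)+1$. For the second term it suffices to establish the strict inequality $\nu(G\setminus N_G[r]) < \nu(G)$, for then $\reg(I(G\setminus N_G[r]))+1 = \nu(G\setminus N_G[r])+2 \le \nu(G)+1$. This inequality is the heart of the argument.

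To prove $\nu(G\setminus N_G[r]) < \nu(G)$ I would exploit that $T_j$ meets the rest of $G$ only at $r$. Set $B := G\setminus(V(T_j)\cup\{x_{i_j-1},x_{i_j+1}\})$ and $T_j' := T_j \setminus N_{T_j}[r]$. Since $N_G[r] = \{r,x_{i_j-1},x_{i_j+1}\}\cup N_{T_j}(r)$, deleting it splits $G$ into the vertex-disjoint pieces $T_j'$ and $B$ with no edges between them (every edge joining $V(T_j)$ to its complement is incident to $r$). Hence $G\setminus N_G[r] = T_j' \coprod B$, giving $\nu(G\setminus N_G[r]) = \nu(T_j') + \nu(B) = \nu(H_j)+\nu(B)$, the last step because $T_j'$ differs from $H_j$ only by the isolated vertex $r$. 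On the other hand, maximum induced matchings of $T_j$ and of $B$ together form an induced matching of $G$: their vertex sets are disjoint, and again because the only $T_j$-to-outside edges run through $r$ to $x_{i_j\pm1}\notin V(B)$, no edge connects the two matchings. Thus $\nu(G) \ge \nu(T_j)+\nu(B)$, and combining with $\nu(H_j) < \nu(T_j)$ gives $\nu(G\setminus N_G[r]) = \nu(H_j)+\nu(B) < \nu(T_j)+\nu(B) \le \nu(G)$, as needed.

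I expect the main obstacle to lie in the last paragraph, namely the verification that the chosen matchings of $T_j$ and of $B$ do not interact; this rests on the structural fact that a rooted tree of a unicyclic graph is attached to the cycle at a single vertex, together with the deliberate exclusion of the cycle-neighbors $x_{i_j\pm1}$ from $B$. It is worth noting that the argument never distinguishes cases according to $n \bmod 3$: the hypothesis $\nu(G\setminus\Gamma(G))<\nu(G)$ alone drives the proof, so the reasoning is uniform in $n$.
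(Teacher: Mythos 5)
Your proposal is correct and follows essentially the same route as the paper: both invoke Katzman's lower bound, use the contrapositive of Lemma \ref{ind_obs} to find a root $x_{i_j}$ with $\nu(H_j)<\nu(T_j)$, apply Theorem \ref{Inequalities}(2) at that root, note both resulting graphs are forests (so Zheng's theorem applies), and reduce everything to showing $\nu(G\setminus N_G[x_{i_j}])<\nu(G)$ via the decomposition $G\setminus N_G[x_{i_j}] = (T_j\setminus N_{T_j}[x_{i_j}])\coprod B$. The only difference is cosmetic: where you derive the strict inequality by comparing $\nu(H_j)+\nu(B)$ with $\nu(T_j)+\nu(B)$, the paper constructs an explicit augmented induced matching of $G$ from a maximum induced matching of $G\setminus N_G[x_{i_j}]$; your bookkeeping (in particular, isolating the vertex $x_{i_j}$ when passing from $H_j$ to $T_j\setminus N_{T_j}[x_{i_j}]$) is, if anything, slightly cleaner.
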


\begin{proof} By \cite[Lemma 2.2]{Katzman}, we have $\reg(I(G)) \geq \nu(G)+1$.
It suffices to show that $\reg(I(G)) \leq \nu(G)+1$. Since $\nu(G \setminus \Gamma(G))<\nu(G),$ by Lemma \ref{ind_obs}, there exists a rooted tree
 $T_r$ with root $x_{i_r}$ such that $\nu(H_r) <\nu(T_r)$  for some $r \in \{1, \ldots, m\}$.

Let $G_1=G\setminus x_{i_r}$ and $G_2=  G \setminus N_G[x_{i_r}]$. By Theorem \ref{Inequalities}, we have
\begin{equation}\label{eq}
\reg(I(G)) \leq \max \{ \reg(I(G_1)),~\reg(I(G_2))+1\}.
\end{equation}

Since $G_1$  and $G_2$ are forests, by \cite[Theorem 2.18]{Zheng},  we have 
$$\reg(I(G_i))=\nu(G_i)+ 1 \textrm{ for } i=1,2.$$

It is clear that $\reg(I(G_1)) \leq \nu(G)+1.$ Thus proving $\nu(G_2)+1 \leq \nu(G)$ yields to the desired equality.

Observe that $G_2$ can be written as a disjoint union  of $H$ and $H_r$ where $H$ is the induced subgraph of $G$ obtained by deleting the vertices of $T_r$ and 
$N_G[x_{i_r}].$  Let $\C$ be an induced matching of $G_2.$ Then $\C$ can be decomposed as a disjoint union of an induced matching in $H$ and $H_r.$ Let $\C_H$ and 
$\C_{H_r}$ denote the corresponding induced matchings of $H$ and $H_r,$ respectively.

Suppose $\C_{H_r}= \{h_1,\ldots,h_\beta\}.$ Since $\nu(H_r) <\nu(T_r),$ there exists an edge $e$ incident to $z_j$ for some $z_j \in V(T_r) \cap \Gamma(G)$ such that $\{e,h_1',\ldots,h_\beta'\}$ is an induced matching in $T_r.$ Note that $\{h_1',\ldots,h_\beta'\}$ is an induced matching in $H_r.$ Furthermore, $\C_{H} \cup \{e,h_1',\ldots,h_\beta'\}$ is an induced matching in $G$ due to the fact that neighbors of $x_{i_r}$ are deleted to construct $H$ and $z_j \in N_G[x_{i_r}].$ Therefore $\reg(I(G_2))=\nu(G_2)+1 \leq \nu(G).$
It follows from Equation \ref{eq} that $\reg(I(G))\leq \nu(G)+1.$
\end{proof}

\begin{example}
Let $G$ be the graph in Figure \ref{fig:runningex}. Note that $\nu(G \setminus \Gamma (G))=2 < \nu(G)=3.$ Thus by Theorem \ref{unireg_2}, we have $\reg (G)=4.$
\end{example}

Our next step is to provide sufficient conditions for $\reg(I(G))=\nu(G)+2$ when $G$ is a unicyclic graph.

\begin{thm}\label{regupper} Let $G$ be a unicyclic graph with cycle $C_n$. If  $\nu(G \setminus \Gamma(G))=\nu(G)$  and $n \equiv 2~  (mod~3),$ then $\reg(I(G))=\nu(G)+2$.
\end{thm}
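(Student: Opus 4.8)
The plan is to prove the equality by pairing the known upper bound with a matching lower bound coming from a well-chosen induced subgraph. Theorem~\ref{unicyclicreg} already furnishes $\reg(I(G)) \le \nu(G)+2$, so it suffices to establish the reverse inequality $\reg(I(G)) \ge \nu(G)+2$. Since regularity is monotone under taking induced subgraphs (Theorem~\ref{Inequalities}(1)), the natural move is to exhibit an induced subgraph $H$ of $G$ with $\reg(I(H)) = \nu(G)+2$.

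The candidate is $H := G \setminus \Gamma(G)$, which is an induced subgraph of $G$ and, by Equation~\ref{indequal}, decomposes as the disjoint union $C_n \coprod \big(\coprod_{j=1}^{m} H_j\big)$ with $\nu(H) = \nu(C_n) + \sum_{j=1}^m \nu(H_j)$. The hypothesis $\nu(G \setminus \Gamma(G)) = \nu(G)$ reads exactly as $\nu(H) = \nu(G)$. First I would compute $\reg(I(H))$ using the additivity of regularity over disjoint unions of graphs, namely $\reg(R/I(G_1 \coprod G_2)) = \reg(R_1/I(G_1)) + \reg(R_2/I(G_2))$. Working in the quotient ring and invoking $\reg(I(C_n)) = \nu(C_n)+2$ when $n \equiv 2 \pmod 3$ (the same fact used in Lemma~\ref{whiskerreg}) together with $\reg(I(H_j)) = \nu(H_j)+1$ for each forest $H_j$ (by \cite[Theorem~2.18]{Zheng}), I would obtain
$$\reg(R/I(H)) = \big(\nu(C_n)+1\big) + \sum_{j=1}^{m}\nu(H_j) = \nu(H)+1,$$
the last equality being Equation~\ref{indequal}. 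Hence $\reg(I(H)) = \reg(R/I(H))+1 = \nu(H)+2 = \nu(G)+2$, using $\nu(H)=\nu(G)$.

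Finally, since $H$ is an induced subgraph of $G$, Theorem~\ref{Inequalities}(1) gives $\reg(I(G)) \ge \reg(I(H)) = \nu(G)+2$, and combining this with the upper bound of Theorem~\ref{unicyclicreg} yields the asserted equality. The step requiring the most care is the regularity computation for the disjoint union: one must verify that only $C_n$ contributes a surplus over the ``generic'' value. In the $R/I$ formulation every forest component $H_j$ satisfies $\reg(R_j/I(H_j)) = \nu(H_j)$, while $C_n$ satisfies $\reg(R/I(C_n)) = \nu(C_n)+1$, so summing over components produces a single extra unit, giving $\reg(R/I(H)) = \nu(H)+1$ and hence $\reg(I(H)) = \nu(H)+2$. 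This makes the role of the two hypotheses transparent: $n \equiv 2 \pmod 3$ is precisely what forces $\reg(I(C_n)) = \nu(C_n)+2$ rather than $\nu(C_n)+1$, and $\nu(G \setminus \Gamma(G)) = \nu(G)$ is what allows the induced-subgraph lower bound to reach $\nu(G)+2$ rather than stopping at a smaller value.
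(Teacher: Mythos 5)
Your proposal is correct and is essentially the paper's own proof: both take $H = G \setminus \Gamma(G) = C_n \coprod (\coprod_j H_j)$, compute $\reg(I(H)) = \nu(G)+2$ via additivity of regularity over disjoint unions (your $R/I$ formulation is equivalent to the paper's use of \cite[Lemma 8]{Wood}) together with the known values for $C_n$ with $n \equiv 2 \pmod 3$ and for forests, and then combine the induced-subgraph lower bound (Theorem \ref{Inequalities}(1)) with the upper bound from Theorem \ref{unicyclicreg}.
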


\begin{proof} Recall that $\nu(G \setminus \Gamma(G))=\nu(C_n)+\nu(\coprod_{i=1}^t H_i)$  for some $t\geq 0$ by Equation \ref{indequal}.
Then
$$
   \begin{array}{lcll}
    \reg(I(G \setminus \Gamma(G))&=&\reg(I(C_n))+\reg(I(\coprod_{i=1}^t H_i))-1 & \text{ (\cite[Lemma 8]{Wood})}\\
	& = & \nu(C_n)+2+\nu(\coprod_{i=1}^t H_i)) & \text{ (\cite[Theorem 7.6.28]{Jacques} and }\\
	& & & ~\text{ \cite[Theorem 2.18]{Zheng})}\\
    & = &  \nu(G \setminus \Gamma(G))+2 &\\
    & = & \nu(G)+2.
    \end{array}
$$
As $G \setminus \Gamma(G)$ is an induced subgraph of $G$, we have $\reg(I(G))\geq \nu(G)+2$ by Theorem \ref{Inequalities}. Hence $\reg(I(G)) = \nu(G)+2$ by Theorem \ref{unicyclicreg}.
\end{proof}

One of the main results in this section is an immediate corollary of Theorem \ref{unireg_2} and Theorem \ref{regupper}.

\begin{cor}\label{unireg1} Let $G$ be a unicyclic graph with cycle $C_n$. Then $\reg(I(G))=\nu(G)+2$ if and only if $n \equiv 2 ~ (mod~3) \textrm{ and }  \nu(G \setminus \Gamma(G))=\nu(G) .$
\end{cor}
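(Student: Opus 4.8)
The plan is to prove Corollary \ref{unireg1} by simply assembling the three results already established in this section, since the corollary is a biconditional whose two directions correspond almost exactly to Theorem \ref{regupper} and the contrapositive of Theorem \ref{unireg_2}, with Lemma \ref{unireg_1} handling the remaining modular case. Recall that by Theorem \ref{unicyclicreg} the regularity of any unicyclic graph is either $\nu(G)+1$ or $\nu(G)+2$, so asserting $\reg(I(G))=\nu(G)+2$ is equivalent to asserting $\reg(I(G))\neq\nu(G)+1$. This dichotomy is what makes the whole argument purely combinatorial: I only ever need to rule out the value $\nu(G)+1$ or produce it.

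For the reverse implication, suppose $n\equiv 2\ (mod\ 3)$ and $\nu(G\setminus\Gamma(G))=\nu(G)$. Then Theorem \ref{regupper} applies verbatim and gives $\reg(I(G))=\nu(G)+2$, so this direction is immediate with no extra work.

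For the forward implication I would argue by contraposition: assume the right-hand condition fails, so either $n\equiv\{0,1\}\ (mod\ 3)$ or $\nu(G\setminus\Gamma(G))<\nu(G)$ (or both), and I must conclude $\reg(I(G))\neq\nu(G)+2$, i.e. $\reg(I(G))=\nu(G)+1$. In the first case, Lemma \ref{unireg_1} directly yields $\reg(I(G))=\nu(G)+1$ regardless of the behavior of $\Gamma(G)$. In the second case, the hypothesis $\nu(G\setminus\Gamma(G))<\nu(G)$ is exactly the hypothesis of Theorem \ref{unireg_2}, which again gives $\reg(I(G))=\nu(G)+1$. Since these two cases exhaust the negation of the right-hand side, the contrapositive is complete, and combining both directions establishes the biconditional.

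There is essentially no technical obstacle here, as all the analytic and inductive content lives in the three cited results; the only point requiring mild care is the logical bookkeeping, namely checking that the negation of the conjunction ``$n\equiv 2\ (mod\ 3)$ and $\nu(G\setminus\Gamma(G))=\nu(G)$'' really is covered jointly by the hypotheses of Lemma \ref{unireg_1} and Theorem \ref{unireg_2}. Since the negation splits as $n\not\equiv 2\ (mod\ 3)$ (handled by Lemma \ref{unireg_1}) together with $\nu(G\setminus\Gamma(G))\neq\nu(G)$, which can only mean the strict inequality $\nu(G\setminus\Gamma(G))<\nu(G)$ because $G\setminus\Gamma(G)$ is an induced subgraph of $G$ and hence its induced matching number cannot exceed $\nu(G)$, the coverage is exact. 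I would state this observation explicitly so the reader sees that no modular residue class is left unaccounted for.
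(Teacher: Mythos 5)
Your proposal is correct and is essentially identical to the paper's own proof, which likewise invokes the dichotomy from Theorem \ref{unicyclicreg} and then cites Lemma \ref{unireg_1}, Theorem \ref{unireg_2}, and Theorem \ref{regupper} to cover both directions. The only difference is that you spell out the logical bookkeeping (the case split in the contrapositive and the observation that $\nu(G \setminus \Gamma(G)) \neq \nu(G)$ forces strict inequality since $G \setminus \Gamma(G)$ is an induced subgraph), which the paper leaves implicit.
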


\begin{proof}
It is known that $\reg(I(G))=\nu(G)+1$ or $\reg(I(G))=\nu(G)+2$ due to Theorem \ref{unicyclicreg}.
Thus the proof follows directly from  Lemma \ref{unireg_1}, Theorem \ref{unireg_2} and Theorem \ref{regupper}.
\end{proof}

\begin{remark}
Let $G$ be a unicyclic graph with cycle $C_n$. 
If $G$ satisfies the conditions from Corollary \ref{unireg1},
then $\reg(I(G))>3$.
\end{remark}

Recall that regularity of a unicyclic graph $G$ is either $\nu(G)+1$ or $\nu(G)+2.$ Thereby, taking the contrapositive of  Corollary \ref{unireg1} completes the characterization of unicyclic graphs with regularity $\nu(G)+1.$

\begin{cor}\label{unireg2} Let $G$ be a unicyclic graph with cycle $C_n$. Then $\reg(I(G))=\nu(G)+1$ if and only if $n \equiv \{0,1\}~(mod~3) \textrm{ or }  \nu(G \setminus \Gamma(G)) < \nu(G) .$
\end{cor}

Application of Corollary \ref{unireg1} and Corollary \ref{unireg2} yields yet another
positive result, namely, a partial answer to a question posed by H{\`a}, \cite[Problem 6.3]{Ha2}.
 \begin{cor}\label{reg-3} Let $G$ be a unicyclic graph with cycle $C_n$. Then $\reg(I(G))=3$ if and only if $\nu(G)=2$ and  $n \equiv \{0,1\}~(mod~3)$ or $\nu(G \setminus \Gamma(G)) < \nu(G)$.
 \end{cor}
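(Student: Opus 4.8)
The plan is to feed the two characterizations already established, Corollary~\ref{unireg1} and Corollary~\ref{unireg2}, into the pinching bound $\nu(G)+1 \le \reg(I(G)) \le \nu(G)+2$ of Theorem~\ref{unicyclicreg}, and then to discard one degenerate case by a short matching argument. Since $\reg(I(G))=3$ forces $\nu(G)+1 \le 3 \le \nu(G)+2$, that is $\nu(G) \in \{1,2\}$, there are only two ways the value $3$ can be attained: either $\reg(I(G))=\nu(G)+1$ with $\nu(G)=2$, or $\reg(I(G))=\nu(G)+2$ with $\nu(G)=1$. I would analyse these two possibilities in turn.

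In the first possibility, $\reg(I(G))=\nu(G)+1$ together with $\nu(G)=2$ is, by Corollary~\ref{unireg2}, precisely equivalent to ``$\nu(G)=2$ and ($n \equiv \{0,1\}~(mod~3)$ or $\nu(G\setminus\Gamma(G))<\nu(G)$)'', which is exactly the right-hand side of the statement; conversely this right-hand side returns $\reg(I(G))=\nu(G)+1=3$ through the same corollary. Hence the entire content of the proof is to show that the second possibility, $\reg(I(G))=\nu(G)+2$ with $\nu(G)=1$, cannot occur for a connected unicyclic graph that is not a cycle. By Corollary~\ref{unireg1}, this possibility would in particular force $n \equiv 2~(mod~3)$, so it suffices to prove that $n \equiv 2~(mod~3)$ already implies $\nu(G)\ge 2$.

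This last implication is the crux of the argument, and it is here that the hypothesis that $G$ is not a cycle is indispensable: the statement genuinely fails for $C_5$ itself, where $\reg(I(C_5))=3$ while $\nu(C_5)=1$. I would split on the size of the cycle. If $n \equiv 2~(mod~3)$ and $n \ge 8$, then $n \ge 6$ and the edges $\{x_1,x_2\}$ and $\{x_4,x_5\}$ already form an induced matching of $C_n$ (no further edge joins these four vertices when $n \ge 6$), and since $C_n$ is an induced subgraph of $G$ we get $\nu(G)\ge\nu(C_n)\ge 2$. The only remaining value is $n=5$. Because $G$ is not a cycle, some tree is attached to a cycle vertex, which I may take to be $x_1$, yielding an edge $e=\{x_1,y\}$ with $y\in\Gamma(G)$ and $y\notin V(C_5)$. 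The cycle edge $\{x_3,x_4\}$ is disjoint from $e$, and neither $x_3$ nor $x_4$ is adjacent to $x_1$ or to $y$, so the induced subgraph on $\{x_1,y,x_3,x_4\}$ consists of exactly the two edges $e$ and $\{x_3,x_4\}$; thus $\{e,\{x_3,x_4\}\}$ is an induced matching and again $\nu(G)\ge 2$. In all cases $\nu(G)\ge 2$, contradicting $\nu(G)=1$, so the second possibility is vacuous and the characterization follows.
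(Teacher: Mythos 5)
Your proof is correct, and its skeleton is exactly the derivation the paper intends: the paper offers no written proof at all, stating only that the corollary is an ``application of Corollary~\ref{unireg1} and Corollary~\ref{unireg2}.'' What you add, and what makes your write-up more complete than the paper's, is the recognition that this application is not purely formal. Feeding the dichotomy $\reg(I(G)) \in \{\nu(G)+1,\nu(G)+2\}$ of Theorem~\ref{unicyclicreg} into the two corollaries leaves a second branch --- $\reg(I(G))=\nu(G)+2$ with $\nu(G)=1$ --- that must be shown to be vacuous, and this is where the paper's standing hypothesis (stated in the introduction) that $G$ is connected and \emph{not} a cycle is genuinely used: for $C_5$ itself one has $\reg(I(C_5))=3$ but $\nu(C_5)=1$, so the corollary as stated would fail. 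Your elimination of that branch is sound: Corollary~\ref{unireg1} forces $n \equiv 2 \pmod 3$, hence $n\geq 5$; for $n\geq 8$ the cycle already contains an induced matching of size two, and for $n=5$ the pendant edge $\{x_1,y\}$ guaranteed by ``not a cycle,'' together with $\{x_3,x_4\}$, forms an induced matching (unicyclicity prevents $y$ from being adjacent to $x_3$ or $x_4$). You also resolved the parsing ambiguity in the statement correctly: the condition must be read as $\nu(G)=2$ \emph{and} the disjunction, since the other reading is false (a graph with $\nu(G)=3$ and $\nu(G\setminus\Gamma(G))<\nu(G)$ has regularity $4$). In short: same route as the paper, but you supply the one nontrivial step the paper silently omits.
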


For a graph $G$ on $n$ vertices, let $W(G)$ be the \textit{whiskered graph} on $2n$ vertices obtained by
adding a pendent vertex (an edge to a new vertex of degree 1) to every vertex of $G$.

As a consequence of Corollary \ref{unireg2}, we derive the following result in \cite{MSY}.

\begin{cor}\label{whisker_remark}\cite[Proposition 1.1]{MSY}
If $G = W(C_n)$ for $n \geq 3,$ then $\reg(I(G))=\nu(G)+1$.
\end{cor}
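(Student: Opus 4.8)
The plan is to deduce this directly from the characterization in Corollary \ref{unireg2}, so that essentially no new algebra is needed: it suffices to verify the purely combinatorial condition ``$n \equiv 0,1 \pmod 3$ or $\nu(G \setminus \Gamma(G)) < \nu(G)$'' for $G = W(C_n)$. Note first that $W(C_n)$ is connected, unicyclic, and not a cycle for $n \ge 3$, so Corollary \ref{unireg2} indeed applies.

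First I would unwind the Notation of Subsection \ref{notation} for $G = W(C_n)$. Writing $C_n : x_1 \cdots x_n$ and letting $y_i$ be the whisker attached at $x_i$, every cycle vertex is the root of a rooted tree, and that tree is the single edge $T_i = \{x_i, y_i\}$. Hence the unique neighbour of the root $x_i$ inside $T_i$ is $y_i$, so $\Gamma(G) = \{y_1, \dots, y_n\}$, and each $H_i = T_i \setminus \{y_i\}$ is the isolated vertex $x_i$ with no edges, giving $\nu(H_i) = 0$. By Equation \ref{indequal} we therefore get $\nu(G \setminus \Gamma(G)) = \nu(C_n) = \lfloor n/3 \rfloor$; in other words, deleting $\Gamma(G)$ just strips off all whiskers and leaves the bare cycle.

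It then remains to produce, for $n \ge 4$, an induced matching of $G$ strictly larger than $\nu(C_n)$ (the case $n = 3$ being immediate, since $3 \equiv 0 \pmod 3$ triggers the first condition). I would take a maximum independent set $S$ of $C_n$, which has $|S| = \lfloor n/2 \rfloor$, and consider the whiskers $\{x_i, y_i\}$ with $x_i \in S$. These form a matching, and in fact an induced matching: on their vertex set the only possible edges are cycle edges $\{x_i, x_j\}$ with $x_i, x_j \in S$, which are excluded because $S$ is independent, together with edges incident to some $y_j$, which cannot occur since each $y_j$ is a leaf adjacent only to $x_j$. Hence $\nu(G) \ge \lfloor n/2 \rfloor$. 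Since $\lfloor n/2 \rfloor > \lfloor n/3 \rfloor$ for every $n \ge 4$, we obtain $\nu(G \setminus \Gamma(G)) = \lfloor n/3 \rfloor < \nu(G)$, so the second condition of Corollary \ref{unireg2} holds. In all cases the hypothesis of Corollary \ref{unireg2} is met, and therefore $\reg(I(G)) = \nu(G) + 1$.

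The argument is entirely elementary, so there is no serious obstacle; the only points requiring care are applying the Notation correctly (checking that $G \setminus \Gamma(G)$ really is the bare cycle and that the isolated roots $x_i$ contribute nothing to the induced matching number) and verifying that the whiskers over an independent set genuinely form an \emph{induced} matching rather than merely a matching -- which is exactly where the leaf property of the $y_i$ is used. The elementary inequality $\lfloor n/3 \rfloor < \lfloor n/2 \rfloor$ for $n \ge 4$ (equivalently, that a cycle's induced matching number $\lfloor n/3 \rfloor$ is dominated by its independence number for $n \ge 4$) then closes the gap.
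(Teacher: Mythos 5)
Your proof is correct and follows essentially the same route as the paper: both deduce the statement from Corollary \ref{unireg2}, using the congruence condition in one case and the inequality $\nu(G \setminus \Gamma(G)) = \nu(C_n) < \nu(G)$ in the other. The only difference is cosmetic -- the paper splits on $n \bmod 3$ and merely asserts the inequality ``by observation'' when $n \equiv 2 \pmod 3$, whereas you split off $n=3$ and justify the inequality for all $n \geq 4$ via the explicit induced matching of whiskers over a maximum independent set of $C_n$.
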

\begin{proof} If $n \equiv \{0,1\}~ (mod~3)$, then by Corollary \ref{unireg2}, $\reg(I(G))=\nu(G)+1$.
If  $n \equiv 2~(mod~3)$,
 then we can observe that $\nu(G \setminus \Gamma(G))=\nu(C_n)<\nu(G)$. Therefore by Corollary \ref{unireg2},
$\reg(I(G))=\nu(G)+1$.
\end{proof}

\section{Regularity bounds for colon ideals of cycles}\label{reg_path_cycle}

Let $\widetilde{G}$ denote the graph associated to edge ideal $(I(G)^{s+1}:M)^{\pol}$ 
where $M$ is a minimal monomial generator of $I(G)^s$ for some $s\geq 1.$ 
In this section, we investigate the regularity of $\widetilde{G}\cup F$ where $F$ is a forest 
attached to $G$ at some of its vertices.
In particular, we consider the cases when $G$ is a path and a cycle.
Furthermore, we obtain an upper bound on regularity of $ \widetilde{G}\cup F$ in terms of the 
induced matching number of $G \cup F.$ These bounds are interesting on their own but they will also 
be used later in the proofs of our main result.

Let $C_n$ be a cycle with vertices $x_1,\ldots, x_n$ (in order)  
and $M$ be a minimal monomial generator of $I(C_n)^s$ for $s \geq 1.$ 
In order to compute regularity of powers of cycles, authors of \cite{BHT} studied generators of 
$(I(C_n)^{s+1}:M)$ and bounded its regularity in terms of induced matching of $C_n.$  We start the section by rephrasing couple relevant results from the proof of 
Theorem 5.2 in \cite{BHT}.  Motivated by these results, we start developing the main machinery of this section.

\begin{remark}\label{obsEvenCn}
Let $J$ be the polarization of $(I(C_n)^{s+1}:M)$ and $\widetilde{C_n}$ be the graph associated to $J.$ It is known due to Theorem \ref{even_connec_equivalent} that $E(C_n) \subseteq E(\widetilde{C_n})$ and all the remaining edges of $\widetilde{C_n}$ come from even connections. In particular, $\{u,v\} \in  E(\widetilde{C_n})$ when $u$ and $v$ are even-connected with respect to $M$, and whiskers $\{x_{i_j},z_j\} \in  E(\widetilde{C_n})$ where $z_j$ is a new variable obtained by polarizing $x_{i_j}^2 $ if $x_{i_j}^2 \in  (I(C_n)^{s+1}:M).$  Note that $u \neq v$ in this setting.

Let $C_n^{\even}$ be the graph with all such even-connected edges $\{u,v\}.$  It is shown in \cite[ Theorem 5.2]{BHT} that deleting whisker does not change the regularity and
$$ \reg (\widetilde{C_n})= \reg (C_n \cup C_n^{\even}) \leq \nu(C_n)+1.$$
\end{remark}

Understanding the new edges coming from even-connections in a graph plays an essential role in computing the regularity of powers of an edge ideal (see \cite{Banerjee, JNS}). The following result considers the case when the even-connection paths in $G$ are independent from a subgraph of $G.$

\begin{lem}\label{EvenUnion}Let $G_1,$ $G_2$ be subgraphs of $G$ such that $E(G_1) \cup E(G_2) =E(G)$ and $E(G_1) \cap E(G_2) =\emptyset.$  Suppose $M$ is a minimal monomial generator of $I(G_1)^s$ for $s \geq 1$ such that none of the vertices in $G_2$ divides $M.$ Then
$$(I(G)^{s+1}:M) =(I(G_1)^{s+1}:M)+I(G_2).$$
\end{lem}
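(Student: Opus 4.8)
The plan is to prove the set equality $(I(G)^{s+1}:M) = (I(G_1)^{s+1}:M) + I(G_2)$ by establishing containment in both directions, relying on the combinatorial description of colon ideal generators provided by Theorem \ref{even_connec_equivalent}. First I would note that since $M$ is a minimal generator of $I(G_1)^s$ and the edges of $G_1$ and $G_2$ partition $E(G)$ with $G_1, G_2$ being induced subgraphs, $M$ is also a minimal generator of $I(G)^s$; this lets me apply Theorem \ref{even_connec_equivalent} to conclude that $(I(G)^{s+1}:M)$ is generated in degree $2$ by the edges of $G$ together with monomials $uv$ where $u,v$ are even-connected with respect to $M$ in $G$.

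For the reverse containment $(I(G_1)^{s+1}:M) + I(G_2) \subseteq (I(G)^{s+1}:M)$, I would argue that $I(G_2) \subseteq (I(G)^{s+1}:M)$ because for any edge $\{u,v\} \in E(G_2) \subseteq E(G)$, multiplying $uv \cdot M$ gives an element of $I(G_1)^s \cdot I(G) \subseteq I(G)^{s+1}$; similarly any generator of $(I(G_1)^{s+1}:M)$ comes from an edge or even-connection in $G_1$, and since $G_1$ is an induced subgraph of $G$ every such even-connection in $G_1$ is also an even-connection in $G$ (the path and its edge-multiplicity conditions transfer verbatim), so $(I(G_1)^{s+1}:M) \subseteq (I(G)^{s+1}:M)$.

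The forward containment $(I(G)^{s+1}:M) \subseteq (I(G_1)^{s+1}:M) + I(G_2)$ is where the hypothesis that \emph{no vertex of $G_2$ divides $M$} does the essential work, and I expect this to be the main obstacle. By Theorem \ref{even_connec_equivalent} it suffices to show each degree-$2$ generator $uv$ of $(I(G)^{s+1}:M)$ lies in the right-hand side. If $\{u,v\} \in E(G)$, then it is either in $E(G_2)$ (hence in $I(G_2)$) or in $E(G_1)$ (hence a generator of $(I(G_1)^{s+1}:M)$ since $G_1$-edges are automatically colon generators). The delicate case is when $uv$ arises from an even-connection $p_0 p_1 \cdots p_{2k+1}$ with respect to $M$ in $G$: here I would use that the internal odd-position edges $p_{2l+1}p_{2l+2}$ must equal some $e_i$ dividing $M$, and since $M$ is a product of edges of $G_1$ and no vertex of $G_2$ divides $M$, these edges all lie in $G_1$.

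The key point to push through is that the \emph{entire} even-connecting path stays inside $G_1$: each odd-step edge is forced into $G_1$ by the divisibility-of-$M$ condition, and I would argue the intermediate even-step edges $p_r p_{r+1}$ (which are merely required to be edges of $G$) must also lie in $G_1$, because their endpoints are adjacent to vertices appearing in edges of $M \subseteq I(G_1)$, and any vertex of $G_2$ lying on such a path would force a vertex of $G_2$ into the support of an edge of $M$, contradicting the hypothesis. Once the whole path is confined to $G_1$, the even-connection is an even-connection in $G_1$, so $uv$ is a generator of $(I(G_1)^{s+1}:M)$ by Theorem \ref{even_connec_equivalent} applied to $G_1$, completing the forward containment and hence the proof.
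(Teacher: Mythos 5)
Your overall strategy is the same as the paper's: both directions rest on Theorem \ref{even_connec_equivalent}, the degree-two generators coming from actual edges are handled by the partition $E(G)=E(G_1)\cup E(G_2)$, and the heart of the matter is confining any even-connection with respect to $M$ to the subgraph $G_1$, using the fact that the internal path vertices $p_1,\dots,p_{2r}$ all divide $M$ and hence, by hypothesis, cannot lie in $V(G_2)$. (Your preliminary observation that $M$ is also a minimal generator of $I(G)^s$ is correct, though the real reason is simply that $I(G)^s$ is generated in the single degree $2s$, so no product of $s$ edges can properly divide another; it has nothing to do with induced-ness.)

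There is, however, one step that fails as literally stated, and it is the step you yourself single out as the key point: the claim that ``any vertex of $G_2$ lying on such a path would force a vertex of $G_2$ into the support of an edge of $M$, contradicting the hypothesis.'' This is false at the two endpoints of the path: $u$ or $v$ may perfectly well lie in $V(G_1)\cap V(G_2)$ without any contradiction. The paper's own example following Lemma \ref{EvenCycle1} exhibits this: there $G_1=C_5$, $G_2=F$, $M=x_3x_4$, and the path $x_5x_4x_3x_2$ even-connects $x_5$ and $x_2$, both of which are vertices of the forest $G_2$ (they are roots of attached trees), while the hypothesis of the lemma holds. What you actually need, and what is true, is the weaker statement that every path vertex lies in $V(G_1)$. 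For the internal vertices this is exactly your divisibility argument; for an endpoint, say $u$, one argues instead that the edge $up_1$ lies in $E(G_1)\cup E(G_2)$ and cannot lie in $E(G_2)$, because its other endpoint $p_1$ divides $M$ and so $p_1\notin V(G_2)$; hence $up_1\in E(G_1)$ and $u\in V(G_1)$. Once all path vertices are in $V(G_1)$, every path edge is an edge of $G$ with both endpoints in $V(G_1)$, hence an edge of $G_1$ because $G_1$ is an \emph{induced} subgraph, and the even-connection lives in $G_1$ as desired. This corrected case analysis is precisely what the paper does (phrased contrapositively: if $u\in V(G_2)\setminus V(G_1)$, then $p_1\in V(G_2)$ would divide $M$, a contradiction), so your proof is repaired by replacing the blanket ``no $G_2$-vertex on the path'' claim with the endpoint argument above.
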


\begin{proof} The statement holds trivially when $E(G_2)= \emptyset.$ Thus we may assume that $E(G_2) \neq \emptyset.$ It is clear that $(I(G_1)^{s+1}:M) + I(G_2) \subseteq (I(G)^{s+1}: M)$ by Theorem \ref{even_connec_equivalent}.  
If $uv$ is a minimal generator of $(I(G)^{s+1}: M)$, then either $\{u,v\} \in E(G)$ or
$u$ and $v$ are even-connected with respect to $M.$ If $\{u,v\} \in E(G)$, then $uv \in (I(G_1)^{s+1}:M) + I(G_2)$ by Theorem \ref{even_connec_equivalent}.  

Let $M=e_1\cdots e_s$ where $e_1, \ldots, e_s \in E(G_1)$. Suppose $u$ and $v$ are even-connected in $G$ with respect to $M$.  Let $u=p_0p_1\cdots p_{2r}p_{2r+1}=v$
be an even-connection in $G$ such that $p_{2l+1}p_{2l+2}=e_i$ for some $1 \leq i \leq s$ and $0 \leq l \leq r-1.$ 
Note that each $p_i$ divides $M$ for $1 \leq i \leq 2r$ while none of the vertices of $G_2$ divides $M,$ thus $p_i \in V(G_1) \setminus V(G_2)$ for all $1 \le i \le 2r.$ If $u \in V(G_2) \setminus V(G_1),$ then $p_0p_1$ is an edge of $G$ but neither an edge of $G_1$ or $G_2$  which is a contradiction to the assumption that $E(G)= E(G_1) \cup E(G_2).$ It can be shown similarly that $v \in V(G_1).$ Thus $u$ and $v$ are even-connected in $G_1$ with respect to $M$ and the equality holds.

\end{proof}

The following example shows that Lemma \ref{EvenUnion} is no longer true if 
$G_2$ has a vertex which divides $M$.

\begin{example}
 Let $G$ be the graph as shown in Figure \ref{fig:runningex}. Let 
 $G_1$ and $G_2$ be the subgraphs of $G$ with $E(G_1)=\Big \{ \{x_1,x_2\},\{x_2,x_3\},
 \{x_3,x_4\},\{x_4,x_5\},\{x_1,x_5\}\Big \}$ and
 $E(G_2)=\Big \{ \{x_1,y_1\},\{x_1,y_2\},\{x_2,y_3\},\{x_2,y_6\},\{y_3,y_4\},\{y_4,y_5\},\{x_5,y_7\} \Big \}$
 respectively. Set $M=x_1x_5$. Then $y_2y_7 \in (I(G)^2:M)$ but $y_2y_7 \notin (I(G_1)^2:M)+I(G_2)$.
\end{example}

If $G_1$ is a path and $G_2$ is a forest in the statement of Lemma \ref{EvenUnion}, we can bound the regularity of $(I(G)^{s+1}:M)$ by the induced matching of $G.$

\begin{lem}\label{EvenPath} Let $P_n$ be a path on n-vertices and $F$ be a forest attached to $P_n$ on some of its vertices. Let $M$ be a minimal monomial generator of $I(P_n)^s$ for some $s \geq 1$ and $\widetilde{P_n}$ denote the associated graph to $(I(P_n)^{s+1}:M)^{\pol}.$ Suppose that none of the roots of $F$ divides $M.$ Then
$$ \reg (\widetilde{P_n} \cup F) \leq \nu(P_n \cup F) +1.$$
\end{lem}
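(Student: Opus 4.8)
The plan is to reduce the statement to a purely combinatorial regularity bound for an explicit graph built from the path, and then to run an induction on the length of the path (and the size of $F$) using the deletion inequality in Theorem \ref{Inequalities}(2). First I would pin down exactly which graph $\widetilde{P_n}\cup F$ is. Writing $G=P_n\cup F$ and applying Lemma \ref{EvenUnion} with $G_1=P_n$ and $G_2=F$ — legitimate because $M$ only involves vertices of $P_n$, so the only vertices of $F$ that could divide $M$ are its roots, which by hypothesis do not — I obtain
$$(I(G)^{s+1}:M)=(I(P_n)^{s+1}:M)+I(F).$$
Every square occurring in $(I(P_n)^{s+1}:M)$ involves a vertex of $P_n$, while $I(F)$ is already squarefree, so polarization distributes over this sum and, by Corollary \ref{betti}, I may work with the graph $\widetilde{P_n}\cup F$ directly. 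By Theorem \ref{even_connec_equivalent} its edges are those of $P_n$, those of $F$, the even-connection edges of the path, and whiskers $\{x_i,z_i\}$ arising from squares $x_i^2$. As in Remark \ref{obsEvenCn}, each $z_i$ is a leaf attached to a path vertex, and deleting these whiskers does not change the regularity; hence it suffices to bound $\reg(\Gamma)$, where $\Gamma:=(P_n\cup P_n^{\even})\cup F$ and $P_n^{\even}$ is the graph of even-connection edges.

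The structural observation I would establish next is that every even-connection edge $\{x_a,x_b\}$ has both endpoints adjacent in $P_n$ to $\supp(M)$: in a defining walk $p_0p_1\cdots p_{2k+1}$ the interior vertices $p_1,\dots,p_{2k}$ all divide $M$, so the new edges are confined to the union of the intervals spanned by $\supp(M)$ together with their immediate path-neighbors, and away from this region $\Gamma$ coincides with $P_n\cup F$. With this in hand I would induct, peeling a leaf of $F$ when $F\neq\emptyset$ and otherwise reducing to the pure-path claim $\reg(\widetilde{P_n})\le\nu(P_n)+1$, which itself is handled by induction on $n$ via deletion of an endpoint. In each step, choosing a suitable vertex $x$ and applying Theorem \ref{Inequalities}(2),
$$\reg(\Gamma)\le\max\{\reg(\Gamma\setminus x),\ \reg(\Gamma\setminus N_\Gamma[x])+1\},$$
I would identify $\Gamma\setminus x$ and $\Gamma\setminus N_\Gamma[x]$ as (whiskerless even-connection graphs of) a shorter path carrying an attached forest whose roots still do not divide the restricted monomial, so the inductive hypothesis applies to both.

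For the matching bookkeeping, the first branch contributes at most $\nu(P_n\cup F)+1$ since deleting a vertex cannot increase $\nu$. For the second branch I would use Observation \ref{indmatch} together with additivity of $\nu$ over disjoint unions: if the deletion reduces the induced matching number by at least one, then
$$\reg(\Gamma\setminus N_\Gamma[x])+1\le\bigl(\nu(P_n\cup F)-1+1\bigr)+1=\nu(P_n\cup F)+1,$$
which yields the asserted bound. The base cases, where $P_n$ is short or $F$ is empty and $M$ is trivial, reduce to $\reg(I(P_n))=\nu(P_n)+1$.

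The main obstacle I anticipate is the second branch. Because of the even-connection edges, the neighborhood $N_\Gamma[x]$ is strictly larger than $N_{P_n\cup F}[x]$, and I must verify two things at once: that $\Gamma\setminus N_\Gamma[x]$ retains the self-similar form required for the induction (that is, no stray even-connection edge survives to bridge the two sides across a deleted support vertex, so the restricted even-connections computed intrinsically on the smaller configuration agree with the induced ones), and that deleting this enlarged neighborhood forces $\nu$ to drop by at least one relative to $\nu(P_n\cup F)$. Showing that the extra edges produced by even-connection never inflate the induced matching number past $\nu(P_n\cup F)$ is precisely the analogue of the cycle estimate $\reg(\widetilde{C_n})\le\nu(C_n)+1$ recorded in Remark \ref{obsEvenCn}, and it is the technical heart of the argument.
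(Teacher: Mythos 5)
Your opening reduction is exactly right and matches the paper: by Lemma \ref{EvenUnion} (applicable since the only vertices of $F$ that could divide $M$ are its roots), $(I(P_n\cup F)^{s+1}:M)=(I(P_n)^{s+1}:M)+I(F)$, and polarization preserves regularity by Corollary \ref{betti}. But at this point the paper and you part ways, and your route has a genuine gap. The paper observes that $M$ is also a minimal generator of $I(P_n\cup F)^s$ and that $P_n\cup F$ is a \emph{forest}, so the entire graph $\widetilde{P_n}\cup F$ is the polarized colon ideal of a power of a forest's edge ideal; the bound then follows in one line from \cite[Corollary 4.12 (2)]{JNS} (regularity of such colon ideals is at most $\reg$ of the forest) together with \cite[Theorem 2.18]{Zheng}. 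You instead propose to prove the needed bound from scratch by induction, peeling leaves and applying Theorem \ref{Inequalities}(2) — and you never actually close that induction. The pure-path base case, $\reg(\widetilde{P_n})\le\nu(P_n)+1$, which you dispatch with ``handled by induction on $n$ via deletion of an endpoint,'' is precisely the content of \cite[Corollary 4.12 (2)]{JNS} for paths; it is a substantive theorem, not a routine endpoint-deletion argument, and no proof of it appears in your sketch.

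The second unresolved point is the one you yourself flag as ``the technical heart'': in the inductive step you must show that $\Gamma\setminus x$ and $\Gamma\setminus N_\Gamma[x]$ are again (induced subgraphs of) even-connection graphs of shorter paths with respect to a suitably restricted monomial, so that the inductive hypothesis applies, and simultaneously that the enlarged neighborhood $N_\Gamma[x]$ still forces the induced matching number of the base graph to drop. Neither claim is established; you only describe what would need to be true. The paper's own proof of the cycle analogue (Lemma \ref{EvenCycle1}) shows how delicate this is: there one defines $\widetilde{P}$ via the restricted product $M'=g_1\cdots g_t$ of edges surviving in the smaller path, proves only an induced-subgraph containment $G'\subseteq\widetilde{P}$ using \cite[Observation 6.4]{Banerjee} to rule out even-connections passing through the deleted region, and then invokes monotonicity of regularity — and crucially, that argument still bottoms out by citing the present lemma and \cite[Corollary 4.12 (2)]{JNS}, not by re-deriving them. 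So your proposal, as written, assumes (in two places) the very estimate it is supposed to prove; it is a plan whose core steps remain open, whereas the paper's proof avoids them entirely by folding $F$ into the colon ideal of the forest $P_n\cup F$ and quoting the known forest bound.
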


\begin{proof}  If $E(F)= \emptyset$, we have 
$$\reg (I(P_n)^{s+1}:M) \leq \reg (I(P_n))=\nu(P_n)+1$$
 by \cite[Corollary 4.12 (2)]{JNS} and \cite[Theorem 2.18]{Zheng}. Thus the statement holds.
 
Suppose that  $E(F) \neq \emptyset$.  It follows from Lemma \ref{EvenUnion} that
$$(I(P_n)^{s+1}:M)+I(F)= (I(P_n \cup F)^{s+1}:M).$$
Thus $$ I(\widetilde{P_n} \cup F)= (I(P_n)^{s+1}:M)^{\pol}+I(F)= (I(P_n \cup F)^{s+1}:M)^{\pol}.$$

Note that $\reg (I(P_n \cup F)^{s+1}:M)^{\pol}= \reg (I(P_n \cup F)^{s+1}:M).$ Since $P_n \cup F$ is a forest, we have 
$$ \reg (\widetilde{P_n} \cup F) = \reg (I(P_n \cup F)^{s+1}:M) \leq \reg (I(P_n \cup F))=\nu(P_n \cup F)+1$$
by \cite[Corollary 4.12 (2)]{JNS} and \cite[Theorem 2.18]{Zheng}.

\end{proof}

Similarly, if $G_1$ is a cycle and $G_2$ is a forest in the statement of Lemma \ref{EvenUnion}, the regularity of $(I(G)^{s+1}:M)$ can be bounded by the induced matching of $G.$ 

\begin{lem}\label{EvenCycle1} Let $C_n$ be a cycle on the vertices $x_1, \ldots, x_n$  (in order) and $F$ be a forest attached to $C_n$ on some of its vertices such that $C_n \cup F$ is a unicyclic graph. Let $M$ be a minimal monomial generator of $I(C_n)^s$ for some $s \geq 1$ and $\widetilde{C_n}$ denote the associated graph to $(I(C_n)^{s+1}:M)^{\pol}.$ Suppose that none of the roots of $F$ divides $M.$ Then
$$ \reg (\widetilde{C_n} \cup F) \leq \nu(C_n \cup F) +1.$$
\end{lem}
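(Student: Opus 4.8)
The plan is to mimic the proof of Lemma \ref{EvenPath}, the only new difficulty being that $C_n\cup F$ is unicyclic rather than a forest. Write $G_0=C_n\cup F$. Since $M=e_1\cdots e_s$ is a product of edges of $C_n$ and, by hypothesis, no root of $F$ divides $M$, no vertex of the induced subgraph $F$ divides $M$; hence Lemma \ref{EvenUnion} applies and gives $(I(G_0)^{s+1}:M)=(I(C_n)^{s+1}:M)+I(F)$, so that $\Theta:=\widetilde{C_n}\cup F$ is exactly the graph associated to $(I(G_0)^{s+1}:M)^{\pol}$. I would prove $\reg(\Theta)\le\nu(G_0)+1$ by induction on $|E(F)|$, the base case $E(F)=\emptyset$ being precisely the cycle bound $\reg(\widetilde{C_n})\le\nu(C_n)+1$ of Remark \ref{obsEvenCn}.

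For the inductive step I would pick a leaf $w$ of $F$ at maximal distance from the cycle, let $v$ be its unique neighbour, and split on $v$ via Theorem \ref{Inequalities}(2):
\[
\reg(\Theta)\le\max\bigl\{\reg(\Theta\setminus v),\ \reg(\Theta\setminus N_\Theta[v])+1\bigr\}.
\]
The organising principle is whether a deletion destroys the cycle. If the deleted vertices avoid $C_n$, the survivor is again $\widetilde{C_n}\cup F'$ for a forest $F'$ with fewer edges (the cycle and $M$ untouched), and the induction hypothesis applies; since $F'$ is an induced subgraph of $G_0$ this gives the bound $\le\nu(G_0)+1$. If the deleted set contains a root $x_i$ of $F$, the cycle is broken, and here the hypothesis that roots do not divide $M$ lets me identify the survivor with a genuine path-colon graph and invoke Lemma \ref{EvenPath}. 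Thus $\Theta\setminus v$ is bounded either by the induction hypothesis (when $v\notin V(C_n)$) or, when $v=x_i$ is itself a root, by Lemma \ref{EvenPath} after deleting the single vertex $x_i$ from the cycle; in both cases by $\le\nu(G_0)+1$.

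For the second term I would use Observation \ref{indmatch}: as $w$ is a leaf of $G_0$ with neighbour $v$, we have $\nu(G_0\setminus N_{G_0}[v])\le\nu(G_0)-1$. Since $N_\Theta[v]\supseteq N_{G_0}[v]$, the graph $\Theta\setminus N_\Theta[v]$ is an induced subgraph of $\Theta\setminus N_{G_0}[v]$, so by Theorem \ref{Inequalities}(1) it suffices to bound the latter. Deleting $N_{G_0}[v]$ breaks the cycle precisely at the root(s) it contains, together with the two cycle-neighbours $x_{i\pm1}$ in the case $v=x_i$; granting the identifications below, $\Theta\setminus N_{G_0}[v]$ is then a path-colon graph attached to the forest $G_0\setminus N_{G_0}[v]$, whose roots still fail to divide the relevant subproduct of $M$, so Lemma \ref{EvenPath} yields $\reg(\Theta\setminus N_{G_0}[v])\le\nu(G_0\setminus N_{G_0}[v])+1\le\nu(G_0)$. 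Taking the maximum gives $\reg(\Theta)\le\nu(G_0)+1=\nu(C_n\cup F)+1$.

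The step I expect to be the main obstacle is justifying that these vertex-deleted graphs are honest path even-connection graphs, not merely induced subgraphs of $\widetilde{C_n}$. The combinatorial core is that in any even connection $p_0p_1\cdots p_{2k+1}$ every interior vertex $p_r$ ($1\le r\le 2k$) is incident to one of the edges $e_i$, hence divides $M$; therefore a vertex not dividing $M$ — in particular a root $x_i$ — can occur only as an endpoint of an even connection, never in its interior. Consequently no even connection among the remaining vertices passes through $x_i$, so deleting $x_i$ turns $\widetilde{C_n}$ into $\widetilde{P_{n-1}}$ with $M$ now a generator of $I(P_{n-1})^s$. For the closed-neighbourhood term one further deletes the two cycle-neighbours of $x_i$, which are exactly the degree-one endpoints of $P_{n-1}$; here a short splicing argument is needed, showing that any even connection making a pendant detour onto such an endpoint can be shortened by removing two consecutive steps (which preserves the parity and the alternation of product edges), so that deleting these endpoints again produces a genuine path even-connection graph $\widetilde{\hat P}$ with $\hat P=C_n\setminus\{x_{i-1},x_i,x_{i+1}\}$. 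Granting these two identifications, every graph arising above is a path-colon graph of the type handled by Lemma \ref{EvenPath}, and the induction closes.
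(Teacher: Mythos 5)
Your proposal is correct, and its skeleton is the paper's: induction on $|E(F)|$ with base case Remark \ref{obsEvenCn}, a split at a leaf of the forest, and reduction of every cycle-breaking case to the path bound of Lemma \ref{EvenPath}, with Observation \ref{indmatch} absorbing the extra $+1$. The genuine difference lies in the step you correctly single out as the main obstacle. The paper deletes the closed neighbourhood taken in the \emph{colon} graph, $N_{\widetilde{C_n}}[x]$, so all even-connection neighbours of $x$ disappear, and then it only needs the survivor to be an \emph{induced subgraph} of a path-colon graph $\widetilde{P}$; the verification is that an even connection between survivors cannot use the edges $x_2x_3$ or $x_{n-1}x_n$, since by \cite[Observation 6.4]{Banerjee} that would even-connect $x_1$ to one of its endpoints, contradicting their survival. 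You instead delete only $N_{G_0}[v]$, pass to it by monotonicity (valid: $N_\Theta[v]\supseteq N_{G_0}[v]$, then Theorem \ref{Inequalities}(1)), and must therefore prove the stronger statement that the survivor \emph{is} a path-colon graph; this is exactly your splicing argument, which replaces the citation of Banerjee's observation. That argument is sound: a root does not divide $M$, so it can occur only as an endpoint of an even connection (every interior vertex divides $M$), hence survivors' even connections meet $x_{i\pm1}$ only in pendant detours, and excising a detour removes one odd-position and one even-position copy of the same product edge, preserving all four conditions of Definition \ref{even_connected} and leaving only product edges that are factors of the subproduct $M'$. The paper's route is shorter because containment suffices and the key observation is quoted rather than reproved; yours is self-contained at this point and yields the sharper structural fact (equality with a path-colon graph), at the cost of the splicing verification and the minor bookkeeping that $M$, resp.\ $M'$, is still a minimal generator of the relevant power of the path ideal (automatic, since all $s$-fold products of edges have degree $2s$).
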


\begin{proof} If $E(F)= \emptyset,$ the statement is clear by Remark \ref{obsEvenCn}. Assume that $E(F) \neq \emptyset.$
We use induction on $k:=|E(F)|$ where $k \geq 1.$ 

If $k=1,$ there must be a leaf, say $y,$ in $C_n\cup F$ with its unique neighbor, say $x.$ 
Note that $y$ is a leaf in $\widetilde{C_n} \cup F$ by Lemma \ref{EvenUnion}. Without loss of generality, we may assume that $x=x_1.$ It follows from \cite[Lemma 3.25]{biyikoglu_civan} that
\begin{eqnarray}\label{claimeq}
\reg (\widetilde{C_n} \cup F) = \max\{\reg (\widetilde{C_n}),~\reg (\widetilde{C_n} \setminus N_{\widetilde{C_n}} [x_1])+1\}.
\end{eqnarray}

It follows from Remark \ref{obsEvenCn} that $\reg  (\widetilde{C_n})\leq  \nu(C_n)+1.$ Thus $\reg  (\widetilde{C_n})\leq  \nu(C_n \cup F)+1$ as $C_n$ induced subgraph of $C_n \cup F.$

Let $G:=\widetilde{C_n} \setminus N_{\widetilde{C_n}} [x_1]$ and  $P:= C_n \setminus N_{C_n} [x_1].$ 
Note that $P$ is the path on the vertices $x_3, \ldots, x_{n-1}$ (in order). 
Let $\{g_1, \ldots, g_t\}$ be the collection of edges of $P$ that appear in $M$ and $M':=   g_1\ldots g_t.$
Consider the graph associated to $(I(P)^{t+1}:M')^{\pol}$ and denote this graph by $\widetilde{P}.$ We have the following useful inequality:
\begin{eqnarray}\label{eqn:path}
\begin{array}{lcll}
    \reg(\widetilde{P})&\leq &\reg(P)& \mbox{ (by \cite[Corollary 4.12 (2)]{JNS})}  \\
	& = & \nu(P)+1 & \mbox{ (by \cite[Theorem 2.18]{Zheng})}\\
    &\leq &\nu(C_n \cup F). & \mbox{ (by Observation \ref{indmatch})}
    \end{array}
\end{eqnarray}

\begin{claim}\label{claim1}  $G$ is an induced subgraph of $\widetilde{P},$ i.e., 
\begin{enumerate}
\item $V(G) \subseteq V(\widetilde{P})$ and
\item For $x_i,x_j \in V(G),$   $x_ix_j \in E(\widetilde{P})$ if and only if $ x_ix_j \in E(G).$
\end{enumerate}
\end{claim}

\begin{proof} It is clear that $V(G) \subseteq V(\widetilde{P}),$ thus $(1)$ holds. Suppose that $x_i, x_j \in V(G).$

First assume that $x_ix_j \in E(\widetilde{P}).$ Recall from Theorem \ref{even_connec_equivalent} that $x_ix_j \in E(P)$ or $x_i$ and $x_j$ are even-connected  in $P$ with respect to $M'.$ If $x_ix_j \in E(P),$ then $j=i+1$ as $P$ is the path on the vertices $x_3, \ldots, x_{n-2}$ and $x_ix_{i+1} \in E(C_n) \subseteq E(\widetilde{C_n}).$ Since $x_i, x_{i+1} \notin  N_{\widetilde{C_n}} [x_1],$ the edge $x_ix_{i+1}$ is preserved  in $G$ after the deletion of  $N_{\widetilde{C_n}} [x_1].$  Suppose that $x_ix_j \notin E(P).$ Then  $x_i$ and $x_j$ are even-connected  in $P$ with respect to $M'.$  This implies that $x_i$ and $x_j$ are even-connected in $C_n$ with respect to $M$ and  $x_ix_j \in E(\widetilde{C_n})$ where $x_i,x_j \notin N_{\widetilde{C_n}} [x_1].$   Thus $x_ix_j \in E(G).$

For the reverse direction, assume that $x_ix_j \in E(G).$ Then $x_ix_j \in E(C_n)$ or $x_i$ and $x_j$ are even-connected in $C_n$ with respect to $M$ whereas $x_i,x_j \notin N_{\widetilde{C_n}} [x_1].$  If $x_ix_j \in E(C_n),$ then $x_ix_j \in E(P)$ as $x_i, x_j  \notin N_{C_n} [x_1].$ If  $x_i$ and $x_j$ are even-connected in $C_n$ with respect to $M,$ then $x_2x_3$ and $x_{n-1}x_n$ can not appear on an even-connection path between $x_i$ and $x_j.$ Otherwise, $x_i$ or $x_j \in N_{\widetilde{C_n}} [x_1]$ by \cite[Observation 6.4]{Banerjee}, a contradiction.  Thus $x_i$ and $x_j$ are even-connected in $P$ with respect to $M'$ and $x_ix_j \in E(\widetilde{P}).$  Hence $(2)$ holds.
\end{proof}

Observe that $\reg(G) \leq \reg (\widetilde{P}) \leq \nu(C_n \cup F)$ by  Theorem \ref{Inequalities} and Equation (\ref{eqn:path}). Hence Equation (\ref{claimeq}) indicates that  $\reg (\widetilde{C_n} \cup F) \leq \nu(C_n \cup F) +1$ for $k=1.$

Suppose that $ k>1.$ Let $G:= \widetilde{C_n}\cup F.$ Then there exists a leaf $y$ in $C_n\cup F$  with its unique neighbor, say $x,$ and let $e:=\{x, y\} \in E(F).$  
It follows from \cite[Lemma 3.25]{biyikoglu_civan} that
\begin{eqnarray*}\label{equation08}
\reg (G) = \max\{\reg (G \setminus e),~\reg (G  \setminus N_{G} [x])+1\}.
\end{eqnarray*}

Note that $G \setminus e= \widetilde{C_n} \cup (F \setminus e)$  is an induced subgraph of $\widetilde{C_n} \cup F.$ Thus application of the induction hypothesis to $G \setminus e$ results with the following inequality.  
$$\reg(G \setminus e)\leq \nu(C_n \cup (F \setminus e))+1\leq \nu(C_n\cup F)+1.$$

Let $H:= G \setminus N_{G} [x].$ It suffices to show that $\reg (H) \leq \nu(C_n \cup F)$ to complete the proof. In order to achieve this inequality, we consider the following three cases.

\noindent
\textsc{Case 1:} Suppose $N_{G}[x] \cap V(C_n)=\emptyset$. In this case, we observe that
$$H= G \setminus N_{G} [x] =\widetilde{C_n} \cup (F \setminus N_{F} [x]).$$
Thus
$$
\begin{array}{lcll}
    \reg(H)&\leq &\nu(C_n \cup (F \setminus N_{F} [x])) +1 & \mbox{ (by the induction hypothesis)}  \\
	& = & \nu(C_n \cup F) &  \mbox{ (by Observation \ref{indmatch})}.
    \end{array}
$$

\noindent
\textsc{Case 2:} Suppose $N_{G}(x) \cap V(C_n)= \{x_i\}$ for some $1\leq i \leq n.$ 
Without loss of generality, we may assume that $x_i=x_1.$ In this case $x_1$ can not divide $M$ by our assumption as $x_1$ is a root of $F.$
This implies that $e_1=x_1x_2$ and $e_n=x_1x_n$ can not appear in $M.$

Let $P:= C_n \setminus x_1,$ namely $P$ is the path on the vertices $x_2, \ldots, x_n$ (in order). Notice that all the edges that appear in $M$ are edges in $P.$ Let  $\widetilde{P}$ be the graph  associated to $(I(P)^{s+1}:M)^{\pol}.$ 
%By \cite[Proposition 3.5]{AB}, $(I(P)^{s+1}:M)^{\pol}$ is a quadratic squarefree monomial ideal. 
Observe that if $x_i$ is even-connected to $x_1$ in $C_n$ with respect to $M,$ then $x_1x_i$ is not an edge in $H.$ It follows that $\widetilde{C_n} \setminus x_1 = \widetilde{P}$ and
\begin{eqnarray*}
H &=& (\widetilde{C_n} \cup F)   \setminus (\{x_1\} \cup N_{F}[x])\\
&=&  (\widetilde{C_n} \setminus x_1)   \cup  (F \setminus N_{F}[x])\\
& =&\widetilde{P} \cup (F  \setminus N_{F}[x]). 
\end{eqnarray*}

Therefore, we have 
$$ \begin{array}{lcll}
    \reg(H)&\leq &\nu(P  \cup (F  \setminus N_{F}[x])) +1 & \mbox{ (by Lemma \ref{EvenPath})}  \\
	& \leq & \nu(P\cup F) &  \mbox{ (by Observation \ref{indmatch})}\\
	& \leq & \nu(C_n\cup F) &  \mbox{ (since $P$ is an induced subgraph of $C_n$)}
    \end{array}
$$

\noindent
\textsc{Case 3:} Suppose $x=x_i$ for some $1\leq i \leq n.$ Without loss of generality, we may assume that $x_i=x_1.$  Let $P:= C_n \setminus N_{C_n} [x_1],$ namely $P$ is the path on the vertices $x_3, \ldots, x_{n-1}.$ Let $\{g_1, \ldots, g_t\}$ be the collection of edges of $P$ that appear in $M.$  Consider the graph associated to $(I(P)^{t+1}:M')^{\pol}$ 
where $M'=g_1\ldots g_t$ and denote this graph by $\widetilde{P}.$

Notice that 
\begin{eqnarray*}
H &=& (\widetilde{C_n} \cup F)   \setminus  N_{\widetilde{C_n}\cup F}[x_1])\\
&=&  (\widetilde{C_n} \setminus N_{\widetilde{C_n}}[x_1])   \cup  (F \setminus N_{F}[x_1]).
\end{eqnarray*}

It follows from Claim \ref{claim1} that $\widetilde{C_n} \setminus N_{\widetilde{C_n}}[x_1]$ is an induced subgraph of $\widetilde{P}.$ Thus $H$ is an induced subgraph of $\widetilde{P} \cup  (F \setminus N_{F}[x_1]).$
Therefore,
$$ \begin{array}{lcll}
  \reg(H)&\leq &  \reg(\widetilde{P}  \cup (F  \setminus N_{F}[x])) & \mbox{ (by Theorem \ref{Inequalities} )}  \\
    &\leq &\nu(P  \cup (F  \setminus N_{F}[x])) +1 & \mbox{ (by Lemma \ref{EvenPath})}  \\
	& \leq & \nu(P\cup F) &  \mbox{ (by Observation \ref{indmatch})}\\
	& \leq & \nu(C_n\cup F) &  \mbox{ (since $P$ is an induced subgraph of $C_n$)}.
    \end{array}
$$

Hence the lemma is proved.
\end{proof}

The following example shows that the equality can be achieved in Lemma \ref{EvenCycle1}.
\begin{example}
 Let $C_5 \cup F$ be the graph on $\{x_1,\ldots,x_5,y_1,\ldots,y_6\}$ as given in the figure below.
Let $M=x_3x_4$ and $\widetilde{C_5} \cup F$ be the graph associated to $(I(C_5 \cup F)^2:M)$.
 The even-connected edge is presented by the dotted line.
 
 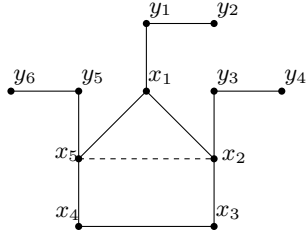
\begin{figure}[H] 
 \begin{tikzpicture}[scale=.9]
%\clip(-0.43,-0.59) rectangle (17.16,8.92);
\draw (5,4)-- (4,3);
\draw (4,2)-- (4,3);
\draw (5,4)-- (6,3);
\draw (6,3)-- (6,2);
\draw (4,2)-- (6,2);
\draw (5,4)-- (5,5);
\draw (6,5)-- (5,5);
\draw (6,3)-- (6,4);
\draw (7,4)-- (6,4);
\draw (4,4)-- (4,3);
\draw (4,4)-- (3,4);
\draw [dash pattern=on 2pt off 2pt] (4,3)-- (6,3);
\begin{scriptsize}
\fill [color=black] (5,4) circle (1.5pt);
\draw[color=black] (5.21,4.2) node {$x_1$};
\fill [color=black] (4,3) circle (1.5pt);
\draw[color=black] (3.83,3.07) node {$x_5$};
\fill [color=black] (4,2) circle (1.5pt);
\draw[color=black] (3.84,2.19) node {$x_4$};
\fill [color=black] (6,3) circle (1.5pt);
\draw[color=black] (6.3,3.04) node {$x_2$};
\fill [color=black] (6,2) circle (1.5pt);
\draw[color=black] (6.21,2.21) node {$x_3$};
\fill [color=black] (5,5) circle (1.5pt);
\draw[color=black] (5.21,5.2) node {$y_1$};
\fill [color=black] (6,5) circle (1.5pt);
\draw[color=black] (6.21,5.2) node {$y_2$};
\fill [color=black] (6,4) circle (1.5pt);
\draw[color=black] (6.21,4.2) node {$y_3$};
\fill [color=black] (7,4) circle (1.5pt);
\draw[color=black] (7.21,4.2) node {$y_4$};
\fill [color=black] (4,4) circle (1.5pt);
\draw[color=black] (4.21,4.2) node {$y_5$};
\fill [color=black] (3,4) circle (1.5pt);
\draw[color=black] (3.21,4.2) node {$y_6$};
\end{scriptsize}
\end{tikzpicture}
\caption{Graphs $C_5 \cup F$ and $\widetilde{C_5} \cup F$}
\end{figure}

It can be easily verified that $\nu(C_5 \cup F)=4.$ 
By \cite[Theorem 14]{Wood}, $\reg(I(\widetilde{C_5} \cup F))=5=\nu(C_5 \cup F)+1$.
 \end{example}

In the previous result, we focus on particular minimal monomial generators of $I(C_n)^s$ for some $s \geq 1.$ Our next result generalizes Lemma \ref{EvenCycle1} by considering any minimal monomial generator of $I(C_n)^s.$

\begin{lem}\label{EvenCycle2} Let $C_n$ be a cycle on the vertices $x_1, \ldots, x_n$  (in order) and $F$ be a forest attached to $C_n$ at some of its vertices such that $C_n \cup F$ is a unicyclic graph. Let  $E(F)=\{f_1, \ldots, f_k\}$ and $M$ be a minimal monomial generator of $I(C_n)^s$ for some $s \geq 1.$ Then
$$ \reg ((I(C_n)^{s+1}, f_1, \ldots , f_k):M) \leq \nu(C_n \cup F) +1.$$
\end{lem}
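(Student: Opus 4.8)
The plan is to reduce to Lemma~\ref{EvenCycle1} by separating the trees of $F$ according to whether their roots divide $M$. Since the colon of a sum of monomial ideals by a monomial distributes, with $\mathcal{J}:=(I(C_n)^{s+1}, f_1,\dots,f_k : M)$ I have
\[ \mathcal{J} \;=\; \big(I(C_n)^{s+1}:M\big) + \big(I(F):M\big). \]
I first read off $(I(F):M)$ edge by edge. Every $f_j\in E(F)$ has at most one endpoint on $C_n$, namely a root, and no tree vertex divides $M$; hence $(f_j:M)=f_j$ stays the original quadratic generator unless $f_j=\{x_{i_r},y_a\}$ is a root edge with $x_{i_r}\mid M$, in which case $(f_j:M)=(y_a)$ is a degree-one generator. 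Thus, writing $A$ for the set of roots dividing $M$, the ideal $(I(F):M)$ is generated by all edges of $F$ not incident to a vertex of $A$ together with the variables in $\bigcup_{x_{i_r}\in A}N_{T_r}(x_{i_r})$.

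Next I eliminate these linear generators. Setting each such $y_a$ equal to zero is an isomorphism of graded modules $R/\mathcal{J}\cong \bar R/\bar{\mathcal J}$, where $\bar R$ drops the variables $y_a$, so $\reg(\mathcal J)=\reg(\bar{\mathcal J})$; after polarizing, Corollary~\ref{betti} lets me pass to a graph. Deleting a neighbor $y_a$ of a root in $A$ erases $y_a$ together with all edges at $y_a$, so the subtrees hanging below it detach from the cycle, while the cycle part is untouched because only tree vertices are removed. Using Remark~\ref{obsEvenCn} to identify the cycle part $(I(C_n)^{s+1}:M)^{\pol}$ with the graph $\widetilde{C_n}$, I obtain that $\reg(\mathcal J)=\reg(I(\widetilde H))$ for
\[ \widetilde H \;=\; \big(\widetilde{C_n}\cup F_{\mathrm{att}}\big)\ \coprod\ \Big(\coprod_{x_{i_r}\in A}\big(T_r\setminus N_{T_r}[x_{i_r}]\big)\Big), \]
where $F_{\mathrm{att}}$ is the union of the trees whose roots are not in $A$.

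Finally I bound $\reg(I(\widetilde H))$. Since the roots of $F_{\mathrm{att}}$ do not divide $M$, Lemma~\ref{EvenCycle1} applies to $\widetilde{C_n}\cup F_{\mathrm{att}}$ and gives $\reg(\widetilde{C_n}\cup F_{\mathrm{att}})\le \nu(C_n\cup F_{\mathrm{att}})+1$. The detached part $D:=\coprod_{x_{i_r}\in A}(T_r\setminus N_{T_r}[x_{i_r}])$ is a forest, so $\reg(I(D))=\nu(D)+1$ by \cite[Theorem 2.18]{Zheng} (and if $D$ has no edges the claim is already Lemma~\ref{EvenCycle1}). Combining these through the disjoint-union formula \cite[Lemma 8]{Wood},
\[ \reg(I(\widetilde H)) \;=\; \reg(\widetilde{C_n}\cup F_{\mathrm{att}}) + \reg(I(D)) - 1 \;\le\; \nu(C_n\cup F_{\mathrm{att}}) + \nu(D) + 1. \]
Because $(C_n\cup F_{\mathrm{att}})\coprod D$ is exactly the induced subgraph of $C_n\cup F$ obtained by deleting $\bigcup_{x_{i_r}\in A}N_{T_r}(x_{i_r})$, its induced matching number $\nu(C_n\cup F_{\mathrm{att}})+\nu(D)$ is at most $\nu(C_n\cup F)$, and the lemma follows. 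The only real obstacle is the bookkeeping forced by the roots dividing $M$: they convert the attaching edges into linear forms and thereby split off subtrees, moving the problem outside the scope of Lemma~\ref{EvenCycle1}; the point that rescues the argument is that eliminating these linear variables realizes precisely an induced subgraph of $C_n\cup F$ whose attached component still satisfies the hypotheses of Lemma~\ref{EvenCycle1} and whose induced matching number is controlled by monotonicity under passing to induced subgraphs.
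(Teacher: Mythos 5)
Your proposal is correct and follows essentially the same route as the paper's own proof: you split $F$ into the trees whose roots divide $M$ (whose attaching edges become variables, detaching the subtrees beyond the roots' neighbors) and those whose roots do not divide $M$, handle the latter with Lemma~\ref{EvenCycle1}, and finish with Zheng's formula for forests, the disjoint-union regularity formula, and monotonicity of $\nu$ under induced subgraphs. The paper's $F_1$ and $F_2$ are exactly your $F_{\mathrm{att}}$ and $D$, and its treatment of the linear generators $y_1,\ldots,y_p$ as regularity-irrelevant isolated vertices is the same elimination step you perform.
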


\begin{proof} Let $J:= ((I(C_n)^{s+1}, f_1, \ldots , f_k):M).$ Since all the ideals being used here are monomial ideals, we can rewrite $J$ as follows.
$$J=  (I(C_n)^{s+1}:M)+( f_1:M)+\cdots + (f_k:M).$$

Let $G$ be the graph associated to $J^{\pol}.$ Our goal is to show that $\reg (G) \leq \nu(C_n \cup F) +1.$ 

Recall that $F$ is a collection of rooted trees with roots on the cycle $C_n.$ If $M$ is a minimal generator of $I(C_n)^s$ for some $s \geq 1$ such that none of the roots divide $M,$ then the statement holds from Lemma  
\ref{EvenCycle1} since $(f_i:M)=(f_i)$ for all $1\le i \le k.$ Suppose that there exists at least one root of $F,$ say $x,$ such that $x$ divides $M.$ For the sake of simplicity we use $x$ to denote a root which is essentially a vertex $x_j$ in $C_n$ for some $1\le j \le n.$

Let $F_1$ be the collection of rooted trees in $F$ such that none of its roots divide $M$ and $F_2$ be the collection of rooted trees of $F$ such that every root in $F_2$ divides $M.$ Note that $F$ is the disjoint union of its induced subgraphs $F_1$ and $F_2.$

Observe that $(f :M)=(f)$ for all $f \in E(F_1).$ The colon ideal $(f:M)$ behaves differently when $f \in E(F_2).$  If the edge $f \in E(F_2)$ is incident to a root $x,$ then there exists a vertex $y \in V(F_2)$ such that $f=xy$ and $(f:M)=(y).$ Let $N:=\{y_1, \ldots, y_p\}$ be the collection of all such vertices $y,$ i.e.,  for any $y \in N$ there exists a root $x$ such that $xy \in E(F_2).$ 

In the light of above observations, the ideal $I(G)=J^{\pol}$ takes the following form.
\begin{eqnarray*}
I(G) &=& I(\widetilde{C_n})+ (y_1,\ldots, y_p) + I(F_1)+I(F_2 \setminus N)\\
&=&  I(\widetilde{C_n} \cup F_1)+ (y_1,\ldots, y_p) +I(F_2 \setminus N).
\end{eqnarray*}

Note that $ \{y_1, \ldots, y_p\}$ are isolated vertices of $G$ and we can drop them without effecting the regularity by \cite[Remark 2.5]{BHT}. It follows from the construction of $F_1$ and $F_2 \setminus N$ that 
\begin{eqnarray}\label{eqn:cycle2}
\nu(C_n\cup F_1) +\nu(F_2\setminus N) \leq  \nu((C_n\cup F_1) \coprod (F_2\setminus N  ) ).
\end{eqnarray}

Therefore,
$$ \begin{array}{lcll}
    \reg(G) &=& \reg (\widetilde{C_n} \cup F_1)+ \reg( F_2\setminus N )-1& \mbox{ (by \cite[Lemma 8]{Wood})}  \\
	& \leq & \nu(C_n\cup F_1) +\nu(F_2\setminus N)+1&  \mbox{ (by Lemma \ref{EvenCycle1} and \cite[Theorem 2.18]{Zheng})}\\
	& \leq & \nu((C_n\cup F_1) \coprod (F_2\setminus N  ) )+1&  \mbox{ (by Equation (\ref{eqn:cycle2}) )}\\
	& \leq & \nu(C_n\cup F ) +1 &  \mbox{ (by $(C_n \cup F_1) \coprod (F_2\setminus N) = C_n \cup (F \setminus N)$)}
    \end{array}
$$

\end{proof}

We are now ready to prove the main result of this section.

\begin{thm}\label{cycle2}
Let $C_n$ be a cycle on the vertices $x_1, \ldots, x_n$  (in order) and $F$ be a forest attached to $C_n$ at some of its vertices with  $E(F)=\{f_1, \ldots, f_k\}$ such that $C_n \cup F$ is a unicyclic graph.  Then for $s \geq 1,$
$$ \reg (I(C_n)^{s+1}, f_1, \ldots , f_k) \leq 2s+ \nu(C_n \cup F) +1.$$
\end{thm}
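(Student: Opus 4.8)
The plan is to fix $I:=I(C_n)$, set $L:=(f_1,\ldots,f_k)$ and $\nu:=\nu(C_n\cup F)$, and prove the bound by induction on $s\ge 1$, peeling off the minimal monomial generators of $I^s$ one at a time through the short exact sequence \ref{sesreg}. Let $M_1,\ldots,M_r$ be the minimal monomial generators of $I^s$, each of degree $2s$, and introduce the ascending filtration
$$
Q_0=I^{s+1}+L\subseteq Q_1\subseteq\cdots\subseteq Q_r,\qquad Q_\ell:=(M_1,\ldots,M_\ell)+I^{s+1}+L .
$$
Since $I^{s+1}\subseteq I^s$ we have $Q_r=I^s+I^{s+1}+L=I^s+L$, while the quantity we must bound is $\reg(Q_0)=\reg\big(I(C_n)^{s+1},f_1,\ldots,f_k\big)$.

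For each $\ell$ the identity $Q_\ell=Q_{\ell-1}+(M_\ell)$ gives the short exact sequence
$$
0\longrightarrow \frac{R}{Q_{\ell-1}:M_\ell}(-2s)\xrightarrow{\ \cdot M_\ell\ }\frac{R}{Q_{\ell-1}}\longrightarrow\frac{R}{Q_\ell}\longrightarrow 0,
$$
so that \ref{sesreg} together with $\deg M_\ell=2s$ yields $\reg(Q_{\ell-1})\le\max\{\reg(Q_{\ell-1}:M_\ell)+2s,\ \reg(Q_\ell)\}$. Chaining these inequalities from $\ell=1$ up to $\ell=r$ collapses the filtration to
$$
\reg(Q_0)\le\max\Big(\big\{\reg(Q_{\ell-1}:M_\ell)+2s : 1\le\ell\le r\big\}\cup\{\reg(Q_r)\}\Big),
$$
so it remains to bound the two kinds of terms on the right.

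The term $\reg(Q_r)$ is the easy one. Here $Q_r=I^s+L=I(C_n)^{(s-1)+1}+L$, so for $s\ge 2$ the induction hypothesis gives $\reg(Q_r)\le 2(s-1)+\nu+1=2s+\nu-1$, whereas for $s=1$ we have $Q_r=I(C_n\cup F)$, the edge ideal of a unicyclic graph, and Theorem \ref{unicyclicreg} gives $\reg(Q_r)\le\nu+2$. In either case $\reg(Q_r)\le 2s+\nu+1$.

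The real work lies in the colon terms $\reg(Q_{\ell-1}:M_\ell)$. Decomposing,
$$
Q_{\ell-1}:M_\ell=\big((I^{s+1}+L):M_\ell\big)+\big((M_1,\ldots,M_{\ell-1}):M_\ell\big),
$$
where the first summand is exactly $(I(C_n)^{s+1},f_1,\ldots,f_k):M_\ell$, whose regularity is at most $\nu+1$ by Lemma \ref{EvenCycle2}. The genuine obstacle is the second summand, generated by the monomials $M_i/\gcd(M_i,M_\ell)$ for $i<\ell$: because $M_i$ and $M_\ell$ are \emph{distinct} minimal generators of $I^s$ these are nontrivial monomials, and one must show that adjoining them does not push the regularity past $\nu+1$. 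I expect this to be the technical heart, and I would handle it exactly in the spirit of Banerjee's iterated-colon reduction: using the even-connection description of $(I^{s+1}:M_\ell)$ from Theorem \ref{even_connec_equivalent} to argue that each extra generator is either redundant modulo the first summand or reduces to a single variable, whose effect after polarization (Corollary \ref{betti}) is that of an isolated or whisker vertex and hence harmless to the bound. Granting $\reg(Q_{\ell-1}:M_\ell)\le\nu+1$ for every $\ell$, each colon term contributes $\reg(Q_{\ell-1}:M_\ell)+2s\le(\nu+1)+2s=2s+\nu+1$, and combining with the bound on $\reg(Q_r)$ yields $\reg(Q_0)\le 2s+\nu+1$, closing the induction.
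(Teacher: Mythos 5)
Your proposal is essentially the paper's own proof: the same filtration of $(I(C_n)^{s+1},f_1,\ldots,f_k)$ by the minimal monomial generators of $I(C_n)^s$, the same chained short exact sequences yielding $\reg(Q_0)\le\max\{\reg(Q_{\ell-1}:M_\ell)+2s,\ \reg(Q_r)\}$, the same use of Lemma \ref{EvenCycle2} on the colon terms, and the same induction on $s$ with Theorem \ref{unicyclicreg} handling $s=1$. The one place you stop short --- bounding the contribution of $(M_1,\ldots,M_{\ell-1}):M_\ell$, which you only ``grant'' --- is exactly the step the paper closes by citation rather than by new work: by \cite[Theorem 4.12]{Banerjee} one has
$$(Q_{\ell-1}:M_\ell)=(I(C_n)^{s+1}:M_\ell)+(f_1:M_\ell)+\cdots+(f_k:M_\ell)+(\text{variables}),$$
that is, the generators coming from the earlier $M_i$'s contribute only variables modulo the first summand; then \cite[Lemma 2.10]{Banerjee} allows those variables to be discarded without increasing regularity, after which Lemma \ref{EvenCycle2} gives $\reg(Q_{\ell-1}:M_\ell)\le\nu(C_n\cup F)+1$, precisely as you predicted. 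So the ``technical heart'' you flag is a genuine hole in your write-up, but it is filled by the known result you yourself name (Banerjee's iterated-colon reduction), not by any idea missing from your outline; the only caution is that Banerjee's theorem is established for a suitable ordering of the minimal generators of $I^s$, which is harmless here because your filtration inequality is valid for any fixed ordering, so one simply adopts his.
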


\begin{proof} The proof is based on induction on $s.$ We first develop a machinery to use in our induction arguments.

Suppose  $\{m_1, \ldots, m_q \}$ be the minimal monomial  generators of $I(C_n)^{s}$ for $s\geq 1$ and the monomials $\{m_1, \ldots, m_q \}$ are ordered by using the ordering given in \cite[Discussion 4.1]{Banerjee}. Let $J:=(I(C_n)^{s+1},f_1, \ldots, f_k).$ We wish to prove that $\reg (J) \leq  2s+ \nu(C_n \cup F) +1.$

Consider the following short exact sequence:
\begin{eqnarray}\label{exact01}
0 \longrightarrow \frac{R}{ (J:m_1)}(-2s) \longrightarrow \frac{R}{J} \longrightarrow \frac{R}{(J,m_1)} \longrightarrow 0.
\end{eqnarray}

Let $J_l= (J,m_1, \ldots, m_l)$ where $1\leq l \leq q$ and set $J_0=J.$ Then, for $0 \leq l \leq q-1,$ we have
\begin{eqnarray}\label{exact02}
0 \longrightarrow \frac{R}{ (J_l:m_{l+1})}(-2s) \longrightarrow \frac{R}{J_l} \longrightarrow \frac{R}{(J_{l+1})} \longrightarrow 0 .
\end{eqnarray}

Combination of Equation \ref{exact01} and Equation \ref{exact02} yields to the inequality below.
\begin{eqnarray}\label{regularity'sinequality}
\reg(J) \leq \max\{ \reg(J_l:m_{l+1})+2s,~0\leq l \leq q-1,~\reg(I(C_n)^s,f_1, \ldots, f_k)\}.
\end{eqnarray}

Understanding the ideal $J_l:m_{l+1}$ is essential to establish our upper bound. Recall that all the ideals of interest are monomial ideals. Thus it follows from \cite[Theorem 4.12]{Banerjee} that
$$(J_l:m_{l+1})= ((I(C_n)^{s+1},f_1,\dots,f_k):m_{l+1}) + (\textrm{variables}) .$$

Then we obtain the following inequality to employ in Equation (\ref{regularity'sinequality}) for all $0\leq l \leq q-1$.

$$ \begin{array}{lcll}
 \reg(J_l:m_{l+1})   &\le &  \reg((I(C_n)^{s+1},f_1,\dots,f_k):m_{l+1}) & \mbox{ (by \cite[Remark 2.5]{BHT})}  \\
	& \leq &\nu(C_n\cup F)+1 &  \mbox{ (by Lemma \ref{EvenCycle2}) )}\\
	    \end{array}
$$

and Equation (\ref{regularity'sinequality}) yields to the following. 
\begin{eqnarray}\label{eqn:final}
\reg(J) \leq \max\{ \nu(C_n\cup F)+2s+1,~\reg(I(C_n)^s,f_1, \ldots, f_k)\}.
\end{eqnarray}

Our next step is to complete the proof by using induction on $s$ with the use of above inequality. Let $s=1.$ Then Equation (\ref{eqn:final}) is
$$\reg(J) \leq \max\{ \nu(C_n\cup F)+3,~\reg(I(C_n),f_1, \ldots, f_k)\}.
$$

It follows from Theorem \ref{unicyclicreg} that $\reg (I(C_n),f_1, \ldots, f_k)\leq \nu(C_n \cup F)+2.$ Hence  $\reg(J) \leq \nu(C_n \cup F)+3$ by Equation (\ref{eqn:final}) and the statement holds for $s=1.$

Suppose $s>1.$ Then we have $ \reg(I(C_n)^s,f_1, \ldots, f_k) \leq 2s + \nu(C_n \cup F)-1$ by the induction hypothesis.  Therefore, we get the desired inequality from Equation (\ref{eqn:final}) and this completes the proof.

%$$ \reg (I(C_n)^{s+1}, f_1, \ldots , f_k) \leq 2s+ \nu(C_n \cup F) +1.$$

\end{proof}

\section{Regularity of powers of unicyclic graphs} \label{reg_power}

In this section, we obtain precise expressions for the regularity of powers of edge ideals of unicyclic graphs. We first establish an upper bound for $\reg(I(G)^s)$  in terms of $\reg I(G)$ for all $s \geq 1$ and use this bound to compute regularity explicitly. Moreover, this upper bound proves that the (below) conjecture of  Alilooee, Banerjee, Kara and H\`a holds for unicyclic graphs. We also prove that the provided upper bound is the exact value for the regularity of powers for this class of graphs.

\begin{conjecture} {[Alilooee, Banerjee, Kara, H\`a]}
Let $G$ be a finite simple graph. Then for all $s \geq 1,$

$$\reg (I(G)^s) \leq 2s+\reg (I(G)) -2.$$
\end{conjecture}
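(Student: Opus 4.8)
The plan is to prove the statement by induction on $s$, reducing it to a single graph-theoretic inequality about even-connected graphs. The base case $s=1$ is the tautology $\reg(I(G)) \leq \reg(I(G))$, so I assume $s \geq 1$ together with the inductive hypothesis $\reg(I(G)^s) \leq 2s + \reg(I(G)) - 2$. Let $\{m_1, \ldots, m_q\}$ be the minimal monomial generators of $I(G)^s$, and set $J_0 = I(G)^{s+1}$ and $J_l = (I(G)^{s+1}, m_1, \ldots, m_l)$ for $1 \leq l \leq q$; since $I(G)^{s+1} \subseteq I(G)^s$, we have $J_q = I(G)^s$. Splicing together the short exact sequences of the form \ref{ses} for the elements $m_{l+1}$, exactly as in the proof of Theorem \ref{cycle2}, yields
$$\reg(I(G)^{s+1}) \leq \max\Big\{ \reg(J_l : m_{l+1}) + 2s,~ \reg(I(G)^s) \Big\}.$$
By the inductive hypothesis the term $\reg(I(G)^s)$ is at most $2s + \reg(I(G)) - 2 \leq 2(s+1) + \reg(I(G)) - 2$, so everything reduces to controlling the colon ideals $J_l : m_{l+1}$.

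By \cite[Theorem 4.12]{Banerjee}, each such colon ideal splits as
$$(J_l : m_{l+1}) = (I(G)^{s+1} : m_{l+1}) + (\textrm{variables}),$$
and adjoining variables (equivalently, passing to an induced subgraph) does not raise regularity by \cite[Lemma 2.10]{Banerjee}. Thus it suffices to bound $\reg(I(G)^{s+1} : m_{l+1})$. By Theorem \ref{even_connec_equivalent} this ideal is generated in degree two, and its polarization is the edge ideal of a graph $\widetilde{G}$ obtained from $G$ by adjoining the even-connected edges (and the whiskers coming from squared variables). Corollary \ref{betti} then gives $\reg(I(G)^{s+1} : m_{l+1}) = \reg(I(\widetilde{G}))$. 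Since $G$ is an induced subgraph of $\widetilde{G}$, Theorem \ref{Inequalities}(1) yields $\reg(I(G)) \leq \reg(I(\widetilde{G}))$ for free, so the whole conjecture rests on the reverse inequality
$$\reg(I(\widetilde{G})) \leq \reg(I(G)).$$
Granting this, the colon term is bounded by $\reg(I(G)) + 2s = 2(s+1) + \reg(I(G)) - 2$, which closes the induction.

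The hard part is precisely this last inequality, and it is where the generality of an arbitrary $G$ makes the problem genuinely open: regularity is not monotone under the addition of edges, so one cannot argue crudely that the larger graph $\widetilde{G}$ has larger regularity. The only leverage is the restricted nature of the new edges, since every edge of $\widetilde{G}$ not already in $G$ joins the two endpoints of an even-connection path through $G$ whose odd steps are edges dividing $m_{l+1}$. My strategy would be to attack $\reg(I(\widetilde{G}))$ through Woodroofe's co-chordal cover bound \cite{Wood}, constructing from a minimal co-chordal cover of $G$ an explicit cover of $\widetilde{G}$ of no larger size, so that the even-connected edges are absorbed into co-chordal pieces already present. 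A complementary line is an internal induction on the number of even-connected edges, peeling off one endpoint $x$ via Theorem \ref{Inequalities}(2) and verifying that both $\widetilde{G} \setminus x$ and $\widetilde{G} \setminus N_{\widetilde{G}}[x]$ remain controlled even-connected graphs over strictly smaller data.

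Finally, the path and cycle analyses of Section \ref{reg_path_cycle} (Lemmas \ref{EvenPath}, \ref{EvenCycle1} and \ref{EvenCycle2}) already furnish a proof of the key inequality when $G$ is a path or a cycle, and thereby a template: there one exploits that even-connections in a path or cycle cannot wrap around to create long induced matchings, allowing the regularity of $\widetilde{G}$ to be pinned below $\nu(\cdot)+1$. Extending this bookkeeping to arbitrary $G$ — in particular ruling out that \emph{chains} of even-connections in a dense graph inflate the regularity beyond $\reg(I(G))$ — is the decisive and still-missing ingredient, and I expect it to be the principal obstacle to a full proof of the conjecture as stated.
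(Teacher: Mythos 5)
Your proposal is not a proof, and you say as much yourself; the decisive point to make is that no proof is missing from the paper either, because the statement is stated there as an open \emph{conjecture}. The paper proves it only for unicyclic graphs (Lemma \ref{big}, resting on Theorem \ref{cycle2}), and your reduction is precisely the standard machinery behind that special case: induct on $s$, splice the exact sequences through $J_l=(I(G)^{s+1},m_1,\ldots,m_l)$ to get $\reg(I(G)^{s+1})\leq\max\{\reg(J_l:m_{l+1})+2s,\ \reg(I(G)^s)\}$, strip the variable generators via \cite[Theorem 4.12]{Banerjee} and \cite[Lemma 2.10]{Banerjee}, and use Theorem \ref{even_connec_equivalent} together with Corollary \ref{betti} to identify $(I(G)^{s+1}:m_{l+1})^{\pol}$ with the edge ideal of the even-connection graph $\widetilde{G}$. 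All of this is correct. But the inequality $\reg(I(\widetilde{G}))\leq\reg(I(G))$ on which you then rest everything is exactly the open content of the conjecture: it is known for bipartite graphs by \cite[Corollary 4.12 (2)]{JNS} (which covers your path case), and the paper's Remark \ref{obsEvenCn} and Lemmas \ref{EvenPath}, \ref{EvenCycle1}, \ref{EvenCycle2} establish ad hoc analogues for cycles with forests attached --- which is why the paper can conclude only the unicyclic case. Your two suggested attacks (co-chordal covers, peeling vertices of $\widetilde{G}$) are sketches of strategies, not arguments; neither is carried out, and the ``still-missing ingredient'' you name at the end is, for arbitrary $G$, the theorem itself. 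So the gap is genuine and, within this proposal, unfilled.

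Two smaller inaccuracies are worth flagging. First, $G$ is in general \emph{not} an induced subgraph of $\widetilde{G}$: the even-connected edges join vertices of $G$, so the subgraph of $\widetilde{G}$ induced on $V(G)$ strictly contains $G$ whenever a genuinely new even-connection exists, and Theorem \ref{Inequalities}(1) therefore does not give $\reg(I(G))\leq\reg(I(\widetilde{G}))$ ``for free'' (harmless here, since that direction is never used, but the cited theorem does not apply). Second, your claim that ``the whole conjecture rests on'' the inequality $\reg(I(\widetilde{G}))\leq\reg(I(G))$ overstates the logic: the spliced bound is only an upper estimate, so the colon bound is sufficient for the conjecture but not necessary, and a priori the conjecture could hold even if the colon inequality failed for some generator $M$. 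Indeed, the paper's own unicyclic argument does not bound the colon ideals by $\reg(I(G))$ directly but by $\nu(C_n\cup F)+1$ (Lemma \ref{EvenCycle2}), and then needs the additional careful leaf-peeling induction in the Claim inside Lemma \ref{big} to handle the forest part; your template reproduces the cycle core of that argument but not the extra bookkeeping that makes even the unicyclic case go through.
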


We shall use the below construction and notation for the rest of the chapter. 
Recall that a unicyclic graph  $G$ can be obtained from a cycle $C_n$ by attaching a forest to the cycle at some of its vertices. Let $F$ denote the forest attached to $C_n$ and $k:= |E(F)|.$ Note that the regularity of powers of cycles is studied in \cite{BHT}. We may assume that $k \geq 1.$
\begin{obs}\label{obs} Let $G$ be a unicyclic graph with cycle $C_n$ and a forest $F.$ We can order the edges of $F$ in such a way that deletion of the edges of $F$ with respect to that order results with an induced subgraph of $G$  at each step and that induced subgraph is also unicyclic.

Precisely, since $G$ is unicylic there exists a leaf in $G,$ say $f_1.$ Then  $G\setminus f_1$ is an induced subgraph of $G$ and a unicyclic graph. If $G\setminus f_1 \neq C_n,$ then there exists a leaf in $G \setminus f_1,$ say $f_2.$ Similarly, $(G\setminus f_1) \setminus f_2$ is  unicyclic and an induced subgraph of $G \setminus f_1$ and $G.$  Following this fashion we can order the edges of $F$ as $f_1, \ldots, f_k$ such that $f_i$ is a leaf in $G_{i-1} := G \setminus \{f_1, \ldots, f_{i-1}\}$ for $2\leq i \leq k$ and set $G_0=G,$ $G_k =C_n.$ Note that $G_i$ is unicyclic and an induced subgraph of $G_{i-1}$ and $G.$

If $f_1$ is a leaf in $G,$ we can easily observe that 
$$ I(G)^s =I(G_1)^s + \sum_{j=1}^{s}I(G_1)^{s-j} f_1^j.$$

Therefore, we get the following equalities for each $1 \leq i \leq k$
\begin{eqnarray}\label{eqleaf}
 (I(G)^s,f_1,\ldots, f_i) = (I(G_i)^s, f_1, \ldots, f_i).
 \end{eqnarray}

Note that $(I(G)^s,f_1,\ldots, f_k) =(I(C_n)^s,f_1, \ldots, f_k).$
\end{obs}
Our first result of the section introduces an upper bound for the regularity of powers for unicyclic graphs.

\begin{lem}\label{big}
If $G$ is a unicyclic graph, then for all $s \geq 1$,
$$\reg(I(G)^s) \leq 2s+\reg(I(G))-2.$$
\end{lem}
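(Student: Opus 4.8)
The plan is to induct on $k := |E(F)|$, the number of edges in the forest attached to the cycle $C_n$, using the ordering of edges $f_1, \ldots, f_k$ established in the preceding Observation, where each $f_i$ is a leaf in $G_{i-1} = G \setminus \{f_1, \ldots, f_{i-1}\}$ and each $G_i$ is unicyclic. The base case is $k=0$, i.e. $G = C_n$; here the claimed bound is exactly the content of \cite[Theorem 5.2]{BHT}, which gives $\reg(I(C_n)^s) \leq 2s + \reg(I(C_n)) - 2$ for all $s \geq 1$ (since $\reg(I(C_n)) = \nu(C_n)+1$ when $n \not\equiv 2 \pmod 3$ and $\nu(C_n)+2$ when $n \equiv 2 \pmod 3$, matching the cycle computations already invoked). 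So I may assume $k \geq 1$ and that the statement holds for every unicyclic graph with strictly fewer forest edges, in particular for $G_1 = G \setminus f_1$.

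First I would fix a leaf $f_1 = \{x,y\}$ of $G$ with $y$ the leaf vertex, and peel it off using the short exact sequence / colon technique. The natural move is to filter $I(G)^s$ by the powers of the leaf variable, exactly as in the displayed identity $I(G)^s = I(G_1)^s + \sum_{j=1}^s I(G_1)^{s-j} f_1^j$, and to run the short exact sequence argument from Equation \ref{sesreg} together with the leaf-deletion identities \ref{eqleaf}. Concretely, I expect to reduce $\reg(I(G)^s)$ to a maximum involving two kinds of terms: regularity of powers of the smaller unicyclic graph $G_1$ (handled by the induction hypothesis, giving $\leq 2s + \reg(I(G_1)) - 2 \leq 2s + \reg(I(G)) - 2$ since $G_1$ is an induced subgraph and regularity is monotone by Theorem \ref{Inequalities}(1)), and regularity of ideals of the form $(I(G)^{s+1} : M)$ or $(I(C_n)^{s+1}, f_1, \ldots, f_k : M)$ arising from colons against minimal generators $M$ of $I(G)^s$. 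The second type is precisely where Section \ref{reg_path_cycle} is designed to help.

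The key step — and the place where Theorem \ref{cycle2} does the heavy lifting — is controlling the colon ideals. Using \cite[Theorem 4.12]{Banerjee}, a colon $(I(G)^{s+1} : M)$ against a minimal generator decomposes, after stripping variables, into an even-connection ideal supported on the cycle plus the forest edges, which is exactly the object $((I(C_n)^{s+1}, f_1, \ldots, f_k) : M)$ bounded in Lemma \ref{EvenCycle2} by $\nu(C_n \cup F) + 1$. Feeding this into the telescoping short exact sequences (as in the proof of Theorem \ref{cycle2}) should yield $\reg(I(C_n)^{s+1}, f_1, \ldots, f_k) \leq 2s + \nu(C_n \cup F) + 1$, and then comparing $\nu(C_n \cup F) = \nu(G)$ against $\reg(I(G))$ via Theorem \ref{unicyclicreg} (which gives $\reg(I(G)) \geq \nu(G)+1$) converts the induced-matching bound into the desired $\reg(I(G)) - 2$ bound. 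I anticipate that the main obstacle is the bookkeeping in the inductive short exact sequence: I must verify that at each splitting the "new generator" terms $(J_l : m_{l+1})$ and the terminal term both stay within $2s + \reg(I(G)) - 2$, carefully tracking the shift $(-2s)$ from the degree-$2s$ generators $M$ and making sure the edge-deletion identities \ref{eqleaf} let me replace powers of $G$ by powers of the genuinely smaller unicyclic $G_i$ at each stage. The delicate point is ensuring the even-connection analysis of Lemma \ref{EvenCycle2} applies to an arbitrary minimal generator $M$ (not just one supported on the cycle), which is why the decomposition into cycle-supported and forest-supported pieces, together with the hypothesis that controls whether roots of $F$ divide $M$, must be handled case by case.
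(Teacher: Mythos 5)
Your proposal has two genuine gaps, both stemming from a mis-identification of what the short exact sequences actually produce. First, your outer induction is on $k=|E(F)|$, but when you colon $I(G)^s$ by the leaf edge $f_1$, Morey's lemma gives $(I(G)^s : f_1)=I(G)^{s-1}$ --- a lower power of the \emph{same} graph $G$, not anything involving $G_1$ --- so an induction on $k$ alone cannot absorb this term; one needs induction on $s$ (which is in fact the paper's outer induction). Meanwhile the quotient term is the mixed ideal $(I(G)^s,f_1)=(I(G_1)^s,f_1)$, not the pure power $I(G_1)^s$, and regularity is not monotone under adding a generator, so your induction hypothesis on $k$ does not apply to it either. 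Closing this requires a strengthened statement covering all the mixed ideals $(I(G_i)^s,f_1,\ldots,f_i)$ for $1\leq i\leq k$; this is exactly the Claim inside the paper's proof, proved by its own induction on $s$, where every colon in the telescoping filtration is taken against a leaf edge $f_{l+1}$ (handled by Morey's lemma, which again lowers the power and feeds back into the induction on $s$), never against minimal generators of powers.

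Second, and more seriously, your plan leans on bounding colons of the form $(I(G)^{s+1}:M)$ for \emph{arbitrary} minimal generators $M$ of $I(G)^s$, and you point to Lemma \ref{EvenCycle2} and Theorem \ref{cycle2} for this. But Lemma \ref{EvenCycle2} requires $M$ to be a minimal generator of $I(C_n)^s$, i.e.\ supported on the cycle; when $M$ contains forest edges, even-connections can run through the forest and no result in Section \ref{reg_path_cycle} controls the resulting graph. You flag this yourself as ``the delicate point'' to be ``handled case by case,'' but that case analysis is precisely the missing hard content --- and the paper's architecture is designed specifically to avoid it: by peeling off forest edges first via Morey-type colons, the even-connection machinery is only ever applied to the terminal ideal $(I(C_n)^s,f_1,\ldots,f_k)$, whose degree-$2s$ generators are all cycle-supported, which is exactly the setting of Theorem \ref{cycle2}. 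Your base case ($k=0$, cycles via \cite{BHT}), the use of the identities \ref{eqleaf}, and the conversion of $\nu$-bounds to $\reg$-bounds via Theorem \ref{unicyclicreg} are all fine, but without the mixed-ideal claim and its induction on $s$, and with the unproved arbitrary-$M$ colon bound, the argument does not go through as proposed.
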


\begin{proof}
 The statement is clear for $s=1.$ Assume that $s \geq 2.$
We consider the following short exact sequence:
\begin{eqnarray*}
0 \longrightarrow \frac{R} {(I(G)^s:f_1)}(-2) \longrightarrow \frac{R}{I(G)^s} \longrightarrow \frac{R}{(I(G)^s,f_1)} \longrightarrow 0.
\end{eqnarray*}
Since $f_1$ is a leaf of $G$, by \cite[Lemma 2.10]{Morey}, $(I(G)^s:f_1)=I(G)^{s-1}$. 
By making use of Equation \ref{eqleaf}, the short exact sequence yields to  the following inequality
\begin{eqnarray*}
\reg(I(G)^s) \leq \max \{\reg(I(G)^{s-1})+2,~\reg((I(G_1)^s,f_1)) \}.
\end{eqnarray*}
We have $\reg (I(G))^{s-1} +2 \leq 2s +\reg(I(G)) -2$ by the induction hypothesis. 
Thus it remains to show that
$\reg(I(G_1)^s,f_1)) \leq 2s + \reg(I(G))-2.$  This follows from the following more general claim:
\vskip 1mm
\noindent
\textbf{Claim: }For each $ 1 \le i \le k,$ denote the induced subgraph of $G$ whose edge set is $\{f_1, \ldots, f_i\}$ by $F_i$ and $G_i = G \setminus F_i.$ Let $F_i'$ be any induced subgraph of $F_i$ such that $G_i \cup F_i'$ is an induced subgraph of $G.$ Then for all $s\ge 1,$
$$ \reg (I(G_i)^s+I(F_i')) \le 2s+\reg I(G) -2.$$
\vskip 1mm
\noindent
\textit{Proof of the claim:} We prove the claim by using induction on $s$. 
If $s=1$, the statement holds as $\reg (I(G_i \cup F_i')) \leq \reg(I(G))$ by Theorem \ref{Inequalities} (1).
 Suppose $s>1.$ Consider the following exact sequence:
 %In this case, we employ the following short exact sequence to bound  $\reg (I(G_i)^s+I(F_i')) .$
%Suppose that for each $1\le i \le k,$ we have $\reg (I(G_i)^{s-1} +I(F_i')) \le 2(s-1)+\reg I(G)-2.$ 
\begin{eqnarray*}
0 \longrightarrow \frac{R} {((I(G_i)^s+I(F_i')):f_{i+1}}(-2) \longrightarrow \frac{R}{I(G_i)^s+I(F_i')} \longrightarrow \frac{R}{I(G_i)^s+I(F_i') +(f_{i+1})} \longrightarrow 0.
\end{eqnarray*}
Recall from Observation \ref{obs} that $f_{i+1}$ is a leaf in $G_i$ for each $1\le i \le k-1.$ Thus 
$$(I(G_i)^s+I(F_i')):f_{i+1} =(I(G_i)^s:f_{i+1}) +  (I(F_i'):f_{i+1})= I(G_i)^{s-1} + I(F_i'')+(\text{variables})$$
where $F_i''$ is the graph whose edge ideal is $ I(F_i''):= (I(F_i'):f_{i+1}).$ 
Note that $ I(F_i'')$ is either $I(F_i')$ or $ I(F_i''\setminus N[f_{i+1}]).$  
It follows that $F_i''$ is an induced subgraph of $F_i$ and $G_i \cup F_i''$ is an induced subgraph of $G.$ 
Furthermore, we have
$$ I(G_i)^s +I(F_i')+ (f_{i+1})= I(G_{i+1})^s+ I(F_{i+1}'),$$
where $F_{i+1}'$ is an induced subgraph of $F_{i+1}$ with the edge set $E(F_i')\cup \{f_{i+1}\}.$  
It can be easily verified that $F_{i+1}'$ is an induced subgraph of $F_{i+1}$ 
by making use of the condition $G_i \cup F_i'$ is an induced subgraph of $G.$ 
Since $G_{i+1}\cup F_{i+1}'= G_i \cup F_i',$ it must be an induced subgraph of $G$ for $1\le i \le k-1.$
It follows that, for each $1 \leq i \le k-1,$ we have
\begin{eqnarray*}
\reg(I(G_i)^s +I(F_i')) \leq \max \{\reg(I(G_i)^{s-1}+I(F_i''))+2,~\reg(I(G_{i+1})^s+ I(F_{i+1}')) \}.
\end{eqnarray*}
Combining the above inequalities for each $1 \leq i \le k-1$  yields to the following:
\begin{eqnarray}\label{last}
\reg(I(G_i)^s +I(F_i')) \leq \max_{ i \le q \le k-1} \{\reg(I(G_q)^{s-1}+I(F_q''))+2 ,
~\reg(I(G_k)^s+ I(F_{k}')) \},
\end{eqnarray}
where $F_q''$ is an induced subgraph of $F_q$ such that $G_q \cup F_q''$ is an induced subgraph of $G.$
It follows from the induction hypothesis that
$$\reg(I(G_q)^{s-1} +I(F_q'))+2  \le 2s+\reg (I(G))-2. $$
Thus it remains to show that
$$\reg(I(G_k)^s+ I(F_{k}') \leq 2s+\reg (I(G))-2.$$

Note that $G_k=C_n$ and $G = C_n \cup F_k.$ If $C_n\cup F_k'$ is connected, then, by Theorem \ref{cycle2},
and \cite[Lemma 2.2]{Katzman}, we have
\[
 \reg (I(C_n)^{s}+ I(F_k'))  \le   2s+ \nu(C_n \cup F_k') -1 \leq 2s+ \nu(C_n \cup F_k) -1 \leq 2s +\reg(I(G))-2.
\]
%$$ \begin{array}{lcll}
% \reg (I(C_n)^{s}+ I(F_k')) & \le &  2s+ \nu(C_n \cup F_k') -1  & \mbox{ (by Theorem \ref{cycle2})} \\
% &\le &2s+ \nu(C_n \cup F_k) -1  & \mbox{ (since $C_n\cup F_k'$ is an induced subgraph of $G$)}  \\
%&\le & 2s +\reg(I(G))-2 & \mbox{ ( by \cite[Lemma 2.2]{Katzman})}.
%\end{array}$$

 If $C_n\cup F_k'$ is not connected, let $C_n\cup F_k':=(C_n \cup \dot{F}) \coprod \ddot{F}$ where $C_n \cup \dot{F}$ is connected and $\ddot{F}:= F_k' \setminus \dot{F}.$  Then,
%Suppose $C_n\cup F_k'$ is not connected. Let $\dot{F}$ be the collection of trees of $F_k'$ with roots on $C_n$ and $\ddot{F} := F_k' \setminus \dot{F}$. Since $C_n \cup \dot{F}$ is connected and $\ddot{F}$ is disjoint, we get
$$ \begin{array}{lcll}
 \reg (I(C_n)^{s}+ I(F_k')) &=&  \reg (I(C_n)^{s}+ I(\dot{F})+I(\ddot{F}))  &\\
 &=& \reg (I(C_n)^{s}+ I(\dot{F}))+\reg (I(\ddot{F}))-1  & \mbox{ (by \cite[Lemma 8]{Wood})}  \\
 &=&  \reg (I(C_n)^{s}+ I(\dot{F})) +\nu(\ddot{F}) & \mbox{ (by \cite[Theorem 2.18]{Zheng})} \\
 &\le& 2s +\nu(C_n\cup \dot{F}) -1 +\nu(\ddot{F})&  \mbox{ (by Theorem \ref{cycle2}})\\
 &=& 2s +\nu(C_n \cup F_k')-1 &  \mbox{ ( since  $F_k'= \dot{F} \ \coprod \ddot{F})$}\\
 &\le & 2s +\reg(I(G))-2. & \mbox{ (by \cite[Lemma 2.2]{Katzman}})
\end{array}$$

%If $C_n \cup \dot{F} =C_n,$ the desired inequality is obtained similarly by using \cite[Theorem 5.2]{BHT}.
%The last inequality follows from the previous set of inequalities. 

%Suppose $C_n\cup F_k'=C_n \coprod \dot{F}$, where $\dot{F}$ is forest. It follows from 
%\cite[Corollary 3.2]{Herzog2007}, \cite[Theorem 2.18]{Zheng} and \cite[Theorem 4.5 and Theorem 4.7]{BHT}, that
%\begin{align*}
% \reg(I(C_n)^s+I(\dot{F}))& \leq \reg(I(C_n)^s)+\reg(I(\dot{F}))-1 = 2s+\nu(C_n)-1+\nu(\dot{F})\\
% & \leq 2s+\nu(C_n \cup F_k)-1
% \leq 2s+\reg(I(G))-2.
%\end{align*}
%By (\ref{last}),
%$ \reg(I(G_i)^s +I(F_i')) \le 2s +\reg(I(G))-2$ for each $ 1 \le i \le k$ and the claim holds.
\end{proof}

Our main result of the paper shows that regularity is equal to the upper bound given in Lemma \ref{big}.

\begin{thm}\label{main} If $G$ is a unicyclic graph, then for all $s \geq 1$,
$$\reg(I(G)^s)=2s+\reg(I(G))-2.$$
\end{thm}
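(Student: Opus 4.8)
The plan is to prove the theorem by sandwiching $\reg(I(G)^s)$ between a known lower bound and the upper bound established in Lemma \ref{big}. By \cite[Theorem 4.5]{BHT} we have the universal lower bound $\reg(I(G)^s) \geq 2s + \nu(G) - 1$ for every graph and all $s \geq 1$, while Lemma \ref{big} gives $\reg(I(G)^s) \leq 2s + \reg(I(G)) - 2$. Since Theorem \ref{unicyclicreg} tells us that for a unicyclic graph $\reg(I(G))$ is either $\nu(G)+1$ or $\nu(G)+2$, these two inequalities already pin down the answer in the first case: when $\reg(I(G)) = \nu(G)+1$, the lower bound $2s+\nu(G)-1$ and the upper bound $2s+\reg(I(G))-2 = 2s+\nu(G)-1$ coincide, so equality holds immediately for all $s \geq 1$.

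The remaining and genuinely substantive case is $\reg(I(G)) = \nu(G)+2$, where the lower bound $2s+\nu(G)-1$ falls two short of the upper bound $2s+\nu(G)$. Here I would raise the lower bound by exhibiting a suitable induced subgraph $H$ of $G$ whose powers have regularity exactly $2s+\nu(G)$, and then invoke the monotonicity of regularity of powers under passing to induced subgraphs, namely \cite[Corollary 4.3]{BHT}, to conclude $\reg(I(G)^s) \geq \reg(I(H)^s) = 2s+\nu(G)$. By Corollary \ref{unireg1}, this case occurs precisely when $n \equiv 2 \pmod 3$ and $\nu(G\setminus\Gamma(G)) = \nu(G)$. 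Using the notation of \S\ref{notation}, the natural candidate is $H = G \setminus \Gamma(G) = C_n \coprod (\coprod_{j=1}^m H_j)$, whose cycle has length $n \equiv 2 \pmod 3$ and whose forest part carries the full induced matching number.

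For this candidate $H$, I would compute $\reg(I(H)^s)$ by combining the known regularity of powers of the cycle $C_n$ from \cite{BHT}, namely $\reg(I(C_n)^s) = 2s + \nu(C_n)$ when $n \equiv 2 \pmod 3$ and $s \geq 2$ (together with the $s=1$ value from Theorem \ref{unicyclicreg}), with the regularity of powers of the forest part from the forest case already handled via Lemma \ref{big} and the lower bound. The regularity of powers of a disjoint union is governed by a formula for $\reg$ of a sum of ideals in disjoint variable sets (analogous to \cite[Lemma 8]{Wood} and the tensor-product behavior of regularity), which should give $\reg(I(H)^s) = \reg(I(C_n)^s) + \reg(I(\coprod_j H_j)^s) - 1 = 2s + \nu(C_n) + \bigl(\text{linear in }s\bigr)$, and using $\nu(H) = \nu(G)$ this assembles to exactly $2s + \nu(G)$.

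The main obstacle I anticipate is the careful treatment of the disjoint-union computation for powers: the clean additivity in \cite[Lemma 8]{Wood} is stated for edge ideals themselves, so I would need either a powers-version of that splitting or a direct induction distributing $I(H)^s$ over the disjoint components $C_n$ and the $H_j$'s. A second delicate point is handling the low-degree edge cases, in particular guaranteeing that the cycle contributes $2s+\nu(C_n)$ uniformly for all $s \geq 1$ (the $s=1$ value needs to match the $n\equiv 2\pmod 3$ regularity $\nu(C_n)+2$), and ensuring the forest components do not inflate the count beyond $\nu(G)$. Once the equality $\reg(I(H)^s) = 2s+\nu(G)$ is secured, the theorem follows at once by squeezing between this sharpened lower bound and the upper bound of Lemma \ref{big}.
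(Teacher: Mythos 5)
Your overall architecture is exactly the paper's: sandwich $\reg(I(G)^s)$ between the lower bound of \cite[Theorem 4.5]{BHT} and the upper bound of Lemma \ref{big}, dispose of the case $\reg(I(G))=\nu(G)+1$ immediately, and in the case $\reg(I(G))=\nu(G)+2$ raise the lower bound via the induced subgraph $H=G\setminus\Gamma(G)=C_n\coprod(\coprod_j H_j)$ together with \cite[Corollary 4.3]{BHT}. The gap is in the one step you flagged as an ``anticipated obstacle,'' and it is fatal as written: the formula you propose,
$$\reg(I(H)^s)=\reg(I(C_n)^s)+\reg\bigl(I(\textstyle\coprod_j H_j)^s\bigr)-1,$$
is false for $s\geq 2$. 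The reason is that for ideals $I,J$ in disjoint sets of variables one has $(I+J)^s=\sum_{i+j=s}I^iJ^j$, not $I^s+J^s$, so the regularity of $(I+J)^s$ is governed by the \emph{mixed} products; the correct statement (for monomial ideals, this is the content of the results the paper invokes) is a maximum over $i+j=s+1$, $i,j\geq 1$, of $\reg(I^i)+\reg(J^j)-1$, never the sum of the two top powers. Indeed, plugging your own inputs $\reg(I(C_n)^s)=2s+\nu(C_n)$ and $\reg(I(\coprod_j H_j)^s)=2s+\nu(\coprod_j H_j)-1$ into your formula gives $4s+\nu(H)-2$, not the claimed $2s+\nu(G)$; for $s\geq 2$ this exceeds the upper bound $2s+\nu(G)$ coming from Lemma \ref{big} and \cite[Corollary 4.3]{BHT}, so your formula is inconsistent with the rest of your own argument, and the sentence ``this assembles to exactly $2s+\nu(G)$'' does not follow from what precedes it.

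The step can be repaired, but it needs the right tool rather than a ``powers version'' of \cite[Lemma 8]{Wood}. With the max formula, every admissible pair $(i,j)$ with $i+j=s+1$ contributes exactly $2s+\nu(H)$: for $i\geq 2$ the cycle contributes $\reg(I(C_n)^i)=2i+\nu(C_n)$, for $i=1$ it contributes $\nu(C_n)+2$, and the forest contributes $2j+\nu(\coprod_j H_j)-1$ by \cite[Theorem 4.7]{BHT}; in each case the total is $2s+\nu(H)$, which is the claim you need. This is precisely how the paper proceeds: it proves $\reg(I(H)^s)=2s+\nu(H)$ by induction on the number of tree components, handling $s=1$ by \cite[Lemma 8]{Wood}, $s=2$ by \cite[Proposition 2.7 (ii)]{HTT} (note that this identity pairs the \emph{first} power of the cycle with the \emph{second} power of the tree --- exactly the mixed-term phenomenon your formula misses), and the remaining powers by \cite[Theorem 5.7]{nguyen_vu}. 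Until you replace your disjoint-union formula with an argument of this kind, the case $\reg(I(G))=\nu(G)+2$ remains unproved.
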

\begin{proof} If  $\reg(I(G))=\nu(G)+1$, then 
by Lemma \ref{big} and \cite[Theorem 4.5]{BHT} for all $s \geq 1$
$$\reg(I(G)^s)=2s+\nu(G)-1=2s+\reg(I(G))-2.$$
Let $H=G\setminus \Gamma(G)=C_n \coprod (\coprod_{i=1}^t H_i)$
where $n \equiv 2 (mod~3)$.
 Note that $\nu(H)=\nu(C_n)+\nu(H_1)+\ldots+\nu(H_t)$.
 \vskip 1mm
 \noindent
 \textbf{Claim: }$\reg(I(H)^s)=2s+\nu(H)$ for all $s \geq 1$.
 \vskip 1mm
 \noindent
 \textit{Proof of the claim:} It follows from \cite[Theorem 4.7, Theorem 5.2]{BHT} and \cite[Theorem 5.7]{nguyen_vu} that for $s \geq 3$ we have 
$$\reg(I(H)^s)=2s+\nu(H).$$ 

The case $s=1$ is proved by using \cite[Lemma 8]{Wood} and the remaining case  $s=2$ follows from \cite[Proposition 2.7 (ii)]{HTT}. Thus the claim is proved.

If $\reg(I(G))=\nu(G)+2$, then by Corollary \ref{unireg1}, $\nu(H)=\nu(G)$. Hence $\reg(I(H)^s)=2s+\nu(G)$.
Therefore it follows from \cite[Corollary 4.3]{BHT} and Lemma \ref{big} that for all $s \geq 1$,
$$\reg(I(G)^s)=2s+\nu(G)=2s+\reg(I(G))-2.$$
\end{proof}

\begin{remark}\label{main_remark}
 The equality given in Theorem \ref{main} is not true when $G$ is a bicyclic graph.
 For example, if $$I=(x_1x_2,x_2x_3,x_3x_4,x_4x_5,x_1x_5,x_1x_6,x_6x_7,x_6x_8,x_8x_9,x_9x_{10},
 x_{10}x_{11},x_{11}x_{12},x_{12}x_8),$$
 then computation in Macaulay2 \cite{M2} shows that the $\reg(I)=5$, $\reg(I^2)=6$, $\reg(I^3)=8$,
 $\reg(I^4)=10$ and $\reg(I^5)=12$.
\end{remark}

Since whiskered cycle graphs are unicyclic graphs, we derive the main results of  \cite{MSY}  from Corollary \ref{whisker_remark} and Theorem \ref{main}.

\begin{cor}\cite[Theorem 2.5]{MSY} Let $G = W(C_n)$ be a whiskered cycle graph. Then for all
$s \geq 1$,
$$\reg(I(G)^s)=2s+\nu(G)-1.$$
\end{cor}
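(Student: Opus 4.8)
The plan is to derive this corollary by combining two results already established in the paper, since the whiskered cycle $W(C_n)$ lies squarely in the class of graphs to which they apply. First I would observe that $G = W(C_n)$ is a unicyclic graph: its only cycle is $C_n$ itself, because each edge added in forming $W(C_n)$ joins a cycle vertex to a new vertex of degree one, i.e.\ is a whisker, and whiskers create no additional cycles. Moreover $G$ is connected and, having vertices of degree one, is not itself a cycle, so it lies in the class of connected unicyclic graphs (not cycles) considered throughout Section \ref{reg_power}, and Theorem \ref{main} applies to it.

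Next I would compute $\reg(I(G))$ using Corollary \ref{whisker_remark}, which gives $\reg(I(G)) = \nu(G) + 1$ for $G = W(C_n)$ with $n \geq 3$. With this value in hand, I would invoke Theorem \ref{main}: since $G$ is unicyclic, for all $s \geq 1$ we have $\reg(I(G)^s) = 2s + \reg(I(G)) - 2$. Substituting $\reg(I(G)) = \nu(G)+1$ then yields $\reg(I(G)^s) = 2s + (\nu(G)+1) - 2 = 2s + \nu(G) - 1$, as claimed, uniformly for all $s \geq 1$.

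The heavy lifting has already been carried out in the two cited results: Corollary \ref{whisker_remark} settles the regularity of the edge ideal itself (which rests on the combinatorial characterization of Section \ref{uni_reg}), while Theorem \ref{main} controls the behavior of regularity under taking powers for \emph{all} unicyclic graphs (which rests on the colon-ideal bounds of Section \ref{reg_path_cycle}). Consequently there is no genuine obstacle remaining here; the only point requiring care is to confirm that $W(C_n)$ satisfies the hypotheses of both cited statements, namely that it is a connected unicyclic graph that is not a cycle, and this verification is immediate from the definition of the whiskered graph.
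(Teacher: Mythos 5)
Your proposal is correct and follows essentially the same route as the paper, which likewise derives this corollary by combining Corollary \ref{whisker_remark} (giving $\reg(I(G))=\nu(G)+1$ for $G=W(C_n)$) with Theorem \ref{main} and substituting. Your extra verification that $W(C_n)$ is connected, unicyclic, and not a cycle is a sound (if routine) addition, since the paper restricts its attention to exactly that class.
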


\begin{remark} \label{re:dis_unicycle}
Our main focus in this paper is on regularity of powers of connected unicyclic graphs. However, one can extend the results to disconnected
unicyclic graphs and provide a precise expression for $\reg(I(G)^s)$ when $G$ is a disconnected unicyclic graph.

Suppose $G=G_1 \coprod (\coprod_{i=2}^t G_i)$ where $G_1$ is a connected unicyclic graph and $G_2,\ldots,G_t$
are trees. By Theorem \ref{main} and \cite[Theorem 4.7]{BHT}, we have
\begin{enumerate}
 \item $\reg(I(G_1)^s)=2s+\reg(I(G_1))-2$ for all $s \geq 1$.
 \item $\reg(I(\coprod_{i=2}^t G_i)^s)=2s+\nu(\coprod_{i=2}^t G_i)-1=2s+\reg(I(\coprod_{i=2}^t G_i))-2$ for all $s \geq 1$.
\end{enumerate}
By \cite[Theorem 5.7]{nguyen_vu}, we obtain $\reg(I(G)^s)=2s+\reg(I(G))-2$ for all $s \geq 2$.
\end{remark}

\vskip 2mm \noindent
\textbf{Acknowledgement:}
We would like to express our gratitude and appreciation to T\`ai Huy H\`a,  A. V. Jayanthan, and Arindam Banerjee for many useful suggestions related to this paper. Authors are deeply grateful to the referee for their useful comments and suggestions which improved the manuscript in many ways.
We heavily used commutative algebra package, Macaulay2 \cite{M2}, for verifying our results.
The third author is funded by National Board for Higher Mathematics, India.

%\nocite*{}
\bibliographystyle{abbrv}  %% or
\bibliography{ref_unicyclic}

\end{document}